\newtheorem{remark}[theorem]{Remark}
\newtheorem{example}[theorem]{Example}
\begin{document}

\bibliographystyle{plain}
\title{
Graph structure of commuting functions}

\author{
Peteris\ Daugulis\thanks{Department of Mathematics, Daugavpils
University, Daugavpils, LV-5400, Latvia (peteris.daugulis@du.lv).
} }

\pagestyle{myheadings} \markboth{P.\ Daugulis}{Graph structure of
commuting functions} \maketitle

\begin{abstract} The problem of finding graph structure of functions commuting with
a given function in terms of their functional graphs is
considered. Structure of functional graphs of commuting functions
is described. The problem is reduced to describing graph
homomorphisms of weakly connected components of functional graphs.
Four subcases with finite sets are considered: permutations
commuting with permutation, permutations commuting with a
function, functions commuting with a permutation and functions
commuting with a function.  For finite sets the number of
functions commuting with a given one and functions with extremal
properties are found. Results for finite sets are generalized to
the case of arbitrary sets where there are additional types of
functional graph components.
\end{abstract}

\begin{keywords}
functional graph, commuting functions, graph homomorphism
\end{keywords}
\begin{AMS}
05C20, 05C90
\end{AMS}

\section{Introduction} \label{intro-sec}

\subsection{The subject of study}

Composition of functions is an important binary operation in
function sets. This operation is so omnipresent and important in
mathematics, that its basic property - associativity has been
abstracted and accepted (due to associativity of set-theoretic
union and intersection as well) as a basic feature of algebraic
structures such as groups. Commutativity is the second most useful
property of algebraic structures, its importance originates from
commutativity of set-theoretic union and intersection. Functions
commuting with respect to the composition operation have been
studied for both purely theoretical and applied reasons. See
\cite{R} for an example of studies of commuting rational functions
dating back to the early 20th century.  Commutativity of linear
algebraic objects such as matrices with respect to multiplication
has been studied since Frobenius, see \cite{D}. Generalizations of
commuting functions, e.g. commuting matrices and operators, are
important in applications such as quantum physics.

In this paper we study graph structure of commuting functions and
the results involve graph models of functions - functional graphs.
The answer is well known for both functions being bijective in
finite sets. Permutations commuting with a given permutation form
a subgroup of the total permutation group, its algebraic structure
has been studied and generalized, see \cite{L}. The general case
does not seem to have been described in the literature, therefore
some further study and description of commuting functions seems
sufficiently motivated. These studies may provide additional links
between algebra and discrete mathematics. Our motivation and goal
of this paper is to fill this gap - to describe functions
commuting with a given function in terms of their functional
graphs with the functions being arbitrary. In graph-theoretic
terms this amounts to descriptions of homomorphisms of functional
graphs. Thus combinatorial problems of graph homomorphisms get an
algebraic interpretation.

\subsection{Structure and notations}

Basic notations and facts are reviewed in subsections \ref{13},
\ref{14}. The subsection \ref{15} contains the results for finite
sets, four main subcases are considered - permutations commuting
with a permutation (subsection \ref{16}), permutations commuting
with an arbitrary function (subsection \ref{17}), arbitrary
functions commuting with a permutation (subsection \ref{18}),
arbitrary functions commuting with an arbitrary function
(subsection \ref{19}). In each case functions having the minimal
number of commuting functions are described. In subsection
\ref{19} functions having one cycle and maximal number of
commuting functions are described. The section \ref{20} contains
generalizations results for infinite sets.

In this paper we denote the Cartesian product of sets $A_{1}\times
A_{2}\times ... \times A_{n}$ as
$\bigotimes\limits_{i=1}^{n}A_{i}$ (not to be confused with tensor
products).  Sequences (including pairs) of elements or sets are
denoted using square brackets. For example, the sequence having
elements $a_{1}$, $a_{2}$, ..., $a_{n}$, is denoted as $[a_{1},
a_{2}, ..., a_{n}]$. Cycles are denoted using brackets. The power
ser of the set $A$ is denoted as $2^{A}$. We use normal letters to
denote fixed objects and $\backslash mathcal$ letters to denote
objects as function values.

\subsection{Endofunctions, functional graphs and their mappings}\label{13} We denote the set of
endofunctions of a set $S$ by $\mathcal{F}un(S)$ and the set of
bijective $S$-endofunctions ($S$-permutations) by
$\mathcal{B}ij(S)$. Given a set $S$ and $f\in \mathcal{F}un(S)$ we
denote the set of all $S$-endofunctions commuting with $f$
($f$-centralizer) by $\mathcal{C}(f)$:
$$\mathcal{C}(f)=\{g\in \mathcal{F}un(S)|fg=gf\}.$$

We denote the set of all $S$-permutations commuting with $f$ by
$\mathcal{C}_{bij}(f)$:
$$\mathcal{C}_{bij}(f)=\{g\in \mathcal{B}ij(S)|fg=gf\}.$$

The graph $\Gamma$ with a vertex set $V$ and an edge set $E$ is
denoted by $\Gamma=(V,E)$, $\mathcal{V}(\Gamma)=V$,
$\mathcal{E}(\Gamma)=E$. Notation $\Delta\le \Gamma$ means that
$\Delta$ is a subgraph of $\Gamma$. We denote the directed cycle
$(V,E)$, where $V=\{x_{0},...,x_{n-1}\}$ (in this case and often
in this paper we denote indices of cycle elements as residues
$mod\ n$), $E=\bigsqcup\limits_{i=0}^{n-1} [x_{i},x_{i+1}]$ by
$(x_{0},...,x_{n-1})$.


Given a set $S$ and an endofunction $f\in \mathcal{F}un(S)$ we
define as usual the functional graph of $f$ (we call it \sl the
$f$-graph\rm): it is the directed graph $\Gamma(f)=(S,E_{f})$,
where $E_{f}=\bigsqcup\limits_{s\in S}[s,f(s)]$.

\begin{example}\label{ex1} Let $S=\{0,1,...,8\}$ - residues mod
$9$. Let $f:S\rightarrow S$, $f(x)\equiv x^2($mod\ $9)$. The
$f$-graph is shown below.

$$
\xymatrix{
3\ar[dr]& & 6\ar[dl] &8\ar[d]&2\ar[dr]&&&5\ar[dl]\\
& 0\ar@(dr,dl) && 1\ar@(dr,dl)&&4\ar@/_/[r]&7\ar@/_/[l]& \\
}
$$
\begin{center}
\

\

Fig.1.  - the $f$-graph for Example \ref{ex1}

\end{center}
\end{example}

Given two endofunctions $f$ and $g$ we can construct the weighted
\sl $(f,g)$-graph $\Gamma_{f,g}=(S,E_{f}\cup E_{g})$\rm\, where
edges of the sets $E_{f}$ and $E_{g}$ are weighted by $f$ and $g$,
respectively.

We remind the reader some basic graph theory definitions for
notational purposes. Suppose we are given two directed graphs
$\Gamma_{1}=(V_{1},E_{1})$ and $\Gamma_{2}=(V_{2},E_{2})$. We call
a function $f:V_{1}\rightarrow V_{2}$ \sl a graph
homomorphism,\rm\ denoted also as $f:\Gamma_{1}\rightarrow
\Gamma_{2}$, provided $[v,w]\in E_{1}$ implies $[f(v),f(w)]\in
E_{2}$. A graph homomorphism from $\Gamma$ to itself is called a
\sl $\Gamma$-endomorphism.\rm\

Given a directed graph $\Gamma$ we can forget orientations of its
arrows and get a undirected graph $\hat{\Gamma}$. Strictly
speaking, in general we get a multigraph since there may be pairs
of vertices with two directed edges between them. In case of
functional graphs this can happen only if there are cycles of
length $2$, the distinction between graphs and multigraphs does
not seem important for purposes of our paper. Two directed graphs
are called weakly isomorphic if the corresponding undirected
graphs are isomorphic. A \sl weakly connected component\rm\ of (a
directed graph) $\Gamma$ is an induced subgraph $\Delta\le \Gamma$
such that $\hat{\Delta}$ is a connected component of
$\hat{\Gamma}$. Clearly, $f:\Gamma_{1}\rightarrow \Gamma_{2}$ is a
graph homomorphism iff its restriction to every weakly connected
component is a graph homomorphism. Thus any graph homomorphism is
obtained by composing graph homomorphisms from weakly connected
components of the  domain. Homomorphisms from weakly connected
components can be constructed independently.

A directed graph $T$ is called a directed tree with the root $x$
provided 1) $\hat{T}$ is a tree and 2) there is a unique directed
path from any other vertex to $x$.

The set of homomorphisms (endomorphisms,  automorphisms etc.) from
$\Gamma$ to $\Delta$ is denoted by $\mathcal{H}om(\Gamma,\Delta)$
($\mathcal{E}nd(\Gamma)$, $\mathcal{A}ut(\Gamma)$).

If $\Gamma_{1}$ and $\Gamma_{2}$ are graphs with vertex weight
functions $w_{i}:\mathcal{V}(\Gamma_{i})\rightarrow k$, where $k$
is some weight set, then for $f:\mathcal{V}(\Gamma_{1})\rightarrow
\mathcal{V}(\Gamma_{2})$ to be a graph isomorphism it must satisfy
$w_{1}=w_{2}\circ f$.

Given a set $S$ and two $S$-endofunctions $f$ and $g$ we can
consider $g$ as a mapping for the graph $\Gamma(f)$ and vice
versa.

\begin{lemma} Let $S$ be a set. Let $f$ and $g$ be commuting
$S$-endofunctions.
Then

\begin{enumerate}

\item $fg=gf$ iff $g$ is a $\Gamma(f)$-endomorphism.

\item $fg=gf$ and $g\in \mathcal{B}ij(S)$ iff $g$ is an
$\Gamma(f)$-automorphism.

\end{enumerate}

\end{lemma}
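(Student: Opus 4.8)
The plan is to unwind the definitions of "graph homomorphism" and "functional graph" and observe that the condition $fg=gf$ translates verbatim into the edge-preservation condition for $g$ acting on $\Gamma(f)$. Recall that $\Gamma(f)=(S,E_f)$ with $E_f=\bigsqcup_{s\in S}[s,f(s)]$, so that the edges of $\Gamma(f)$ are precisely the ordered pairs $[s,f(s)]$ for $s\in S$. In particular, $[v,w]\in E_f$ holds if and only if $w=f(v)$.

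For part 1, I would argue as follows. Suppose first that $fg=gf$. Take any edge $[v,w]\in E_f$, so $w=f(v)$. Then $g(w)=g(f(v))=(gf)(v)=(fg)(v)=f(g(v))$, which says exactly that $[g(v),g(w)]\in E_f$; hence $g$ is a $\Gamma(f)$-endomorphism. Conversely, suppose $g\colon\Gamma(f)\to\Gamma(f)$ is a graph homomorphism. For any $s\in S$ we have $[s,f(s)]\in E_f$, so by hypothesis $[g(s),g(f(s))]\in E_f$, which means $g(f(s))=f(g(s))$; since $s$ was arbitrary, $gf=fg$. This establishes the equivalence in part 1. The only mild subtlety is making sure $E_f$ really is a set of ordered pairs indexed bijectively by $S$ (each $s$ contributes exactly one out-edge), which is built into the definition of a functional graph; there is no genuine obstacle here.

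For part 2, I would invoke part 1 together with the observation that an endomorphism of $\Gamma(f)$ is an automorphism precisely when its underlying vertex map is a bijection. The forward direction is immediate: if $fg=gf$ and $g\in\mathcal{B}ij(S)$, then by part 1 $g$ is a $\Gamma(f)$-endomorphism whose vertex map is bijective, and one checks the inverse map $g^{-1}$ is also a homomorphism—indeed, since $g$ commutes with $f$ and is invertible, $g^{-1}f=g^{-1}fgg^{-1}=g^{-1}gfg^{-1}=fg^{-1}$, so $g^{-1}$ commutes with $f$ as well and hence (by part 1) is also a $\Gamma(f)$-endomorphism; thus $g$ is an automorphism. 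Conversely, if $g$ is a $\Gamma(f)$-automorphism then in particular it is a bijective $\Gamma(f)$-endomorphism, so $g\in\mathcal{B}ij(S)$ and, by part 1, $fg=gf$.

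The argument is essentially a definition chase, so I do not expect any real obstacle; the one point deserving a sentence of care is the converse direction of part 2, namely checking that a commuting bijection's inverse also commutes with $f$ (equivalently, that a bijective homomorphism of $\Gamma(f)$ has homomorphic inverse), which is the short computation $g^{-1}f = fg^{-1}$ displayed above. Everything else follows by reading off the edge set of $\Gamma(f)$.
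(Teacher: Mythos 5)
Your proposal is correct and follows essentially the same route as the paper: part 1 is the same definition chase identifying edges $[v,f(v)]$ of $\Gamma(f)$ with the commutation identity, and part 2 uses the same key observation that a commuting bijection has a commuting inverse $g^{-1}$ (hence $g^{-1}$ is also a $\Gamma(f)$-endomorphism, making $g$ an automorphism). Your write-up is in fact slightly more explicit than the paper's, spelling out the computation $g^{-1}f=fg^{-1}$ that the paper only asserts.
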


\begin{proof}

1. If $[v,w]\in \mathcal{E}(\Gamma(f))$, then $w=f(v)$. We have
that $f(g(v))=(fg)(v)=(gf)(v)=g(w)$, therefore $[g(v),g(w)]\in
\mathcal{E}(\Gamma(f))$.

If $g\in \mathcal{E}nd(\Gamma(f))$, then $g(f(u))=f(g(u)))$,
therefore $fg=gf$.

2. By 1. $g\in \mathcal{E}nd(\Gamma(f))$.
 In the other direction, since $g\in \mathcal{C}(f)$ and $g\in
 \mathcal{B}ij(S)$,
 we have $g^{-1}\in \mathcal{C}(f)$, thus $g^{-1}\in \mathcal{E}nd(\Gamma(f))$
 and hence $g\in \mathcal{A}ut(\Gamma(f))$.
\end{proof}

\subsection{Weakly connected
components of functional graphs}\label{14}

Isomorphism types of weakly connected components of functional
graphs seem to be well known, we remind them below.

\subsubsection{Bijections}

Recall that if $f$ is a bijection, then weakly connected
components of the $f$-graph are directed cycles (called $f$-cycles
in this paper) or directed lines (called $f$-lines or infinite
$f$-cycles in this paper). We denote the isomorphism type of a
directed cycle on $n$ vertices by $Z_{n}$, we can assume that
$Z_{n}\simeq(\mathbb{Z}_{n},E_{n})$, where $\mathbb{Z}_{n}$ is the
set of residue classes $mod\ n$ and $[i,j]\in E_{n}$ iff
$i+1\equiv j(mod\ n)$. We denote the isomorphism type of directed
line by $L$, we assume that $L\simeq (\mathbb{Z},E_{\mathbb{Z}})$,
where $\mathbb{Z}$ is the set of integers and $[i,j]\in
E_{\mathbb{Z}}$ iff $i+1=j$. Thus for an arbitrary bijective
function $f$ we may assume that a weakly connected component of
the $f$-graph is isomorphic to $Z_{n}$ or $L$. This follows from
the observation, that in the $f$-graph every vertex has exactly
one incoming and outgoing edge.
\newpage

\subsubsection{Endofunctions on a finite set}

\paragraph{Pseudocycles}\ If $S$ is a finite set and $f\in
\mathcal{F}un(S)$ arbitrary, then the $f$-graph is a directed
pseudoforest -  weakly connected components of the $f$-graph are
traditionally called directed pseudotrees, see \cite{G}, but in
this paper we call them \sl $f$-pseudocycles.\rm\ We call a
directed graph \sl a pseudocycle\rm\ provided it contains exactly
one directed cycle (which may be a loop) and there is a unique
directed path from any other vertex to the closest cycle vertex,
thus any vertex of the directed cycle is a root of a directed tree
(which may consist of a single vertex). This description follows
from the observation, that in the $f$-graph every vertex has
exactly one outgoing edge. See \cite{F} for another description.
In terms of vertex weighted graphs a pseudocycle is a directed
cycle with vertices weighted by rooted directed trees. Thus we
think and denote a pseudocycle as \sl\ a tree cycle\rm\
$(T_{0},...,T_{m-1})$, where $T_{i}$ is isomorphic to a rooted
directed tree, two pseudocycles $P_{1}=(T_{0},...,T_{m-1})$ and
$P_{2}=(T'_{0},...,T'_{m-1})$ are isomorphic iff there is an
cyclic permutation $\zeta\in \Sigma_{m}$ of the sequence $P_{1}$
such that $\zeta(P_{1})\simeq P_{2}$, see below. Additionally,
conjugation by a permutation preserves the weak isomorphism type
(the cycle type in the special case of permutations).

\paragraph{Cyclic permutations of pseudocycles}\ Given a pseudocycle
$x=(x_{0},$ $...,$ $x_{m-1})$ with vertices from the multiset
$X=\{\{y_{1},...,y_{n}\}\}$ we call the vertex multiset
permutation $\sigma_{x}=(x_{0}...x_{m-1})$ (in cycle notation) \sl
the elementary shift of $x$.\rm\ We denote the set of all
permutations of $X$ by $\Sigma_{X}$. We call a permutation
$\rho\in \Sigma_{X}$ \sl cyclic permutation of $x$\rm\ if
$\rho=\sigma_{x}^{k}$ for some $k\in \mathbb{N}$. For any $x\in
X^{m}$ minimal $k\in \mathbb{N}$ such that $\sigma_{x}^{k}(x)=x$
is called \sl the order\rm\ of $x$, denoted by $ord(x)$. We call
$s_{x}=\frac{|x|}{ord(x)}$ \sl the index of $x$.\rm\
 We define two sequences (or pseudocycles) $x=(x_{0},...,x_{m-1})$ and
$y=(y_{0},...,y_{m-1})$ \sl cyclic isomorphic\rm\ if there is a
cyclic premutation $\zeta$ such that $\zeta(x)=y$.

\begin{example} If $x=(a,b,c,a,b,c)$, then
$\sigma_{x}(x)=(c,a,b,c,a,b)$, $ord(x)=3$, $s_{x}=2$.

\end{example}

\paragraph{Rooted directed trees of pseudocycles}\ We denote the directed
cycle of the pseudocycle $P$ by $\mathcal{Z}(P)$. Given $x\in
\mathcal{V}(P)$ we denote the rooted directed tree with the root
$x$ as $\mathcal{T}(x)$. We have that $\mathcal{T}(x)\cap
\mathcal{Z}(P)$ is $\{x\}$ if $x\in \mathcal{V}(\mathcal{Z}(P))$
or $\emptyset$ otherwise. By a (full) directed tree of a
pseudocycle $P$ we call $\mathcal{T}(z)$ with $z\in
\mathcal{V}(\mathcal{Z}(P))$: $\mathcal{T}(z)$ an induced subgraph
of $P$ such that $\mathcal{T}(z)\cap \mathcal{Z}(P)=\{z\}$ and
$\mathcal{V}(\mathcal{T}(z))$ contains all vertices of $P$ having
directed paths to $z$. We denote the pseudocycle having the
directed cycle $(z_{0},...,z_{m-1})$ and corresponding full
directed trees $T_{0},...,T_{m-1}$ by $(T_{0},...,T_{m-1})$.

In any directed tree we can introduce the corresponding tree
order: given two vertices $x,y$ of a directed tree we define $x\le
y$ provided there is a directed path from $y$ to $x$. Given a
directed tree $T$ denote by $\mathcal{D}_{i}(T)$ the set of
vertices being in distance $i$ to the root, thus
$\mathcal{V}(T)=\bigcup\limits_{i}\mathcal{D}_{i}(T)$. For
vertices of directed trees we define a \sl height function\rm\
$\phi$: $\phi(x)=i$ iff $x\in \mathcal{D}_{i}(T)$. Given a
directed tree $T$ with root $x$, we denote the root-truncated
graph $T\backslash \{x\}$ by $\widetilde{T}$.

\subsubsection{Endofunctions on an arbitrary set}\

If $S$ is infinite, then there are two additional types of weakly
connected components of functional graphs, which we call \sl
pseudolines (infinite pseudocycles)\rm\ and \sl pseudorays.\rm\

\paragraph{Pseudolines}

We call a directed graph a \sl\ pseudoline (infinite
pseudocycle)\rm\ provided it is isomorphic to a functional graph,
which contains at least one subgraph isomorphic to the directed
line. Note that the directed line $L$ is a special case of
pseudoline.

\begin{example}
Let $S=\mathbb{Z}$ and $f(x)=x+a$ for a fixed $a\in \mathbb{Z}$.
The $f$-graph has $a$ $f$-lines as weakly connected components.
\end{example}

We can note that in the pseudoline there is a unique directed path
from any vertex to the closest vertex of $\Lambda$, thus any
vertex of the directed line $\Lambda$ may be a root of a possibly
infinite directed tree.

\paragraph{Pseudorays}

We denote the isomorphism type of \sl directed ray\rm\ by $R$, we
may assume that $R\simeq(\mathbb{N},E_{\mathbb{N}})$, where
$\mathbb{N}$ is the set of natural numbers and $[i,j]\in
E_{\mathbb{N}}$ iff $i+1=j$. We call a functional graph a \sl
pseudoray\rm\ provided it contains a subgraph isomorphic to a
directed ray but no directed cycle or line.

\begin{example}\label{ex2}
Let $S=\mathbb{Z}$ and $f(x)=x^2$. The $f$-graph has countably
many pseudorays ($f$-rays) as weakly connected components. A
fragment of an $f$-ray is shown below.

$$
\xymatrix{
2\ar[r]& 4\ar[r]& 16\ar[r]&256\ar[r]& ... \\
-2\ar[ur]& -4\ar[ur]& -16\ar[ur]& -256\ar[ur]& ... \\
}
$$
\begin{center}
\

\

Fig.2.  - a pseudoray for Example \ref{ex2}

\end{center}

\end{example}

Similarly as in the case of pseudoline there is a unique finite
directed path from any vertex to the closest vertex of $\Lambda$,
thus any vertex of $\Lambda$ may be a root of a finite directed
tree. Note that in this case directed trees must be finite since
there are no directed line.

A pseudoline or a pseudoray may have more than one subgraph
isomorphic to a directed line or ray.

For pseudolines and pseudorays we can generalize notions of
directed trees with given roots etc. If $X$ is a directed line or
a directed ray in a functional graph and $x\in \mathcal{V}(X)$,
then $\mathcal{T}(x)$ is the maximal directed tree such that
$\mathcal{T}(x)\cap X=\{x\}$.

\section{Main results}

In this section we describe endofunctions $g\in \mathcal{F}un(S)$
commuting with a given endofunction $f\in \mathcal{F}un(S)$. In
terms of functional graphs we describe possible graph
homomorphisms of $f$-graphs. Descriptions are given as
correspondences $g\leftrightarrow [A, B, C, ...)$, where $A,B,...$
are mappings or substructures related to $S$, which are relatively
easy to describe. The first subsection \ref{15} deals with four
subcases for finite $S$. In the subsection \ref{20} the cases of
infinite $S$ are discussed.

\subsection{Finite sets}\label{15}

\subsubsection{Permutations commuting with a fixed
permutation}\label{16}

In this subsection $S$ is a finite set, $f$ is a permutation:
$f\in \mathcal{B}ij(S)$. We want to describe
$\mathcal{C}_{bij}(f)=\mathcal{C}(f)\bigcap \mathcal{B}ij(S)$ -
$f$-commuting permutations. This description seems to be well
known.

\begin{lemma}\label{1} Let $S$ be a finite set, let $f$ and $g$ be commuting $S$-endofunctions: $f g=g f$.
Let $Z=(x,f(x),...,$ $f^{k-1}(x))$ be an $f$-cycle of length $k$.

Then $g(Z)=(g(x),g(f(x)),...,g(f^{k-1}(x)))$ is an $f$-cycle of
length $k$.

\end{lemma}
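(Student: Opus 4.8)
The plan is to boil the statement down to two facts: that $g$ commutes with every iterate of $f$, and that $g$ is injective (which holds throughout this subsection, where one is interested in $g\in\mathcal{C}_{bij}(f)$). First I would prove by induction on $i$ that $fg=gf$ forces $f^{i}g=gf^{i}$ for all $i\ge 0$. Evaluating at $x$ gives $g(f^{i}(x))=f^{i}(g(x))$, so the vertex sequence $g(Z)=[g(x),g(f(x)),\dots,g(f^{k-1}(x))]$ coincides with the $f$-orbit sequence $[g(x),f(g(x)),\dots,f^{k-1}(g(x))]$ starting at $g(x)$; in particular every consecutive pair $[f^{i}(g(x)),f^{i+1}(g(x))]$ is an edge of $\Gamma(f)$, so $g(Z)$ is a directed closed walk in $\Gamma(f)$.

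Next I would check that this walk is in fact a cycle of length exactly $k$. Since $Z$ is an $f$-cycle of length $k$ we have $f^{k}(x)=x$, hence $f^{k}(g(x))=g(f^{k}(x))=g(x)$, so the walk returns to $g(x)$ after precisely $k$ steps. Finally, the $k$ vertices $x,f(x),\dots,f^{k-1}(x)$ of $Z$ are pairwise distinct by assumption, and $g$ is injective, so $g(x),g(f(x)),\dots,g(f^{k-1}(x))$ are pairwise distinct as well. A directed subgraph on $k$ distinct vertices in which consecutive vertices are joined by $f$-edges and the last is joined back to the first is exactly an $f$-cycle of length $k$, which is the assertion.

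I do not expect a real obstacle; the only thing to be careful about is that injectivity of $g$ is genuinely used — without it the image can degenerate (a constant map whose value is a fixed point of $f$ commutes with $f$ but collapses every cycle to a loop), so the hypothesis that we are working with permutations is essential here. An alternative, slightly more abstract route would invoke the earlier lemma: a permutation $g$ commuting with $f$ is a $\Gamma(f)$-automorphism, and a graph automorphism sends any directed cycle onto a directed cycle of the same length; but the direct computation above has the advantage of producing the explicit ordered vertex list $g(Z)$, which subsequent arguments in this subsection will want to refer to.
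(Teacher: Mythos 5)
Your proof is correct and follows essentially the same computation as the paper's: commute $g$ past powers of $f$ to identify the image sequence with the $f$-orbit of $g(x)$, then close it up after exactly $k$ steps using $f^{k}(x)=x$. The one point where you go beyond the paper is the distinctness step, and your caution is warranted: the paper's proof never argues that $g(x),f(g(x)),\dots,f^{k-1}(g(x))$ are pairwise distinct, and indeed for arbitrary commuting endofunctions (as the lemma is literally phrased) the conclusion fails — your constant-map-to-a-fixed-point example shows the image cycle length can only be asserted to divide $k$, which is exactly the content of the later Lemma \ref{2} — so injectivity of $g$ (the permutation context of this subsection) is genuinely needed, as you say.
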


\begin{proof} $fg=gf$ implies $g(f^{i}(x))=f^{i}(g(x))$ for all $x\in
S$ and $i\in \mathbb{Z}$. It follows that
$f(g(f^{i}(x)))=g(f^{i+1}(x))$ for $i\in \{0,...,k-2\}$. Since
$f^{k}(x)=x$ we have that $f(g(f^{k-1}(x)))=g(f^{k}(x))=g(x)$,
therefore $g(Z)=(g(x),$ $f(g(x)),$ $...,$ $f^{k-1}(g(x)))$ is an
$f$-cycle of length $k$. $g(Z)$ is independent of the choice of
$x\in Z$.
\end{proof}

\begin{remark} In terms of Lemma \ref{1} the isomorphic $f$-cycles $Z$ and $g(Z)$
may be equal or different (vertex disjoint). An $f$-commuting
permutation $g$ is determined on any $f$-cycle $Z$ by choosing
$g(x)$ in any $f$-cycle of length $|Z|$ for any fixed $x\in Z$.

\end{remark}

In the next theorem we describe $f$-commuting permutations.

\begin{theorem}\label{7} Let $S$ be a finite set, $f$ and $g$ - commuting $S$-permutations:  $f,g\in
\mathcal{B}ij(S)$, $fg=gf$. Denote the set of $f$-cycles of length
$i$ as $\mathbf{Z}_{i}=\bigsqcup\limits_{j=1}^{n_{i}}Z_{i,j}$,
where $Z_{i,j}$ denotes a cycle of length $i$. Define
$\mathbf{Z}=\bigsqcup\limits_{i}\mathbf{Z}_{i}$ For any $f$-cycle
$Z_{i,j}$ choose an element $x_{i,j}\in \mathcal{V}(Z_{i,j})$,
define the set $X=\bigsqcup\limits_{i,j}x_{i,j}$.

Then $g$ is bijectively defined by the pair $[\widetilde{g},
g|_{X}]$, where

\begin{enumerate}

\item $\widetilde{g}$ is a permutation $\mathbf{Z}\rightarrow
\mathbf{Z}$, such that
$\widetilde{g}(\mathbf{Z}_{i})=\mathbf{Z}_{i}$ and

\item $g|_{X}$ is the restriction of $g$ on $X$, $g|_{X}:
X\rightarrow S$, where $g(x_{i,j})\in
\mathcal{V}(\widetilde{g}(Z_{i,j}))$;
\end{enumerate}

\end{theorem}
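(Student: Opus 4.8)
The plan is to establish the claimed bijection $g \leftrightarrow [\widetilde{g}, g|_X]$ by describing the forward map, the reverse construction, and checking they are mutually inverse. In the forward direction, given a commuting permutation $g$, Lemma \ref{1} tells us that for each $f$-cycle $Z_{i,j}$ the image $g(Z_{i,j})$ is again an $f$-cycle of the same length $i$; this defines a map $\widetilde{g} : \mathbf{Z} \to \mathbf{Z}$ sending $Z_{i,j}$ to the $f$-cycle $g(Z_{i,j})$, and since $g$ is a bijection of $S$ permuting the $f$-cycles, $\widetilde{g}$ restricts to a permutation of each $\mathbf{Z}_i$. The second datum $g|_X$ is just the restriction of $g$ to the chosen transversal $X$, and Lemma \ref{1} (together with its proof) shows $g(x_{i,j}) \in \mathcal{V}(g(Z_{i,j})) = \mathcal{V}(\widetilde{g}(Z_{i,j}))$, so the pair indeed lies in the asserted set.

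For the reverse direction, suppose we are given a permutation $\widetilde{g}$ of $\mathbf{Z}$ preserving each $\mathbf{Z}_i$ and a function $g|_X : X \to S$ with $g(x_{i,j}) \in \mathcal{V}(\widetilde{g}(Z_{i,j}))$. I would define $g$ on all of $S$ by the forced formula: every $s \in S$ lies on a unique $f$-cycle $Z_{i,j}$, so $s = f^t(x_{i,j})$ for a unique residue $t$ mod $i$, and I set $g(s) := f^t(g(x_{i,j}))$. One then checks three things: (a) $g$ is well defined (the exponent $t$ is well defined mod $i$ and $f^i$ fixes everything on $Z_{i,j}$, whose image cycle also has length $i$); (b) $g$ commutes with $f$, which is immediate since $g(f(s)) = g(f^{t+1}(x_{i,j})) = f^{t+1}(g(x_{i,j})) = f(g(s))$; and (c) $g$ is a bijection of $S$, because on each cycle $Z_{i,j}$ it restricts to a bijection onto the length-$i$ cycle $\widetilde{g}(Z_{i,j})$, and $\widetilde{g}$ being a permutation of $\mathbf{Z}$ guarantees these images are pairwise disjoint and exhaust $S$.

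Finally I would verify the two compositions are identities: starting from $g$, forming $[\widetilde{g}, g|_X]$ and reconstructing gives back $g$ because the reconstruction formula $g(f^t(x_{i,j})) = f^t(g(x_{i,j}))$ is exactly the identity $g f^t = f^t g$ furnished by commutativity; conversely, starting from a pair, the reconstructed $g$ obviously restricts to the given $g|_X$ on $X$ and sends $Z_{i,j}$ to $\widetilde{g}(Z_{i,j})$, recovering the pair.

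The routine parts are the well-definedness and commutativity checks. The one place deserving genuine care — the main obstacle — is the surjectivity/bijectivity of the reconstructed $g$: one must argue that because $\widetilde{g}$ permutes the finite family of $f$-cycles of each length, the images $\widetilde{g}(Z_{i,j})$ form a partition of $S$, so that the piecewise-defined $g$, injective on each block with disjoint images, is a bijection of all of $S$. This is where finiteness of $S$ (equivalently, finiteness of each $\mathbf{Z}_i$) is actually used.
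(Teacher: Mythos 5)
Your proposal is correct and follows essentially the same route as the paper: the forward direction rests on Lemma \ref{1} to get a length-preserving permutation $\widetilde{g}$ of the $f$-cycles together with the restriction to the transversal $X$, and the reverse direction reconstructs $g$ via $g(f^t(x_{i,j}))=f^t(g(x_{i,j}))$. You merely spell out the well-definedness, commutativity, and bijectivity checks that the paper's proof states more tersely, which is a welcome but not substantively different elaboration.
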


\begin{proof}

By Lemma \ref{1} an $f$-commuting permutation $g$ is bijectively
determined on $f$-cycles of length $i$ by the sequence
$g(x_{i,1}),...,g(x_{i,n_{i}})$, where $g(x_{i,j})$ belongs to
some $f$-cycle of length $i$, denote this $f$-cycle by
$\widetilde{g}(Z_{i,j})$. For each $j$ $g(x_{i,j})$ determines
$g(Z_{i,j})$, $g$ is a permutation thus we get a permutation
$\widetilde{g}_{i}$ of $\mathbf{Z}_{i}$. Considering the set of
all $f$-cycles $\mathbf{Z}$ we can construct a permutation
$\widetilde{g}$, which fixes each $\mathbf{Z}_{i}$. For an
arbitrary set $X=\{x_{i,j}\}_{i,j}$ any function on $X$ given by a
set $g(x_{i,j})$, where $x_{i,j}$ belongs to a cycle of length $i$
and the corresponding function $\mathbf{Z}\rightarrow \mathbf{Z}$
is a permutation, the function $g$ can be extended to a
permutation of $S$ commuting with $f$.
\end{proof}

The next theorem describes cycle structure of $f$-commuting
permutations.

\begin{theorem} Let $S$ be a finite set, $f$ and $g$ - commuting $S$-permutations:  $f,g\in
\mathcal{B}ij(S)$, $fg=gf$. Denote the set of $f$-cycles of length
$i$ as $\mathbf{Z}_{i}=\bigsqcup\limits_{j=1}^{n_{i}}Z_{i,j}$. Let
$\widetilde{g}$ be defined as in Theorem \ref{7}.

Then
\begin{enumerate}

\item any cycle of $\widetilde{g}|_{\mathcal{V}(\mathbf{Z}_{i})}$
of length $k>1$ decomposes into $i$ $g$-cycles of length $k$;

\item any cycle of $\widetilde{g}|_{\mathcal{V}(\mathbf{Z}_{i})}$
of length $1$ corresponding a map $g_{Z}:Z\rightarrow Z$, where
$Z=\{x,f(x),...,f^{i-1}(x)\}$ is an $f$-cycle and
$g_{Z}(f^{k}(x))=f^{k+j}(x)$ ($0\le j\le i-1$) decomposes into
$GCD(i,j)$ $g$-cycles of length $\frac{i}{GCD(i,j)}$;

\item $|\mathcal{C}_{bij}(f)|=\prod\limits_{i=1}^{n}n_{i}!
i^{n_{i}}$.


\end{enumerate}

\end{theorem}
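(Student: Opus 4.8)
The plan is to analyze the $g$-action on a single $f$-cycle orbit under $\widetilde g$ and then multiply the contributions, since the three claims concern, respectively, the cycle decomposition of $g$ on each $\widetilde g$-orbit of length $>1$, on each $\widetilde g$-fixed cycle, and the total count. Throughout I would fix a length $i$ and work inside $\mathcal V(\mathbf Z_i)$, and I would repeatedly use Lemma \ref{1} (so that $g$ maps the $f$-cycle $Z_{i,j}$ isomorphically onto $\widetilde g(Z_{i,j})$, shifting the chosen base point $x_{i,j}$ by some amount) together with the normalization from Theorem \ref{7}.

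For claim 1, suppose $\widetilde g$ has a cycle $(Z_{i,j_0},\dots,Z_{i,j_{k-1}})$ of length $k>1$ on $\mathbf Z_i$. I would pick a base vertex $x$ in $Z_{i,j_0}$ and track its $g$-orbit: $g$ sends $x$ into $Z_{i,j_1}$, then into $Z_{i,j_2}$, and so on, returning to $Z_{i,j_0}$ after $k$ steps but landing on $f^{a}(x)$ for some $a\bmod i$; then $g^{k}$ restricted to $Z_{i,j_0}$ is the power map $f^{a}$, whose cycle structure on the $i$-element cycle $Z_{i,j_0}$ is $\mathrm{GCD}(i,a)$ cycles of length $i/\mathrm{GCD}(i,a)$. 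Hence the $g$-orbit of $x$ has length $k\cdot\frac{i}{\mathrm{GCD}(i,a)}$, and as $x$ ranges over $Z_{i,j_0}$ the vertices of the $k$ cycles $Z_{i,j_0},\dots,Z_{i,j_{k-1}}$ (total $ki$ vertices) break into $\mathrm{GCD}(i,a)$ $g$-cycles each of length $k\cdot i/\mathrm{GCD}(i,a)$. Here I expect the statement ``decomposes into $i$ $g$-cycles of length $k$'' in the paper to be the special case $a=0$ (which occurs precisely under the Theorem \ref{7} normalization, where $g$ sends each chosen $x_{i,j}$ to the chosen $x_{i,j'}$), so I would note that reduction and conclude $i=\mathrm{GCD}(i,0)$ cycles of length $k=k\cdot i/i$. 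Claim 2 is the case $k=1$: then $g$ restricted to $Z=\{x,f(x),\dots,f^{i-1}(x)\}$ is itself the power map $f^{j}$ with $g_Z(f^\ell(x))=f^{\ell+j}(x)$, and the standard fact about powers of an $i$-cycle gives immediately $\mathrm{GCD}(i,j)$ cycles of length $i/\mathrm{GCD}(i,j)$.

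For claim 3, I would count via the bijection of Theorem \ref{7}: a commuting permutation is the data $[\widetilde g, g|_X]$. The number of choices for $\widetilde g$ is $\prod_i n_i!$, since $\widetilde g$ is an arbitrary permutation of each $\mathbf Z_i$ (which has $n_i$ elements). Given $\widetilde g$, for each chosen base point $x_{i,j}$ the value $g(x_{i,j})$ may be any of the $i$ vertices of $\widetilde g(Z_{i,j})$, and these choices are independent across the $\sum_i n_i$ base points; this contributes $\prod_i i^{n_i}$. Multiplying gives $|\mathcal C_{bij}(f)| = \prod_{i=1}^{n} n_i!\, i^{n_i}$, with the convention that factors with $n_i=0$ are $1$.

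The main obstacle I anticipate is reconciling the precise phrasing of claims 1 and 2 with the bijective setup: one has to be careful that the ``cycle of $\widetilde g|_{\mathcal V(\mathbf Z_i)}$'' is really a cycle on the set $\mathbf Z_i$ of cycles (not on vertices), and that under the Theorem \ref{7} normalization the base-point shift $a$ in claim 1 is forced to be $0$, so that the length-$k$ $\widetilde g$-cycle genuinely contributes exactly $i$ vertex-disjoint $g$-cycles of length $k$ and not the more general $\mathrm{GCD}(i,a)$ cycles of length $ki/\mathrm{GCD}(i,a)$; the length-$1$ case in claim 2 is where the nontrivial shift $j$ is allowed and produces the $\mathrm{GCD}(i,j)$ behavior. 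The rest is the elementary computation of the cycle type of a power of a single cycle, which I would invoke without detailed proof.
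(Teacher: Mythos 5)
Your handling of parts 2 and 3 agrees with the paper's proof: part 2 is the standard cycle type of the power map $f^{j}$ on an $i$-cycle, and part 3 is the product-rule count $\prod_i n_i!\,i^{n_i}$ read off from the bijection of Theorem \ref{7}. Those are fine.

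The genuine gap is your reconciliation step in part 1. Theorem \ref{7} contains no normalization forcing the net shift $a$ to vanish: it only requires $g(x_{i,j})\in\mathcal{V}(\widetilde{g}(Z_{i,j}))$, i.e.\ $g(x_{i,j})$ may be any of the $i$ vertices of the image cycle; that very freedom is exactly what produces the factor $i^{n_i}$ you invoke in part 3, so your argument for part 1 contradicts your argument for part 3. Hence you cannot conclude $a=0$, and your own (correct) general computation --- a $\widetilde{g}$-cycle of length $k$ yields $\mathrm{GCD}(i,a)$ $g$-cycles of length $k\,i/\mathrm{GCD}(i,a)$, where $g^{k}$ acts on the first cycle as $f^{a}$ --- shows that the printed claim ``$i$ $g$-cycles of length $k$'' holds only in the special case $a\equiv 0\pmod{i}$. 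Indeed the statement as printed (and the paper's one-line justification ``due to the bijectivity of $g$'', which merely asserts it) fails in general: take $f=(1\,2)(3\,4)$ and $g=(1\,3\,2\,4)$; then $fg=gf$, $\widetilde{g}$ transposes the two $f$-cycles (so $i=2$, $k=2$), yet $g$ is a single $4$-cycle, not two $g$-cycles of length $2$. So the repair for part 1 is to state it with the shift parameter, exactly as part 2 already does for $k=1$ (i.e.\ $\mathrm{GCD}(i,a)$ cycles of length $k\,i/\mathrm{GCD}(i,a)$), rather than to appeal to a normalization that Theorem \ref{7} does not provide.
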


\begin{proof}

1. We have to find the cycle decomposition of the union of several
$f$-cycles of length $i$, which are cyclically permuted by $g$. If
$g$ cyclically permutes $k$ $f$-cycles $Z_{1},...,Z_{k}$ from
$\mathbf{Z}_{i}$,$k>1$, then due to the bijectivity of $g$ the
restriction of $g$  to the union
$\bigsqcup\limits_{j=1}^{k}\mathcal{V}(Z_{j})$ decomposes into $i$
cycles of length $k$.

2. We have to find the cycle decomposition of the $f$-cycle
$Z=(x,f(x),$ $...,$ $f^{i-1}(x))$ under $g_{Z}$, where
$g_{Z}(f^{k}(x))=f^{k+j}(x)$. $g_{Z}$ is the restriction of $g$ to
$Z$ and $Z$ is fixed by $g$,  the definition of $g_{Z}$ follows
from the commutativity condition. We have that every element of
form $f^{k}(x)$ lies in a $g$-cycle
$(f^{k}(x),f^{k+j}(x),...,f^{k+jc})$, where $c$ is the minimal
natural solution of the equation $jc\equiv 0(mod\ i)$ or the
equivalent equation $\frac{j}{GCD(i,j)}\cdot c\equiv 0(mod\
\frac{i}{GCD(i,j)})$. It follows that $c=\frac{i}{GCD(i,j)}$. Thus
the cycle length of every $g$-cycle of $Z$ is equal to $c$ and the
number of $g$-cycles is equal to $\frac{i}{c}=GCD(i,j)$.

3. For any $i\in \{1,...,n\}$ any $g$ is bijectively determined by
a permutation of $f$-cycles of length $i$ and sequence of elements
belonging to each such $f$-cycle thus the number of restrictions
of $f$-commuting permutations on $\mathbf{Z}_{i}$ is $n_{i}!\cdot
i^{n_{i}}$. For each $i$ the action of $g$ can be chosen
independently, therefore the statement follows by the product
rule.
\end{proof}

\begin{remark} Thus an $f$-commuting permutation permutes $f$-cycles of the same
length. It can be shown that, as a group, $\mathcal{C}_{bij}(f)$
can be expressed as a direct product of wreath products of certain
subgroups.

\end{remark}

We consider simplest questions in extremal combinatorics.
Obviously, if $|S|=n$, then $\max\limits_{f\in
\mathcal{B}ij(S)}|\mathcal{C}_{bij}(f)|=n!$ since
$|\mathcal{C}_{bij}(id)|=n!$. It is slightly less obvious to find
$\min\limits_{f\in \mathcal{B}ij(S)}|\mathcal{C}_{bij}(f)|$ and
permutations for which the minimum is achieved.

\begin{proposition}\label{24} Let $S$ be a finite set, $|S|=n\ge 3$. Then

\begin{enumerate}

\item $\min\limits_{f\in
\mathcal{B}ij(S)}|\mathcal{C}_{bij}(f)|=n-1$;

\item $|\mathcal{C}_{bij}(f)|=n-1$ iff $\Gamma(f)\simeq
Z_{1}\bigcup Z_{n-1}$.


\end{enumerate}

\end{proposition}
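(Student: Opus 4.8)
The plan is to reduce both parts to the formula $|\mathcal{C}_{bij}(f)| = \prod_i n_i!\, i^{n_i}$ established in the previous theorem, where $n_i$ is the number of $f$-cycles of length $i$, so that $\sum_i i\, n_i = n$. Since this value depends only on the cycle type of $f$, both assertions become a combinatorial statement about the function $\Phi(\lambda) = \prod_i n_i!\, i^{n_i}$ on partitions $\lambda$ of $n$. I would first record the value at the proposed minimizer: the cycle type $1 + (n-1)$, i.e.\ $\Gamma(f) \simeq Z_1 \cup Z_{n-1}$, gives $\Phi = 1!\cdot 1^1 \cdot 1! \cdot (n-1)^1 = n-1$, so $\min_f |\mathcal{C}_{bij}(f)| \le n-1$; what remains is the lower bound $\Phi(\lambda) \ge n-1$ for all $\lambda$, together with the equality analysis.

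For the lower bound I would isolate the number of fixed points $a := n_1$. If $a = n$ then $f = \mathrm{id}$ and $\Phi = n! > n-1$. Otherwise $f$ has at least one cycle of length $\ge 2$; writing $\ell_1, \dots, \ell_b \ge 2$ (with $b \ge 1$) for the lengths of the non-trivial cycles and $m := \ell_1 + \cdots + \ell_b = n - a \ge 2$, and discarding the factors $n_i!$ for $i \ge 2$, we get $\Phi \ge a!\,\prod_{j=1}^b \ell_j$. The only genuine ingredient is the elementary inequality: for integers $\ell_j \ge 2$ one has $\prod_j \ell_j \ge \sum_j \ell_j$, and for pairwise distinct $\ell_j$ equality holds only when $b = 1$. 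This follows by a short induction on $b$, the inductive step amounting to $(\ell_b - 1)\sum_{j<b}\ell_j \ge \ell_b$, which holds because $\sum_{j<b}\ell_j \ge 2$. Hence $\Phi \ge a!\,(n-a)$.

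It then suffices to minimize $g(a) = a!\,(n-a)$ over $0 \le a \le n-2$. Here $g(0) = n$ and $g(1) = n-1$, while for $a \ge 2$ one checks, using $n - a \ge 2$ and $a! \ge a$, that $g(a) \ge n$; so $\Phi \ge n-1$ in all cases, which is part~1. For part~2, the equality $\Phi = n-1$ forces $f \ne \mathrm{id}$, then $a = 1$ (from the values of $g$), and then both $\prod_{i \ge 2} n_i! = 1$ and $\prod_j \ell_j = m = n-1$; the first makes the $\ell_j$ pairwise distinct, so by the equality clause of the product/sum inequality $b = 1$, meaning $f$ consists of a single $(n-1)$-cycle together with one fixed point, i.e.\ $\Gamma(f) \simeq Z_1 \cup Z_{n-1}$. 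Conversely this type attains $n-1$, as noted above.

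The step I expect to require the most care is the equality analysis rather than any single estimate: one must be sure that no other partition reaches $n-1$. The natural danger is the cycle type $2 + 2$ (and its variants with extra fixed points), which does satisfy $\prod \ell_j = \sum \ell_j$; but there $n_2 = 2$, so the discarded factor $n_2! = 2$ makes $\Phi$ strictly larger, which is precisely why the condition $n_i \le 1$ must be invoked alongside the product/sum inequality. I would also check $n = 3$ by hand --- the three partitions give $\Phi = 3,\, 2,\, 6$ --- to confirm that $n \ge 3$ is the sharp hypothesis; for $n = 2$ the ``$(n-1)$-cycle'' degenerates to a fixed point and the statement fails.
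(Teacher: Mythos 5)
Your proof is correct, and while it follows the same overall strategy as the paper --- reduce everything to the cycle type of $f$ and bound $|\mathcal{C}_{bij}(f)|$ below by $a!\prod_j\ell_j$ (which is exactly the paper's subgroup count $|\mathcal{C}_0|=m!\prod_i n_i$, obtained there by permuting the fixed points and rotating each nontrivial cycle) --- the execution differs at the two key points, and in both you are more self-contained. For the lower bound the paper expands $(1+(m-1))\prod_i(1+(n_i-1))$, counts the $2^{k+1}$ resulting terms to get bounds of the form $n+(2^{k+1}-2k-2)$, and splits into cases by the number of fixed points; you instead prove the elementary inequality $\prod_j\ell_j\ge\sum_j\ell_j$ for $\ell_j\ge 2$ by induction and then minimize $a!(n-a)$, which is shorter and makes the special role of $a=1$ transparent. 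For the equality case the paper argues separately that two nontrivial cycles of equal length can be swapped (an element outside $\mathcal{C}_0$) and then cites, without proof, the strict inequality $\prod n_i>\sum n_i$ for pairwise distinct $n_i>1$ as ``proven elsewhere''; you get distinctness of the nontrivial lengths for free from the factors $n_i!$ in the exact formula $|\mathcal{C}_{bij}(f)|=\prod_i n_i!\,i^{n_i}$ (your remark about the type $2+2$ is precisely the point) and you supply the equality clause of the product/sum inequality yourself, so your equality analysis is complete where the paper defers. One small point to tighten: when you claim $a!(n-a)\ge n$ for $a\ge 2$, state explicitly that $a\le n-2$ forces $n\ge 4$ and that $a(n-a)\ge 2(n-2)\ge n$ on the range $2\le a\le n-2$; as written (``$n-a\ge 2$ and $a!\ge a$'') the reader must still fill in that endpoint/concavity step.
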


\begin{proof} This proposition is essentially the Problem 1 of a 2010 Russian student algebra olympiad,
we follow \cite{O}.

1. We first prove that $|\mathcal{C}_{bij}(f)|\ge n-1$. Let the
cycle decomposition of $f$ has $m$ fixed points and a set of
nontrivial cycles of lengths $n_{1},n_{2},...,n_{k}$. We have that
$m+\sum\limits_{i=1}^{k}n_{i}=n$. Let $\mathcal{C}_{0}\subseteq
\mathcal{C}_{bij}(f)$ be the set of $S$-permutations $g$ which
permute the fixed points and for which $\widetilde{g}$ is the
identity (every $f$-cycle is mapped to itself). By the product
rule we have  $|\mathcal{C}_{0}|=m!\prod\limits_{i=1}^{k}n_{k}\le
|\mathcal{C}_{bij}(f)|$.

Let $m\ge 2$. Then $|\mathcal{C}_{0}|\ge (1+(m-1))
\prod\limits_{i=1}^{k}(1+(n_{i}-1))$. The product has $2^{k+1}$
terms each at least $1$, there are $k+1$ linear terms of type
$n_{i}-1$. Thus $|\mathcal{C}_{0}|\ge
2^{k+1}+(m-1)-1+\sum\limits_{i=1}^{k}((n_{i}-1)-1)=n+(2^k+1-2k-2)$.
Since $2^{k+1}-2k-2\ge 0$, we have that $\mathcal{C}_{bij}(f)\ge
\mathcal{C}_{0}\ge n$.

Let $m=0$, then similarly
$\mathcal{C}_{0}=\prod\limits_{i=1}^{k}n_{i}=\prod\limits_{i=1}^{k}(1+(n_{i}-1))\ge
2^k+\sum\limits_{i=1}^{k}(n_{i}-2)=n+(2^k-2k)\ge n$.

Let $m=1$. Then $|\mathcal{C}_{0}|=\prod\limits_{i=1}^{k}n_{i}\ge
(n-1)+(2^k-2k)\ge n-1$.

For any $n\ge 3$ take $a_{n}$ be such that $\Gamma(a_{n})\simeq
Z_{1}\bigcup Z_{n-1}$. Then $|\mathcal{C}_{bij}(a_{n})|=n-1$.

2. If $m\ge 2$ or $m=0$, then $|\mathcal{C}_{bij}(f)|\ge n$.

Suppose $m=1$ and there are at least two nontrivial $f$-cycles. If
there are two nontrivial $f$-cycles of equal length then
$f$-commuting permutations can permute these $f$-cycles and thus
$|\mathcal{C}_{bij}(f)|\ge n$. If all nontrivial $f$-cycles have
different lengths then
$|\mathcal{C}_{bij}(f)|=\prod\limits_{i=1}^{k}\ge
\sum\limits_{i=1}^{k}n_{k}=n-1$. If $1<n_{1}<n_{2}<...<n_{k}$ then
a strict inequality
$\prod\limits_{i=1}^{k}n_{k}>\sum\limits_{i=1}^{k}n_{i}$ is true,
this has been proven elsewhere. Thus $|\mathcal{C}_{bij}(f)|=n-1$
only for the cycle type $Z_{1}\bigcup Z_{n-1}$.
%
\end{proof}

\subsubsection{Permutations commuting with a fixed
endofunction}\label{17}

In this subsection we describe permutations commuting with an
arbitrary fixed endofunction $f$ given on a finite set $S$ -
$\mathcal{C}_{bij}(f)=\mathcal{C}(f)\cap \mathcal{B}ij(S)$.

\begin{lemma}\label{3} Let $S$ be a finite set, $f\in \mathcal{F}un(S)$,
$g$ - an $f$-commuting permutation: $g\in \mathcal{B}ij(S)$,
$fg=gf$. Let $P=(T_{0},...,T_{m-1})$ be an $f$-pseudocycle.

Then $g(P)$ is an $f$-pseudocycle, which is cyclic isomorphic to
$P$.

\end{lemma}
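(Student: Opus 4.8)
The plan is to use the earlier Lemma (part 1), which says that an $f$-commuting permutation $g$ is a $\Gamma(f)$-endomorphism, together with the fact that $g$ is bijective, so that $g$ induces a weak automorphism of $\Gamma(f)$ and hence maps each weakly connected component isomorphically onto another one. In particular $g$ maps the $f$-pseudocycle $P=(T_0,\dots,T_{m-1})$ to a weakly connected component $g(P)$ of $\Gamma(f)$, which is again an $f$-pseudocycle (same isomorphism type in the weak sense), and $g|_P:P\to g(P)$ is a graph isomorphism preserving orientations. So the only thing that needs genuine checking is that $g$ carries the cyclic structure to a \emph{cyclic isomorphic} cyclic structure, i.e. that the directed cycle $\mathcal{Z}(P)$ is sent onto $\mathcal{Z}(g(P))$ and the rooted trees hanging off it are sent to the corresponding ones.

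First I would establish that $g$ maps the set of cycle vertices $\mathcal{V}(\mathcal{Z}(P))$ onto $\mathcal{V}(\mathcal{Z}(g(P)))$. The characterization to invoke is: a vertex $v$ lies on the directed cycle of its pseudocycle iff $v$ has an incoming edge, equivalently iff $v\in f(S)$, equivalently (since $S$ is finite) iff $v = f^k(v)$ for some $k\ge 1$, i.e. $v$ is a periodic point of $f$. Since $fg=gf$ gives $f^k(g(v)) = g(f^k(v))$, the point $v$ is periodic for $f$ iff $g(v)$ is periodic for $f$; hence $g$ restricts to a bijection from the periodic points of $f$ inside $P$ to the periodic points of $f$ inside $g(P)$, i.e. $g(\mathcal{Z}(P)) = \mathcal{Z}(g(P))$ as directed cycles. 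This already forces $|\mathcal{V}(\mathcal{Z}(P))| = |\mathcal{V}(\mathcal{Z}(g(P)))| =: m$, so both pseudocycles have cycle length $m$.

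Next I would nail down the cyclic-isomorphism statement. Pick a cycle vertex $z_0\in\mathcal{V}(\mathcal{Z}(P))$ and write $\mathcal{Z}(P) = (z_0, f(z_0), \dots, f^{m-1}(z_0))$; by Lemma \ref{1} applied to the $f$-cycle $\mathcal{Z}(P)$ of length $m$, $g$ sends this to the $f$-cycle $(g(z_0), f(g(z_0)), \dots, f^{m-1}(g(z_0)))$, which is exactly $\mathcal{Z}(g(P))$ traversed in the forward direction starting at $g(z_0)$. So $g$ matches the $i$-th cycle vertex of $P$ with the $i$-th cycle vertex of $g(P)$ (after aligning base points). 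Finally, for the trees: since $g$ is a graph isomorphism $P\to g(P)$ taking cycle vertices to cycle vertices in this orientation-respecting, index-respecting way, the full directed tree $\mathcal{T}(z_i)$ rooted at the cycle vertex $z_i = f^i(z_0)$ — which consists of all vertices with a directed $f$-path into $z_i$ meeting $\mathcal{Z}(P)$ only at $z_i$ — is carried bijectively, and as a rooted directed tree isomorphically, onto $\mathcal{T}(g(z_i)) = \mathcal{T}(f^i(g(z_0)))$, the corresponding full tree of $g(P)$. (Here one uses again that $g$ preserves directed paths and preserves the property of being a cycle vertex, so it cannot move a tree vertex onto the cycle or vice versa.) Thus writing $g(P)$ as $(T'_0,\dots,T'_{m-1})$ with $T'_i = \mathcal{T}(f^i(g(z_0)))$, we get $T_i \simeq T'_i$ for all $i$ after a cyclic shift determined by the choice of base point — which is precisely the definition of $P$ and $g(P)$ being cyclic isomorphic.

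The main obstacle, such as it is, is purely bookkeeping: making the identification of "the $i$-th tree" precise so that the cyclic shift is the \emph{only} freedom, i.e. verifying that $g$ respects the cyclic order on cycle vertices (not just the set of cycle vertices and the set of trees). This is handled by the observation above that $g$ acts on $\mathcal{Z}(P)$ as in Lemma \ref{1}, intertwining with $f$, so the successor relation $z\mapsto f(z)$ on $\mathcal{V}(\mathcal{Z}(P))$ is carried to the successor relation on $\mathcal{V}(\mathcal{Z}(g(P)))$; everything else follows. There is no real analytic or combinatorial difficulty here — the content is entirely the translation between the commutation identity $fg=gf$ and the graph-isomorphism language, which the preceding Lemma and Lemma \ref{1} have already set up.
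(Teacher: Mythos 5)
Your proposal is correct and takes essentially the same approach as the paper: regard $g$ as a $\Gamma(f)$-automorphism, conclude that it carries $\mathcal{Z}(P)$ onto $\mathcal{Z}(g(P))$ respecting the cyclic (successor) order, and that each full directed tree $\mathcal{T}(z)$ is mapped isomorphically onto $\mathcal{T}(g(z))$, giving cyclic isomorphism. The paper's proof is just a terser statement of the same three steps; your periodic-point characterization of cycle vertices and the appeal to Lemma \ref{1} merely make explicit what the paper leaves implicit.
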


\begin{proof} $g$ is an $\Gamma(f)$-automorphism, therefore $P\simeq g(P)$.
$\mathcal{Z}(P)$ is mapped by $g$ to $\mathcal{Z}(g(P))$ by a
cyclic permutation. Each directed tree $\mathcal{T}(z)$ of $P$ is
mapped isomorphically to the directed tree $\mathcal{T}(g(z))$.
\end{proof}

\begin{lemma}\label{5} Let $S$ be a finite set, $f\in \mathcal{F}un(S)$,
$g$ - an $f$-commuting permutation: $g\in \mathcal{B}ij(S)$,
$fg=gf$. $P=(T_{0},...,T_{m-1})$ is an $f$-pseudocycle.

Then
 $\zeta: \mathcal{V}(P)\rightarrow \mathcal{V}(P)$ is a $P$-automorphism
if and only if $\zeta$ is a cyclic permutation on $\mathcal{Z}(P)$
and $\zeta(T_{i})\simeq T_{i}$ for any $i\in \{0,..,m-1\}$.

\end{lemma}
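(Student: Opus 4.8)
The plan is to prove Lemma \ref{5} by analyzing how a $P$-automorphism must act on the two canonically-defined substructures of a pseudocycle: its unique directed cycle $\mathcal{Z}(P)$ and the rooted directed trees $T_0,\dots,T_{m-1}$ hanging off it. The key observation is that $\mathcal{Z}(P)$ is intrinsic to $P$: it consists precisely of the vertices lying on a directed cycle, equivalently the vertices $x$ for which $f^k(x)=x$ for some $k\ge 1$, equivalently (using that every vertex has exactly one outgoing edge, as noted in \S\ref{14}) the vertices with at least one incoming edge from every "level" — in any case, being on the directed cycle is preserved by any graph automorphism since automorphisms preserve the existence of directed closed walks. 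So any $P$-automorphism $\zeta$ restricts to a permutation of $\mathcal{V}(\mathcal{Z}(P))$, and a graph automorphism of a directed cycle on $m$ vertices is exactly a rotation, i.e. $\zeta|_{\mathcal{Z}(P)}$ is a cyclic permutation; this gives the "only if" direction's first half.

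Next I would show $\zeta$ must send each tree $\mathcal{T}(z)$ isomorphically onto $\mathcal{T}(\zeta(z))$. Since $\zeta$ is an automorphism and $\mathcal{T}(z)$ is the induced subgraph on all vertices having a directed path to $z$ (an intrinsic description), $\zeta$ carries that vertex set bijectively to the set of vertices having a directed path to $\zeta(z)$, which is $\mathcal{V}(\mathcal{T}(\zeta(z)))$, and it does so as a graph isomorphism. Writing $\mathcal{Z}(P)=(z_0,\dots,z_{m-1})$ with $T_i=\mathcal{T}(z_i)$ and letting $k$ be the rotation amount of $\zeta|_{\mathcal{Z}(P)}$, this says $\zeta(T_i)\simeq T_{i+k}$. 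In the statement's phrasing "$\zeta(T_i)\simeq T_i$" I read the tree-cycle $(T_0,\dots,T_{m-1})$ as defined up to cyclic relabelling, so the content is exactly that $\zeta$ permutes the indices cyclically while respecting the (cyclic) sequence of isomorphism types; alternatively one restricts attention to the case where $\zeta$ fixes the cycle pointwise and then $\zeta(T_i)\simeq T_i$ literally. For the "if" direction, conversely, suppose $\zeta$ is a cyclic permutation on $\mathcal{Z}(P)$ with $\zeta(T_i)\simeq T_i$ for all $i$: choose, for each $i$, a tree isomorphism $\zeta_i:T_i\to T_{i+k}$ (using the hypothesis and relabelling), and glue these together with the rotation on the cycle; one checks the glued map is well-defined (the trees meet the cycle only in their roots, which are handled consistently) and is a graph homomorphism in both directions, hence a $P$-automorphism.

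The main obstacle I anticipate is bookkeeping rather than conceptual: making precise the statement "$\zeta(T_i)\simeq T_i$" given that the tree-cycle notation $(T_0,\dots,T_{m-1})$ is only defined up to cyclic permutation, and correctly matching the rotation index $k$ with the induced permutation of trees so that the isomorphism types line up. A secondary subtlety is verifying that the glued map of the "if" direction genuinely respects edges at the interface between a tree $T_i$ and the cycle edge $[z_i,z_{i+1}]$ — but this follows because the root of $T_i$ is $z_i$, the root of $T_{i+k}$ is $z_{i+k}$, $\zeta_i(z_i)=z_{i+k}$, and $\zeta$ sends the cycle edge $[z_i,z_{i+1}]$ to $[z_{i+k},z_{i+k+1}]$, all consistent. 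I would also note explicitly that the finiteness of $S$ guarantees each pseudocycle has a genuine directed cycle (not a line or ray), so $\mathcal{Z}(P)$ is well-defined and nonempty, which is what makes the "rotation" description of $\zeta|_{\mathcal{Z}(P)}$ valid.
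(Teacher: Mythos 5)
Your proposal is correct and follows essentially the same route as the paper's (very terse) proof: the directed cycle $\mathcal{Z}(P)$ is the unique directed cycle of $P$, hence preserved and rotated by any automorphism, and each tree $\mathcal{T}(z_i)$ is carried isomorphically onto $\mathcal{T}(\zeta(z_i))$. Your write-up is simply more careful, in particular in spelling out the converse (gluing a rotation with tree isomorphisms) and the indexing $\zeta(T_i)=T_{i+k}\simeq T_i$, which the paper leaves implicit.
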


\begin{proof}
$\zeta$ must cyclically permute vertices of $\mathcal{Z}(P)$ since
it is the only oriented cycle of $P$. $\zeta$ must send each
directed tree $T_{i}$ to an isomorphic directed tree.
\end{proof}

Now we describe $f$-commuting permutations with an arbitrary $f$.

\begin{theorem}\label{33} Let $S$ be a finite set, $f\in \mathcal{F}un(S)$, $g$ - an $f$-commuting permutation:
 $g\in \mathcal{B}ij(S)$, $fg=gf$. Denote the set
of $f$-pseudocycles with a tree cycle $T$ as
$\mathbf{P}_{T}=\bigsqcup\limits_{j=1}^{n_{T}}P_{T,j}$, $P_{T,j}$
denotes an $f$-pseudocycle with a tree cycle $T$. Denote
$\mathbf{P}=\bigsqcup\limits_{T}\mathbf{P}_{T}$. For any
$f$-pseudocycle $P_{T,j}$ choose an element $x_{T,j}\in
\mathcal{V}(Z(P_{T,j}))$, let $X_{T}=\bigsqcup\limits_{j}x_{T,j}$,
$X=\bigsqcup\limits_{T}X_{T}$.

Then for every $T$ restriction $g|_{\mathcal{V}(\mathbf{P}_{T})}$
is bijectively determined by the triple $\tau=[\widetilde{g}_{T},
g|_{X_{T}}, A_{T}]$, where

\begin{enumerate}

\item $\widetilde{g}_{T}$ is a permutation of the set of
$f$-pseudocycles $\mathbf{P}_{T}$;

\item $g|_{X_{T}}$ is the restriction of $g$ on $X_{T}$,
$g|_{X_{T}}: X_{T}\rightarrow S$, where
$\mathcal{T}(f^{k}(x_{T,j}))\simeq \mathcal{T}(g(f^{k}(x_{T,j})))$
for all $j\in \{1,...,n_{T}\}$, $k\in \{0,...,|T|-1\}$ ($|T|$
denotes the length of $\mathcal{Z}(T)$);

\item $A_{T}=[\alpha_{jk}]_{j=1,k=0}^{n_{T},|T|-1}$, where
$\alpha_{jk}: \mathcal{T}(f^{k}(x_{T,j}))\rightarrow
\mathcal{T}(g(f^{k}(x_{T,j})))$ is an isomorphism of directed
trees ($A_{T}$ is a two dimensional array of graph isomorphisms).

\end{enumerate}

\end{theorem}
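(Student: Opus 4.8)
The plan is to show that the three data $[\widetilde g_T, g|_{X_T}, A_T]$ capture exactly the freedom available when building an $f$-commuting permutation, by establishing a bijection between such permutations (restricted to each $\mathbf{P}_T$) and the admissible triples. First I would recall from Lemma~\ref{3} that $g$ must permute the set $\mathbf{P}$ of $f$-pseudocycles while sending each pseudocycle to a cyclic-isomorphic one; since cyclic isomorphism preserves the tree cycle $T$ up to cyclic rotation, $g$ restricts to a permutation of each fibre $\mathbf{P}_T$, and this permutation is the datum $\widetilde g_T$. Thus the first coordinate is forced to be a permutation of $\mathbf{P}_T$ and is clearly an invariant of $g$.

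Next I would analyse how $g$ acts on a single pseudocycle $P_{T,j}$ once its image $\widetilde g_T(P_{T,j})$ is fixed. By Lemma~\ref{1} applied to the directed cycle $\mathcal{Z}(P_{T,j})$ (which is an $f$-cycle), the restriction of $g$ to $\mathcal{Z}(P_{T,j})$ is determined by the single value $g(x_{T,j})$, which must lie on $\mathcal{Z}(\widetilde g_T(P_{T,j}))$; this is the coordinate $g|_{X_T}$. The commutativity relation $g f^k = f^k g$ then forces $g(f^k(x_{T,j})) = f^k(g(x_{T,j}))$ for all $k$, so the whole cycle map is pinned down. It remains to describe $g$ on the trees hanging off the cycle vertices: for each cycle vertex $f^k(x_{T,j})$, the map $g$ must send the rooted directed tree $\mathcal{T}(f^k(x_{T,j}))$ to the rooted directed tree $\mathcal{T}(g(f^k(x_{T,j})))$ rooted at the image cycle vertex, and since $g$ is a $\Gamma(f)$-automorphism this restriction is an isomorphism of rooted directed trees — this is the entry $\alpha_{jk}$ of the array $A_T$. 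The constraint in item~2 that these trees be isomorphic is exactly the necessary and sufficient condition for such an $\alpha_{jk}$ to exist.

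Conversely, I would check that any admissible triple $[\widetilde g_T, g|_{X_T}, A_T]$ — that is, $\widetilde g_T$ a permutation of $\mathbf{P}_T$, $g|_{X_T}$ a choice of a cycle vertex in each target pseudocycle with the tree-isomorphism compatibility of item~2 along the whole cycle (using that $f$ cyclically shifts the roots), and $A_T$ a matching array of tree isomorphisms — assembles to a well-defined bijection $g$ of $\mathcal{V}(\mathbf{P}_T)$: define $g$ on cycle vertices by $g(f^k(x_{T,j})) = f^k(g(x_{T,j}))$ and on the tree over $f^k(x_{T,j})$ by $\alpha_{jk}$; bijectivity follows because $\widetilde g_T$ and each $\alpha_{jk}$ are bijections, and $fg=gf$ is verified edge by edge (cycle edges by the defining formula, tree edges because each $\alpha_{jk}$ is a directed-tree isomorphism and $f$ collapses a tree vertex toward its root in a way the isomorphism respects, with the subtlety that an edge from the bottom of a tree into its root on the cycle is sent correctly precisely because $\alpha_{jk}$ sends root to root). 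Finally, since the $\mathbf{P}_T$ partition $\mathcal{V}(\Gamma(f))$ and homomorphisms of distinct weakly connected components are independent (as noted in subsection~\ref{13}), the global $g$ is reconstructed by combining the pieces, giving the claimed bijective parametrisation. The main obstacle is the converse direction: one must phrase the compatibility conditions on $g|_{X_T}$ and $A_T$ precisely enough that the reassembled $g$ is forced to commute with $f$, and in particular handle carefully the interface edges where a tree meets the cycle, so that the tree isomorphisms $\alpha_{jk}$ glue consistently with the cycle map along the full orbit $x_{T,j}, f(x_{T,j}), \dots, f^{|T|-1}(x_{T,j})$.
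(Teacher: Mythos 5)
Your proof is correct and follows essentially the same route as the paper: Lemma \ref{3} (plus the cycle lemma) forces $g$ to permute each $\mathbf{P}_{T}$, the restriction to each directed cycle is pinned down by the single value $g(x_{T,j})$ via $g(f^{k}(x_{T,j}))=f^{k}(g(x_{T,j}))$, and the hanging trees are carried by rooted-tree isomorphisms $\alpha_{jk}$. You work out the converse (reassembling $g$ from an admissible triple and checking $fg=gf$ edge by edge, including the tree-to-cycle interface) in more detail than the paper, which essentially asserts this step, but the underlying argument is the same.
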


\begin{proof} By Lemma \ref{3} and Lemma \ref{5} any $f$-pseudocycle $P_{T,j}$ is necessarily mapped by the
isomorphism $g$ to an $f$-pseu\-do\-cyc\-le with cyclic isomorphic
sequence of directed trees, i.e.
$$\mathcal{T}(f^{k}(x_{T,j}))\simeq \mathcal{T}(g(f^{k}(x_{T,j}))),\ k\in
[0,...,|T|-1].$$ Every directed tree is mapped isomorphically to
the corresponding directed tree, which gives the array of directed
tree isomorphisms $\alpha_{j,k}$. For each $T$ and $j$
$g(x_{T,j})$ determines $\mathcal{Z}(g(P_{T,j}))$, by adding the
action of $g$ on directed trees by isomorphisms $\alpha_{j,k}$ we
get the restriction of $g$ on $\mathbf{P}_{T}$.
\end{proof}

The next theorem describes cycle structure of $f$-commuting
permutations for an arbitrary $f$.

\begin{theorem} Let $S$ be a finite set, $f\in \mathcal{F}un(S)$, $g$ - an $f$-commuting permutation:
 $g\in \mathcal{B}ij(S)$, $fg=gf$. Denote the set
of $f$-pseudocycles with a tree cycle $T$ as
$\mathbf{P}_{T}=\bigsqcup\limits_{j=1}^{n_{T}}P_{T,j}$, $P_{T,j}$
denotes an $f$-pseudocycle with a tree cycle $T$. Let
$\widetilde{g}_{T}$ be defined as in Theorem \ref{33}.

Then

\begin{enumerate}

\item each cycle of $\widetilde{g}_{T}$ of length $k>1$ decomposes
into $g$-cycles of length $k$;

\item each cycle of $\widetilde{g}_{T}$ of length $1$
corresponding to a map $g_{P}:\mathcal{V}(P)\rightarrow
\mathcal{V}(P)$, where $P$ is an $f$-pseudocycle,
$\mathcal{Z}(P)=\{x,f(x),...,f^{i-1}(x)\}$ is an $f$-cycle and
$g_{P}(f^{k}(x))=f^{k+j}(x)$ ($0\le j\le i-1$) decomposes into
$g$-cycles of length $\frac{i}{GCD(i,j)}$;

\item $|\mathcal{C}_{bij}(f)|=\Big(\prod\limits_{T}n_{T}!
s_{T}^{n_{T}}\Big)\cdot \Big(\prod\limits_{z\in
\mathcal{Z}(\mathcal{V}(f))}|\mathcal{A}ut(\mathcal{T}(z))|\Big)$,
where $s_{T}$ is the index of $T$ ($\mathcal{Z}(\mathcal{V}(f))$
is the union of all $f$-cycles of $\mathcal{V}(f)$).
\end{enumerate}

\end{theorem}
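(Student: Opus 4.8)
The plan is to reduce each of the three claims to the permutation case already handled in the previous subsection (Theorem~\ref{7} and the theorem following it), using the fact (Theorem~\ref{33}) that an $f$-commuting permutation $g$ decomposes, on each block $\mathbf{P}_T$ of $f$-pseudocycles sharing a common tree cycle $T$, into the data $[\widetilde{g}_T, g|_{X_T}, A_T]$. The key observation is that $\widetilde{g}_T$ behaves exactly like the permutation $\widetilde{g}$ of equal-length $f$-cycles in the bijective case, because cyclic-isomorphism of tree cycles plays the role that equality of cycle length played there.

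For claim~1, I would argue: if $\widetilde{g}_T$ contains a cycle of length $k>1$ permuting the pseudocycles $P_1,\dots,P_k$, then $g$ maps $\mathcal{V}(P_1)\to\mathcal{V}(P_2)\to\cdots\to\mathcal{V}(P_k)\to\mathcal{V}(P_1)$ bijectively, and $g^k$ preserves each $\mathcal{V}(P_i)$. Since $g$ is a bijection and the $P_i$ are pairwise vertex-disjoint, each $g$-orbit meeting $\mathcal{V}(P_1)$ has length a multiple of $k$; the $g^k$-orbits within a single $P_i$ can have various lengths, so the $g$-cycles here have lengths $k\cdot(\text{length of a }g^k\text{-cycle in }P_1)$, all of which are multiples of $k$ --- this matches the statement ``decomposes into $g$-cycles of length $k$'' only up to the $g^k$-structure, so I must be careful to state it as: the restriction of $g$ to $\bigsqcup_{i=1}^k\mathcal{V}(P_i)$ is the ``blow-up'' by $k$ of the $g^k$-action on $P_1$, reducing the cycle-length count on the block to the single-fixed-pseudocycle analysis of claim~2. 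For claim~2, when $\widetilde{g}_T$ fixes a pseudocycle $P$ with $\mathcal{Z}(P)$ an $f$-cycle of length $i$ and $g_P(f^k(x))=f^{k+j}(x)$ on the cycle, the behaviour on $\mathcal{Z}(P)$ is literally the fixed-cycle case of the earlier theorem (part~2), so $\mathcal{Z}(P)$ splits into $g$-cycles of length $i/\gcd(i,j)$; on each attached directed tree $\mathcal{T}(z)$ the map $g$ acts by the isomorphisms $\alpha_{j,k}$, which I track around the orbit $z, f^j(z), \dots$ --- composing the $i/\gcd(i,j)$ tree isomorphisms around one such orbit gives a single automorphism of $\mathcal{T}(z)$, and the $g$-cycle lengths on the tree are $(i/\gcd(i,j))\cdot(\text{order of that composite automorphism on a given vertex})$. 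I would cross-reference the analogous computation for cycles from the previous subsection rather than redo the $\gcd$ arithmetic.

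For claim~3 (the cardinality formula) I would count the choices in the triple $\tau=[\widetilde{g}_T, g|_{X_T}, A_T]$ of Theorem~\ref{33}, over all $T$ independently, then invoke the product rule. The number of choices for $\widetilde{g}_T$ together with $g|_{X_T}$ is $n_T!\, s_T^{\,n_T}$: a permutation of the $n_T$ pseudocycles in $\mathbf{P}_T$ contributes $n_T!$, and once the image pseudocycle $\widetilde{g}_T(P_{T,j})$ is fixed, the choice of $g(x_{T,j})$ among the cyclic-isomorphic rotations of its tree cycle contributes exactly $s_T$ possibilities (the index of $T$, i.e.\ $|T|/\mathrm{ord}(T)$, counting distinct cyclic shifts of the tree-cycle sequence that land it back in cyclic-isomorphic position). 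The array $A_T$ of tree isomorphisms then contributes, for each $f$-cycle vertex $z$ in the block, the factor $|\mathcal{A}ut(\mathcal{T}(z))|$: once $g|_{X_T}$ and $\widetilde g_T$ fix which tree maps to which, choosing the isomorphism $\alpha_{j,k}:\mathcal{T}(f^k(x_{T,j}))\to\mathcal{T}(g(f^k(x_{T,j})))$ is the same as choosing an automorphism of $\mathcal{T}(f^k(x_{T,j}))$, since any two isomorphisms differ by such an automorphism and the target tree is isomorphic to the source. Multiplying $\prod_T n_T!\, s_T^{\,n_T}$ by $\prod_{z\in\mathcal{Z}(\mathcal{V}(f))}|\mathcal{A}ut(\mathcal{T}(z))|$ gives the stated product.

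The main obstacle I anticipate is bookkeeping in claims~1 and~2 --- specifically, making precise how a $g$-orbit that travels through $k$ different pseudocycles (claim~1) and, within a fixed pseudocycle, rotates the cycle by $j$ while simultaneously dragging each attached tree through a chain of isomorphisms (claim~2), assembles into a single $g$-cycle whose length is the announced value times the order of an induced tree automorphism. The statement as written suppresses this tree-automorphism factor, so I would need to either read ``decomposes into $g$-cycles of length $\ell$'' as a statement about the cycle \emph{on the cyclic part} $\mathcal{Z}(P)$ with the tree contribution handled separately, or else insert the qualifier explicitly; in the plan I will follow the paper's convention and point out that on $\mathcal{Z}(P)$ the count is exactly as in the pure-permutation theorem, with the trees contributing only through $|\mathcal{A}ut(\mathcal{T}(z))|$ in the cardinality count. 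The cardinality formula itself (claim~3) is then a routine product-rule bookkeeping once the triple $\tau$ is known to parametrize $g|_{\mathbf{P}_T}$ bijectively, which is exactly Theorem~\ref{33}.
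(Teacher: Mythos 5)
Your proposal follows essentially the same route as the paper: claims 1 and 2 are reduced to the cycle-decomposition analysis of the pure-permutation case, and claim 3 is obtained by counting the triples $[\widetilde{g}_{T}, g|_{X_{T}}, A_{T}]$ of Theorem \ref{33} via the product rule, with the $n_{T}!$ permutations, the $s_{T}$ rotational choices per pseudocycle, and the $|\mathcal{A}ut(\mathcal{T}(z))|$ factors entering exactly as in the paper's argument. The tree-automorphism subtlety you flag in claims 1--2 is real, but the paper handles it the same way you propose to --- reading the decomposition on the cyclic part and explicitly declining to count the resulting $g$-cycles on tree vertices --- so there is no genuine divergence in approach.
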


\begin{proof}

1. We have to find the cycle decomposition of the union of several
$f$-pseudocycles each having vertex sets of size
$|\mathcal{V}(P_{T,1})|$, which are cyclically permuted by $g$. If
$g$ cyclically permutes $k$ $f$-pseudocycles $P_{1},...,P_{k}$
from $\mathbf{P}_{T}$, then due to the bijectivity of $g$ the
restriction of $g$  to the union $\bigcup_{j=1}^{k}P_{j}$
decomposes into $|\mathcal{V}(P_{T,1})|$ cycles of length $k$.

2. Proved similarly to 3. of Theorem \ref{7} . The cycle
decomposition of $\mathcal{Z}(P)$ has $GCD(i,j)$ $g$-cycles of
length $\frac{i}{GCD(i,j)}$. It induces cycle decomposition of
$P_{T}$ into $g$-cycles of the same length. The exact number of
these $g$-cycle is not given here.

\medskip

3. $\mathbf{P}_{T}$ are permuted independently, for each tree
cycle $T$ the number of permutations of $\mathbf{P}_{T}$ is
$n_{T}!$, the number of automorphisms of $\mathcal{Z}(T)$ is
$s_{T}$. Hence the number of different restrictions of commuting
permutations on cycles of $\mathbf{P}_{T}$ is $n_{T}!\cdot
s_{T}^{n_{T}}$, each directed tree of the tree cycle $T$ can be
twisted by an automorphism, formula follows by the product rule.
\end{proof}

\begin{remark} Thus an $f$-commuting permutation $g$
independently permutes $f$-pseudocycles having isomorphic cycles
of directed trees. From Lemma \ref{5} it follows that if the tree
cycle $T$ is such that it is fixed up to isomorphism by cyclic
permutations of order, which is a divisor of the total cycle
order, then corresponding $f$-pseudocycles may allow more than one
restriction of $f$-commuting permutation on their cycles (for each
pair of pseudocycles).

Additionally directed trees can be independently twisted by
automorphisms.
\end{remark}

\begin{example} \label{ex5} Let $S=\{0,...,7\}$ and the $f$-graph is given in
Figure 3 below

$$
\xymatrix{
4\ar[dr]&5\ar[d]&1\ar[dr]&6\ar[d]&7\ar[dl]\\
&0\ar[ur]&&2\ar[dl]& \\
&&3\ar[ul]&&\\ }
$$
\begin{center}

Fig.3.  - the $f$-graph for Example \ref{ex5}

\end{center}

In this case there is a single directed cycle $T=(0,1,2,3)$,
$n_{T}=1$, $s_{T}=2$. There are $2$ directed trees each having $2$
automorphism. Thus $|\mathcal{C}_{bij}(f)|=2\cdot 2^2=8$.

\end{example}

\begin{proposition}\label{23} Let $S$ be a finite set. Then

\begin{enumerate}

\item $\min\limits_{f\in
\mathcal{F}un(S)}|\mathcal{C}_{bij}(f)|=1$;

\item $|\mathcal{C}_{bij}(f)|=1$ iff any two weakly connected
components of $\Gamma(f)$ are not isomorphic and for every weakly
connected component $T=(T_{0},...,T_{m-1})$ of $\Gamma(f)$ two
conditions hold: a) index of $T$ is equal to $1$ and b)
$|Aut(T_{i})|=1$ for every directed tree.

\end{enumerate}

\end{proposition}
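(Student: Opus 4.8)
The plan is to derive everything from the counting formula
$|\mathcal{C}_{bij}(f)|=\Big(\prod\limits_{T}n_{T}!\,s_{T}^{n_{T}}\Big)\cdot\Big(\prod\limits_{z\in\mathcal{Z}(\mathcal{V}(f))}|\mathcal{A}ut(\mathcal{T}(z))|\Big)$
proved just above. Every quantity appearing in it is a positive integer, so a product of such quantities equals $1$ if and only if each factor equals $1$; this turns both parts of the proposition into elementary bookkeeping.

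For part 1, the identity permutation lies in $\mathcal{C}_{bij}(f)$ for every $f$, so $|\mathcal{C}_{bij}(f)|\ge 1$ and hence $\min_{f}|\mathcal{C}_{bij}(f)|\ge 1$. To see that the value $1$ is attained I would exhibit one suitable $f$: with $n=|S|$, take $f(i)=i+1$ for $0\le i\le n-2$ and $f(n-1)=n-1$, so that $\Gamma(f)$ is a single directed path ending in a loop. Then $\Gamma(f)$ is one pseudocycle whose cycle is a single fixed point rooting the directed path $0\to 1\to\cdots\to(n-1)$; a directed path is a rooted directed tree with trivial automorphism group (its vertices have pairwise distinct heights), and the one-vertex tree cycle has index $1$, so the formula gives $1!\cdot 1^{1}\cdot 1=1$. (When $n=1$ this is just $f=\mathrm{id}$.)

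For part 2, I would match the three blocks of factors against the three conditions. The block $\prod\limits_{T}n_{T}!$ equals $1$ precisely when $n_{T}\le 1$ for every isomorphism type $T$ of tree cycles, i.e. precisely when no two weakly connected components of $\Gamma(f)$ are isomorphic. Assuming that, the block $\prod\limits_{T}s_{T}^{n_{T}}$ equals $1$ precisely when $s_{T}=1$ for every tree cycle $T$ that actually occurs, which is condition (a), the index being $1$. Finally, since the trees $\mathcal{T}(z)$ with $z\in\mathcal{Z}(\mathcal{V}(f))$ are exactly the directed trees $T_{i}$ appearing in the tree cycles of the components, the block $\prod\limits_{z}|\mathcal{A}ut(\mathcal{T}(z))|$ equals $1$ precisely when every such $T_{i}$ has trivial automorphism group, which is condition (b). Conversely, if all three conditions hold then every factor of the formula is $1$, so $|\mathcal{C}_{bij}(f)|=1$.

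The substantive work is not in the arithmetic but in making sure the names line up: that a weakly connected component of $\Gamma(f)$ is precisely an $f$-pseudocycle, that two pseudocycles are isomorphic exactly when they are cyclic isomorphic as tree cycles (so that ``$n_{T}\le 1$ for all $T$'' really is ``no two components isomorphic''), that the index $s_{T}$ in the statement is the same $s_{T}$ that occurs in the formula, and that an automorphism of a rooted directed tree fixes its root, so $|\mathcal{A}ut(\mathcal{T}(z))|$ is the $|\mathrm{Aut}(T_{i})|$ of condition (b). All of these identifications are recorded in Subsection \ref{14} and Theorem \ref{33}; once they are invoked, the proof collapses to ``a product of positive integers is $1$ iff each factor is $1$'', so I expect no real obstacle beyond this careful translation.
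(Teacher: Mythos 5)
Your proof is correct, but it takes a different route from the paper. The paper's own argument is direct and constructive: for part 1 it exhibits an $f$ whose graph is a single weakly connected component with a directed cycle of length $n-1$ (one tree vertex attached) and asserts $\mathcal{C}_{bij}(f)=\{id\}$, while for part 2 it argues necessity of each of the three conditions by pointing out the nontrivial commuting permutation that would exist if the condition failed (permuting two isomorphic components, rotating a component of index $>1$, or twisting a tree by a nontrivial automorphism), with sufficiency left implicit in the structural description of Theorem \ref{33}. You instead invoke the counting formula $|\mathcal{C}_{bij}(f)|=\bigl(\prod_{T}n_{T}!\,s_{T}^{n_{T}}\bigr)\cdot\bigl(\prod_{z}|\mathcal{A}ut(\mathcal{T}(z))|\bigr)$ from the theorem immediately preceding the proposition and reduce everything to ``a product of positive integers is $1$ iff every factor is $1$,'' with a different witness for the minimum (a directed path feeding into a loop, which also evaluates to $1$ under the formula). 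What your approach buys is a uniform treatment of both directions of the iff, since the formula encodes exactly the three sources of nontrivial commuting permutations that the paper argues separately; what it costs is dependence on that counting theorem (whose proof in the paper is itself only sketched) and on the dictionary you correctly flag — components $=$ pseudocycles, isomorphism of components $=$ cyclic isomorphism of tree cycles (so $n_T\le 1$ for all $T$ really is ``no two components isomorphic''), and $|\mathcal{A}ut(\mathcal{T}(z))|$ being the $|\mathrm{Aut}(T_i)|$ of condition (b). Since those identifications are established in Subsection \ref{14} and Theorem \ref{33}, your argument is sound; it is essentially the same structural content as the paper's, packaged through the enumeration formula rather than through ad hoc construction of nontrivial centralizer elements.
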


\begin{proof}

1. Let $|S|=n$. Take $f$ be such that $\Gamma(f)$ has one weakly
connected component and the directed cycle of length $n-1$. Then
$\mathcal{C}_{bij}(f)=\{id\}$.

2. $|\mathcal{C}_{bij}(f)|=1$ ($f$ commutes only with $id$) iff
the conjunction of three conditions holds: a) no two weakly
connected components of $\Gamma(f)$ are isomorphic, since
otherwise they could be permuted, b) the index of any weakly
connected component is $1$, since otherwise some component could
be mapped to itself and c) there are no nontrivial automorphisms
of directed trees of any component, since otherwise the function
that would fix all other vertices and twist a directed tree by a
nontrivial automorphism would produce a notrivial commuting
function.
\end{proof}

\subsubsection{Endofunctions commuting with a fixed
permutation}\label{18}

In this subsection we describe arbitrary endofunctions commuting
with a permutation $f$ of a finite set $S$.

\begin{lemma}\label{2} Let $S$ be a finite set, $f$ - a permutation on $S$, $g$ - an arbitrary
$f$-commuting endofunction: $g\in \mathcal{F}un(S)$, $fg=gf$. Let
$Z=(x,f(x),$ $...,$ $f^{k-1}(x))$ be an $f$-cycle of length $k\in
\mathbb{N}$.

Then there is $l\in \mathbb{N}$ such that $l|k$ and
$g(Z)=(g(x),g(f(x)),...,g(f^{l-1}(x)))$ is an $f$-cycle of length
$l$. The $f$-cycle $g(Z)$ is determined by the $g$-image of any
element of $Z$.

\end{lemma}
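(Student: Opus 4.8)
The plan is to mimic the proof of Lemma~\ref{1}, but account for the fact that $g$ need no longer be injective on the cycle $Z$. First I would use the commutativity relation $fg=gf$, iterated, to obtain $g(f^{i}(x))=f^{i}(g(x))$ for all $i\in\mathbb{Z}$; this is exactly the same computation as in Lemma~\ref{1} and only needs $fg=gf$, not bijectivity of $g$. Consequently the image $g(\mathcal{V}(Z))=\{g(x),f(g(x)),f^{2}(g(x)),\dots\}$ is precisely the forward $f$-orbit of the single point $g(x)$. Since $f$ is a permutation of the finite set $S$, that forward orbit is the $f$-cycle through $g(x)$; call it $W$ and let $l=|W|$. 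Because $g(x)\in W$ and the whole image of $Z$ lies on $W$, we get $g(Z)=W$, which proves that $g(Z)$ is an $f$-cycle and that it is determined by $g(x)$ alone (and, by the relabelling $g(f^{i}(x))=f^{i}(g(x))$, equally by the $g$-image of any other element of $Z$).

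It remains to show $l\mid k$. The key observation is that $f^{k}(x)=x$ forces $f^{k}(g(x))=g(f^{k}(x))=g(x)$, so $k$ is a multiple of the period $l$ of $g(x)$ under $f$; i.e.\ $l\mid k$. I would state this as the crux of the lemma. Combining: $g$ maps the length-$k$ cycle $Z$ onto the length-$l$ cycle $W$ with $l\mid k$, each vertex $f^{i}(x)$ going to $f^{i}(g(x))$, and this is a $k/l$-to-one covering map of directed cycles.

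I do not expect a genuine obstacle here; the one point requiring a little care, as opposed to the bijective case, is that $g$ restricted to $Z$ need not be injective, so I must argue via the forward $f$-orbit of the image point rather than by a counting/bijectivity argument, and I must explicitly invoke finiteness of $S$ (together with $f$ being a permutation) to conclude that this forward orbit is a genuine cycle of finite length $l$ dividing $k$. I would also remark that independence of the choice of base point $x\in\mathcal{V}(Z)$ follows immediately from $g(f^{i}(x))=f^{i}(g(x))$, so that replacing $x$ by $f^{i}(x)$ merely cyclically shifts the listing of $g(Z)$ without changing the cycle.
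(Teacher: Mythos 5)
Your proposal is correct and follows essentially the same route as the paper's proof: iterate $fg=gf$ to get $g(f^{i}(x))=f^{i}(g(x))$, observe that $g(Z)$ is the $f$-cycle through $g(x)$, and use $f^{k}(x)=x$ to force $f^{k}(g(x))=g(x)$, hence $l\mid k$. The only cosmetic difference is that you phrase the image as the forward $f$-orbit of $g(x)$ and note the $k/l$-to-one covering, which the paper leaves implicit.
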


\begin{proof} $fg=gf$ implies $g(f^{i}(x))=f^{i}(g(x))$ for all $x\in
S$ and $i\in \mathbb{N}$. Suppose that $g(x)$ belongs to an
$f$-cycle of length $l$.  Since $f^{k}(x)=x$ we must have
$g(x)=f^{k}(g(x))$. It follows, that $l|k$ and for any $n\in
\mathbb{N}$ we have $(gf^{n})(x)=(f^{n(mod\ l)}g)(x)$, where $0\le
n(mod\ l)<l$.

If $x\in Z$ and $y=f^{l}(x)$, then $g(y)=f^{l}(g(x))$.
\end{proof}

\begin{remark} Lemma \ref{2} amounts to the fact that a homomorphic image of
a directed cycle of length $k$ is a directed cycle of length $l$
with $l|k$. For example, if $f(x)=x$ and $fg=gf$, then
$f(g(x))=g(x)$.

\end{remark}

\begin{example}\label{ex4} A cycle $(0,1,2,3)$ can be
homomorphically mapped by $g$ to the cycle $(4,5)$ as shown below.
\newpage

$$
\xymatrix{
0\ar[d]\ar@/^/[rrr]^g&2\ar[d]\ar[rr]_g&&4\ar@/_/[d]\\
1\ar[ur]\ar@/_/[rrr]_g&3\ar[ul]\ar[rr]^g&&5\ar@/_/[u]\\
}
$$
\begin{center}

\

Fig.4.  - the $f$-graph for Example \ref{ex4}

\end{center}
\end{example}

\begin{theorem}\label{6} Let $S$ be a finite set, $f$ - a permutation,  $g$ - an arbitrary $f$-commuting
endofunction: $fg=gf$. Denote the set of $f$-cycles of length $i$
as $\mathbf{Z}_{i}=\bigsqcup\limits_{j=1}^{n_{i}}Z_{i,j}$, denote
$\mathbf{Z}=\bigsqcup\limits_{i}\mathbf{Z}_{i}$. For any $f$-cycle
$Z_{i,j}$ choose an element $x_{i,j}\in \mathcal{V}(Z_{i,j})$,
denote $X=\bigsqcup\limits_{i,j}x_{i,j}$.

Then $g$ is bijectively determined by the pair
$[\widetilde{g},g|_{X}]$, where

\begin{enumerate}

\item $\widetilde{g}$ is a function
$\widetilde{g}:\mathbf{Z}\rightarrow\mathbf{Z}$ such that
$|\mathcal{V}(\widetilde{g}(Z_{i,j}))|$ divides $i$ for any
$i$,$j$.

\item $g|_{X}$ is the restriction of $g$ on $X$, $g|_{X}:
X\rightarrow S$, where $g(x_{i,j})\in
\mathcal{V}(\widetilde{g}(Z_{i,j}))$.

\end{enumerate}

\end{theorem}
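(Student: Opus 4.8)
The plan is to mirror the structure of the proof of Theorem \ref{7}, replacing the use of Lemma \ref{1} by Lemma \ref{2}, since now $g$ need not be bijective and a cycle may be mapped onto a shorter cycle. First I would observe that by Lemma \ref{2}, for each $f$-cycle $Z_{i,j}$ the image $g(Z_{i,j})$ is an $f$-cycle whose length $l$ divides $i$, and that this image cycle, together with the action of $g$ on all of $Z_{i,j}$, is completely pinned down once we know $g(x_{i,j})$: indeed $g(f^{k}(x_{i,j}))=f^{k}(g(x_{i,j}))$ by the commutation relation $fg=gf$, so the value at the single chosen representative $x_{i,j}$ propagates around the whole cycle. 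This gives the well-definedness of the assignment $g\mapsto[\widetilde g,g|_X]$: set $\widetilde g(Z_{i,j})$ to be the $f$-cycle containing $g(x_{i,j})$, which by Lemma \ref{2} has length dividing $i$, hence condition (1) holds, and $g|_X$ satisfies $g(x_{i,j})\in\mathcal V(\widetilde g(Z_{i,j}))$ by construction, hence condition (2).

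Next I would establish injectivity of the assignment: if two $f$-commuting endofunctions $g,g'$ yield the same pair, then $g(x_{i,j})=g'(x_{i,j})$ for every representative, and since every element of $S$ has the form $f^{k}(x_{i,j})$ for some $i,j,k$ (the $f$-cycles partition $S$ because $f$ is a permutation), the relation $g(f^{k}(x_{i,j}))=f^{k}(g(x_{i,j}))=f^{k}(g'(x_{i,j}))=g'(f^{k}(x_{i,j}))$ forces $g=g'$. For surjectivity I would start from an arbitrary pair $[\widetilde g,g|_X]$ satisfying (1) and (2) and define $g$ on all of $S$ by $g(f^{k}(x_{i,j})):=f^{k}(g(x_{i,j}))$; I must check this is well defined, i.e. independent of the representation of a vertex as $f^{k}(x_{i,j})$. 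The only ambiguity is $k$ versus $k+i$, and since $g(x_{i,j})$ lies in an $f$-cycle of length $l$ with $l\mid i$, we have $f^{k+i}(g(x_{i,j}))=f^{k}(g(x_{i,j}))$, so $g$ is well defined. Finally, $g$ commutes with $f$ because $g(f(f^{k}(x_{i,j})))=g(f^{k+1}(x_{i,j}))=f^{k+1}(g(x_{i,j}))=f(g(f^{k}(x_{i,j})))$, and $g$ maps into $S$, so $g\in\mathcal C(f)$.

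I expect the only genuinely delicate point to be the well-definedness check in the surjectivity step — specifically, verifying that condition (1), $|\mathcal V(\widetilde g(Z_{i,j}))|\mid i$, is exactly what is needed to make $f^{k}(g(x_{i,j}))$ depend only on $k\bmod i$, hence on the vertex $f^{k}(x_{i,j})$ itself rather than on its chosen exponent. Everything else is routine bookkeeping about the partition of $S$ into $f$-cycles and the propagation identity $g\circ f^{k}=f^{k}\circ g$. I would present the argument as the two one-line directions (forward: extract the pair; backward: build $g$ from the pair), citing Lemma \ref{2} for the divisibility constraint and noting as in the proof of Theorem \ref{7} that $g(Z_{i,j})$ does not depend on the choice of $x_{i,j}\in Z_{i,j}$.
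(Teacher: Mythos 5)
Your proposal is correct and follows essentially the same route as the paper: both invoke Lemma \ref{2} for the divisibility constraint, determine $g$ on each $f$-cycle from the image of the chosen representative via $g\circ f^{k}=f^{k}\circ g$, and note that the cycles can be handled independently. Your write-up merely makes explicit the injectivity/surjectivity checks (in particular the well-definedness of $f^{k}(g(x_{i,j}))$ modulo $i$) that the paper leaves implicit.
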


\begin{proof} By Lemma \ref{2} an $f$-cycle $Z_{i,j}$ of length $i$ can be homomorphically
mapped only to an $f$-cycle of length $l$, where $l|i$, this
defines $\widetilde{g}$. For any $f$-cycle $\widetilde{g}$ is
determined by the $g$ image of one element, say, $x_{i,j}\in
Z_{i,j}$. Images of $f$-cycles can be chosen independently.
\end{proof}

\begin{theorem}\label{42} Let $S$ be a finite set, $f$ - a permutation,  $g$ - an arbitrary $f$-commuting
endofunction: $fg=gf$. Denote the set of $f$-cycles of length $i$
as $\mathbf{Z}_{i}=\bigsqcup\limits_{j=1}^{n_{i}}Z_{i,j}$, denote
$\mathbf{Z}=\bigsqcup\limits_{i}\mathbf{Z}_{i}$. Let
$\widetilde{g}$ be defined as in Theorem \ref{6}.

Then
\begin{enumerate}

\item Any $\widetilde{g}$-cycle of length $k>1$ permuting
$f$-cycles of length $i$ decomposes into $i$ $g$-cycles of length
$k$, any $\widetilde{g}$-cycle of length $1$ corresponding to a
$f$-cycle of length $i$ fixed by $g$ decomposes into $GCD(i,j)$
$g$-cycles of length $\frac{i}{GCD(i,j)}$ for some $j$.

\item If $g(Z_{i,j})=Z_{k,l}$ with $k|i$, then the restriction of
$g$ on $Z_{i,j}\cup Z_{k,l}$ decomposes into a forest of $k$
directed trees of size $\frac{i}{k}+1$;

\item $|\mathcal{C}(f)|=\prod\limits_{i}(\sum\limits_{d|i}n_{d}
d)^{n_{i}}$, where $n_{a}$ is the number of $f$-cycles of length
$a$.

\end{enumerate}

\end{theorem}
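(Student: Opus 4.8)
The plan is to prove the three parts in order, drawing on Theorem~\ref{6} and its notation as well as the structural Lemma~\ref{2}. For part~1, I would invoke the argument already used in the analysis of $f$-commuting permutations: a $\widetilde{g}$-cycle of length $k>1$ permuting $f$-cycles $Z_{1},\ldots,Z_{k}$ of common length $i$ contributes, by the bijective bookkeeping on the disjoint union $\bigcup_{t}\mathcal{V}(Z_{t})$ and the commutation relation $g(f^{a}(x))=f^{a}(g(x))$, exactly $i$ $g$-cycles of length $k$; and a fixed point of $\widetilde{g}$ corresponding to an $f$-cycle $Z$ of length $i$ sent into itself forces $g|_{Z}$ to be the rotation $f^{a}(x)\mapsto f^{a+j}(x)$ for some $j$ (since $g$ commutes with $f$ and maps $Z$ onto a length-$i$ subcycle of itself, hence onto all of $Z$ bijectively), which decomposes into $GCD(i,j)$ cycles of length $i/GCD(i,j)$ exactly as in part~2 of the preceding cycle-structure theorem. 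This part is essentially a citation of machinery already in place.

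For part~2, the new phenomenon is that $g$ need not be injective on an $f$-cycle: if $g(Z_{i,j})=Z_{k,l}$ with $k\mid i$ and $k<i$, the restriction of $g$ to $\mathcal{V}(Z_{i,j})\cup\mathcal{V}(Z_{k,l})$ is no longer a permutation, so its functional graph is a genuine pseudoforest. I would argue as follows: on $Z_{k,l}$ the map $g$ acts as a rotation $f^{b}(y)\mapsto f^{b+j}(y)$, and by restricting attention to the fixed-point/cycle analysis of part~1 (the case $i=k$ there) one may, after a harmless reindexing, assume $g|_{Z_{k,l}}$ is the identity, so that the $k$ vertices of $Z_{k,l}$ are precisely the cyclic part of the combined functional graph. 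Every vertex of $Z_{i,j}$ has a unique path to $Z_{k,l}$ under $g$ followed by powers of $f$; using $g(f^{a}(x))=f^{a}(g(x))$ and $f^{k}(g(x))=g(x)$ (Lemma~\ref{2}), the vertex $f^{a}(x)$ maps in one step to $f^{a}(g(x))$, which lies in $Z_{k,l}$ and equals $f^{a\bmod k}(g(x))$. Partitioning $\{0,\ldots,i-1\}$ by residue mod $k$ gives $k$ classes of size $i/k$; each class, together with the single cycle-vertex it feeds into, forms a directed tree, so we obtain a forest of $k$ directed trees of size $i/k+1$, as claimed. The main content here is checking that these trees are genuinely trees (the uniqueness of paths), which follows because $f$ restricted to $Z_{k,l}$ is a bijection and the only cycle in the combined graph is $\mathcal{Z}(Z_{k,l})$.

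For part~3, I would count restrictions of $g$ to each $\mathbf{Z}_{i}$ independently, as in Theorem~\ref{6}. By Theorem~\ref{6}, a choice of $g|_{\mathbf{Z}_{i}}$ amounts, for each of the $n_{i}$ cycles $Z_{i,j}$ of length $i$, to choosing the target cycle $\widetilde{g}(Z_{i,j})$ — which may be any $f$-cycle whose length $d$ divides $i$, and there are $\sum_{d\mid i} n_{d}$ such cycles — together with the image $g(x_{i,j})$ of the chosen basepoint, which may be any of the $d$ vertices of that target cycle. Hence the number of choices for a single $Z_{i,j}$ is $\sum_{d\mid i} n_{d}\,d$, independent of $j$; the $n_{i}$ cycles are handled independently, giving $\bigl(\sum_{d\mid i} n_{d}\, d\bigr)^{n_{i}}$ for the block $\mathbf{Z}_{i}$, and the product rule over all $i$ yields the stated formula. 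The only subtlety worth a sentence is that distinct basepoints on distinct $f$-cycles are chosen with genuinely no interaction — which is exactly the content of the last line of the proof of Theorem~\ref{6} — so no overcounting or undercounting occurs.

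The step I expect to be the main obstacle is part~2: the bookkeeping that identifies the combined functional graph $g|_{Z_{i,j}\cup Z_{k,l}}$ with a forest of $k$ trees of size $i/k+1$ requires being careful about which vertices are roots and why the fibers have exactly size $i/k$; the reduction to ``$g$ acts as identity on $Z_{k,l}$'' is legitimate only up to the choice of basepoint, and one must make sure this does not change the isomorphism type of the forest. Parts~1 and~3 are routine given the earlier theorems.
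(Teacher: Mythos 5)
Your proposal is correct and follows essentially the same route as the paper: part 1 by citing the cycle-structure analysis already established for permutations (rotation by $j$ on a fixed cycle, $i$ cycles of length $k$ for a $\widetilde{g}$-cycle of length $k>1$), part 2 by showing each vertex of $Z_{k,l}$ has a fiber of exactly $\frac{i}{k}$ preimages (the residue-class argument), and part 3 by the sum rule ($\sum_{d|i}n_{d}d$ admissible images per cycle) combined with the product rule over independent cycles, which is precisely the paper's computation. The only blemish is the detour in part 2 where you ``assume $g|_{Z_{k,l}}$ is the identity'' --- $g$ need not map $Z_{k,l}$ into itself at all, so that reduction is not legitimate as stated, but it is also unnecessary, since your subsequent partition of $\{0,\ldots,i-1\}$ by residues mod $k$ already yields the forest of $k$ trees of size $\frac{i}{k}+1$ on its own.
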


\begin{proof}
1. Proved similarly to 2. and 3. of Theorem \ref{7} .

2. If $g(Z_{i,j})=Z_{k,l}$ with $k|i$, then the inverse image of
any element of $\mathcal{V}(Z_{k,l})$ contains $\frac{i}{k}$
elements.

3. Any element of $\mathcal{V}(\mathbf{Z}_{i})$ can be mapped to
an admissible $f$-cycle of length $d$ in $d$ ways. Therefore by
the sum rule the total number of admissible mappings of one vertex
is $\sum\limits_{d|i}n_{d} d$. The final formula follows by the
product rule since images of $f$-cycles can be constructed
independently.
\end{proof}

\begin{remark} The function $\widetilde{g}$ described in Theorem \ref{2} defines a pseu\-do\-fo\-rest on
$\mathbf{Z}$. Thus to define a function commuting with a
permutation $f$ we need to choose an appropriate endofunction of
$\mathbf{Z}$ and a set of representatives for $f$-cycles.
\end{remark}

\begin{remark} Combining proposals 2. and 3. of Theorem \ref{6} we
can deduce the pseudotree decomposition of $g$: any
$\widetilde{g}$-pseudocycle $P$ decomposes into
$g$-pseu\-do\-cyc\-les, which can be recovered starting from the
decomposition of $\mathcal{Z}(P)$.

\end{remark}

\begin{example}
Let $f$ be a permutation having cycle type $4^{x}3^{y}2^{z}1^{t}$
(where $i^{j}$ means, that there are $j$ cycles of length $i$). We
can check, that $$|\mathcal{C}(f)|=t^{t}(t+2z)^{z}(t+3y)^{y}
(t+2z+4x)^{x}.$$
\end{example}

\begin{proposition}\label{41} Let $S$ be a finite set, $|B|=n$, $f\in \mathcal{B}ij(f)$. Then

\begin{enumerate}

\item $\min\limits_{f\in \mathcal{B}ij(S)}|\mathcal{C}(f)|=n$;

\item $|\mathcal{C}(f)|=n$ iff $\Gamma(f)\simeq Z_{n}$ or
$\Gamma(f)\simeq Z_{1}\bigcup Z_{n-1}$.

%

\end{enumerate}

\end{proposition}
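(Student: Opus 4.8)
The plan is to read everything off the closed formula
$|\mathcal{C}(f)| = \prod_{i}(\sum_{d\mid i} n_d\, d)^{n_i}$
from Theorem~\ref{42}, where $n_a$ denotes the number of $f$-cycles of length $a$, so that $\sum_i i\,n_i = n$. Write $c_i = \sum_{d\mid i} n_d\,d$ and $I = \{i : n_i>0\}$, so $|\mathcal{C}(f)| = \prod_{i\in I} c_i^{\,n_i}$. First I would check that the two claimed graphs attain $n$: if $\Gamma(f)\simeq Z_n$ then $I=\{n\}$ and $c_n = n$, giving $|\mathcal{C}(f)| = n$; if $\Gamma(f)\simeq Z_1\cup Z_{n-1}$ then $I=\{1,n-1\}$ with $c_1 = n_1 = 1$ and $c_{n-1} = n_1\cdot 1 + n_{n-1}(n-1) = n$, again giving $|\mathcal{C}(f)| = n$.

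For the lower bound I would use two elementary observations: $c_1 = n_1$, and, since $1\mid i$ for every $i$, $c_i \ge n_1 + i\,n_i$ for every $i\in I$ with $i\ge 2$ (keep only the divisors $1$ and $i$). Writing $e = n_1$ and $J = \{i\in I : i\ge 2\}$ this yields $|\mathcal{C}(f)| \ge e^{\,e}\prod_{i\in J}(e + i\,n_i)^{n_i}$ (read $e^{\,e}$ as $1$ when $e=0$), and I would split into cases. If $J=\emptyset$ then $f=id$, $|\mathcal{C}(f)| = n^n \ge n$, strictly for $n\ge 2$. If $e = 0$ and $|J|=1$, say $I=\{M\}$ with $m$ cycles, then $|\mathcal{C}(f)| = (mM)^m = n^m\ge n$ with equality exactly when $m=1$, i.e.\ $\Gamma(f)\simeq Z_n$. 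If $e=0$ and $|J|\ge 2$, the factors $c_i\ge i\,n_i$ are all $\ge 2$, two distinct lengths $\ge 2$ occur, and the elementary fact that a product of two or more integers each $\ge 2$ strictly exceeds their sum unless it equals $2\cdot 2$ gives $|\mathcal{C}(f)| \ge \prod_{i\in J} i\,n_i > \sum_{i\in J} i\,n_i = n$. Finally, if $e\ge 1$ and $J\ne\emptyset$, I would use the inequality $\prod_{i\in J}(e+b_i)\ge e^{\,|J|-1}(e+\sum_{i\in J}b_i)$ for positive reals (immediate from $(e+b_1)(e+b_2) = e(e+b_1+b_2)+b_1b_2$) with $b_i = i\,n_i$ and $\sum_{i\in J} i\,n_i = n - e$, obtaining $|\mathcal{C}(f)| \ge e^{\,e}\cdot e^{\,|J|-1}\,n = e^{\,e+|J|-1}\,n\ge n$.

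It then remains to extract the equality cases, which I expect to be the main obstacle. Tracing equality through the last chain forces, in turn, $n_i=1$ for all $i\in J$ (so the dropped exponents were trivial), then $|J|=1$ (so the $b_1b_2$-type cross terms vanish), then $e=1$ (so $e^{\,e+|J|-1}=1$); combined with $n = e + \sum_{i\in J}i\,n_i$ this is exactly $I=\{1,n-1\}$ with $n_1 = n_{n-1} = 1$, i.e.\ $\Gamma(f)\simeq Z_1\cup Z_{n-1}$. Together with the $\Gamma(f)\simeq Z_n$ case from $e=0$, these are the only cycle types attaining $n$. The delicate point is that each of the three inequalities used in the chain ($c_i\ge n_1+i\,n_i$, $c_i^{\,n_i}\ge c_i$, and the product--sum comparisons) contributes its own equality constraint, and one has to verify that their conjunction pins down precisely the types $1^1(n-1)^1$ and $n^1$ and excludes near-misses such as $1^2(n-2)^1$ or $2^1(n-2)^1$. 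A small separate matter: for $n\le 2$ the graph $Z_1\cup Z_{n-1}$ degenerates, so the statement should be read with the convention $n\ge 3$, consistent with Propositions~\ref{24} and~\ref{23}.
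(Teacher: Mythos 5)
Your plan is correct, and it takes a genuinely different route from the paper. The paper does not use the counting formula at all: it bounds $|\mathcal{C}(f)|\ge|\mathcal{C}_{bij}(f)|$ and leans on Proposition~\ref{24} (the olympiad-style case analysis on the number $m$ of fixed points), plus an explicit description of the extremal centralizer for $\Gamma(f)\simeq Z_{1}\cup Z_{n-1}$ as the powers of $f$ together with the map sending everything to the fixed point; its treatment of the ``only if'' direction is then rather sketchy (the cases $m\ge 2$, and $m=1$ with more than one nontrivial cycle, are not really argued for $\mathcal{C}(f)$ itself, since the bound $|\mathcal{C}(f)|\ge|\mathcal{C}_{bij}(f)|\ge n$ alone does not exclude equality). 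You instead read everything off the exact product formula of Theorem~\ref{42} and run elementary product-versus-sum estimates with equality tracing; this is self-contained, gives strictness for $e\ge 2$ at once (via $e^{\,e+|J|-1}\ge 4$), and pins down the equality cases cleanly, so in this respect your argument is actually more complete than the paper's. The equality analysis you flag as the remaining obstacle does go through exactly as you list it: equality forces $c_i=e+in_i$ (no extra divisor contributions), $n_i=1$ on $J$ (since $e+in_i\ge 3$), $|J|=1$ (positive cross terms $b_ib_j$), and $e\in\{0,1\}$, which together with $n=e+\sum_{i\in J}in_i$ leaves exactly the types $n^1$ and $1^1(n-1)^1$; near-misses like $1^2(n-2)^1$ or $2^1(n-2)^1$ are excluded automatically ($4n$ and values $>n$ respectively). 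One small point worth writing out: in the case $e=0$, $|J|\ge 2$ the exceptional case $2\cdot 2$ of the product--sum inequality cannot occur, because the factors $in_i$ belong to distinct lengths $i\ge 2$ and only $i=2$, $n_2=1$ gives the value $2$. Your remark that the statement should be read with $n\ge 3$ (so that $Z_{1}\cup Z_{n-1}$ is a genuine second type) is consistent with the paper's Proposition~\ref{24}; the trade-off of the paper's approach is that it exhibits the structure of the extremal centralizer explicitly, which your counting argument does not need and does not provide.
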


\begin{proof}

1. From Proposition \ref{24} we know that $|\mathcal{C}(f)|\ge
|\mathcal{C}_{bij}|\ge n-1$ and the minimum of
$|\mathcal{C}_{bij}(f)|$ is achieved on functional graphs
isomorphic to $Z_{1}\bigcup Z_{n-1}$. Let $f$ be such that
$\Gamma(f)\simeq Z_{1}\bigcup Z_{n-1}$, then
$\mathcal{C}(f)=\{f^k|f\in \mathbb{Z}\}\bigcup \epsilon$, where
$\epsilon$ is the function which sends every vertex to the
$f$-fixed point. We have that $|\mathcal{C}(f)|=n$.

2. If $\Gamma(f)\simeq Z_{n}$ then
$|\mathcal{C}(f)|=|\mathcal{C}_{bij}(f)|=n$. If $\Gamma(f)\simeq
Z_{1}\bigcup Z_{n-1}$ then it was shown just above that
$|\mathcal{C}(f)|=n$.

Now consider the other implication. We use arguments given in
\cite{O}. Let $|\mathcal{C}(f)|=n$. As in the proof of Proposition
\ref{24} let the cycle decomposition of $f$ has $m$ fixed points
and a set of nontrivial cycles of lengths $n_{1},n_{2},...,n_{k}$,
$n_{i}\ge 2$ for all $i$.

Let $m=0$. Then $|\mathcal{C}(f)|\ge |\mathcal{C}_{bij}(f)|\ge
\prod\limits_{i=1}^{k}n_{i}\ge
\prod\limits_{i=1}^{k}(1+(n_{i}-1))\ge
2^k+\sum\limits_{i=1}^{k}((n_{i}-1)-1)=n+(2^k-2k)$. If $k>2$, then
$|\mathcal{C}(f)|\ge |\mathcal{C}_{bij}(f)|>n$. If $k=2$, then
$n_{1}n_{2}=n=n_{1}+n_{2}$ only if $n_{1}=n_{2}=2$ and $n=4$. In
this case $|\mathcal{C}_{bij}(f)|=8>4=n$. We are left with the
only possible choice $k=1$. If $\Gamma(f)\simeq Z_{n}$, then
$|\mathcal{C}(f)|=n$.

Let $m=1$. Then $|\mathcal{C}(f)|\ge |\mathcal{C}_{bij}(f)|\ge
n-1$ and the bound is reached for $\Gamma(f)\simeq Z_{1}\bigcup
Z_{n-1}$.
%
\end{proof}

\subsubsection{Arbitrary endofunctions commuting with a fixed arbitrary endofunction}\label{19}

Finally in this subsection we consider the general case for a
finite set $S$ -  $\mathcal{C}(f)$ with $f\in \mathcal{F}un(S)$.
In this case we describe restrictions of commuting functions on
individual weakly connected components.

\begin{theorem}\label{9} Let $S$ be a finite set, $f$ and $g$ - commuting $S$-sndofunctions:
$fg=gf$. Denote the set of directed cycles of length $i$ of the
$f$-pseudoforest as
$\mathbf{Z}_{i}=\bigsqcup\limits_{j=1}^{n_{i}}Z_{i,j}$. Denote
$\mathbf{Z}=\bigsqcup\limits_{i}\mathbf{Z}_{i}$.

Then the restriction of $g$ on $\mathcal{V}(\mathbf{Z})$
determines a function
$\widetilde{g}:\mathbf{Z}\rightarrow\mathbf{Z}$, such that
$|\widetilde{g}(Z_{i,j})|$ divides $i$ for all $i$,$j$.

\end{theorem}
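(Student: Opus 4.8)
The plan is to mimic the reasoning used for permutations (Lemma~\ref{2} and Theorem~\ref{6}), but now with $f$ an arbitrary endofunction. The key observation is that the directed cycles $Z_{i,j}$ of the $f$-pseudoforest are exactly the weakly connected components on which $f$ restricts to a bijection, and moreover any vertex lying on a directed cycle is characterized graph-theoretically: $v\in\mathcal{V}(\mathbf{Z})$ if and only if $v$ has an incoming edge in $\Gamma(f)$ that lies on the unique cycle of its component, equivalently $v=f^{t}(v)$ for some $t\ge 1$ (i.e. $v$ is a periodic point of $f$). First I would record this characterization of periodic points and note that it is preserved by any $\Gamma(f)$-endomorphism $g$: if $v=f^{t}(v)$ then $g(v)=g(f^{t}(v))=f^{t}(g(v))$ by commutativity, so $g(v)$ is again periodic, hence $g(\mathcal{V}(\mathbf{Z}))\subseteq \mathcal{V}(\mathbf{Z})$ and $g$ restricts to a well-defined self-map of $\mathcal{V}(\mathbf{Z})$.

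Next I would show that $g$ maps each $f$-cycle $Z_{i,j}$ onto a single $f$-cycle whose length divides $i$, which is essentially a restatement of Lemma~\ref{2}. Fix $x=x_{i,j}\in\mathcal{V}(Z_{i,j})$; since $g(x)$ is periodic it lies on some $f$-cycle, say of length $l$, and from $x=f^{i}(x)$ and the identity $g(f^{i}(x))=f^{i}(g(x))$ we get $g(x)=f^{i}(g(x))$, forcing $l\mid i$. Applying $g$ to the description $Z_{i,j}=(x,f(x),\dots,f^{i-1}(x))$ and using $g(f^{k}(x))=f^{k}(g(x))$ shows that $g(\mathcal{V}(Z_{i,j}))=\{g(x),f(g(x)),\dots\}=\mathcal{V}(Z')$ where $Z'$ is the $f$-cycle of length $l$ through $g(x)$; by Lemma~\ref{2} this image is independent of the chosen base point $x$. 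Hence the assignment $Z_{i,j}\mapsto Z'$ is well-defined, giving a map $\widetilde{g}:\mathbf{Z}\to\mathbf{Z}$, and by construction $|\widetilde{g}(Z_{i,j})|=l$ divides $i$.

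The remaining point is simply that $\widetilde{g}$ is determined by (the restriction of) $g$ on $\mathcal{V}(\mathbf{Z})$: since $\mathbf{Z}$ is, by definition, a disjoint union of vertex-disjoint $f$-cycles and $g$ sends the vertex set of each $f$-cycle into the vertex set of another $f$-cycle, the induced map on the (finite) index set of $f$-cycles is exactly $\widetilde{g}$. I expect no serious obstacle here; the only mild subtlety is making sure that the image $\widetilde{g}(Z_{i,j})$ really is a \emph{single} cycle and not a union of several — but this is immediate because $\mathcal{V}(Z_{i,j})$ is a single $f$-orbit, so its $g$-image is contained in the $f$-orbit of $g(x)$, which is one cycle. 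Everything else is bookkeeping parallel to the proof of Theorem~\ref{6}, with ``$\mathbb{N}$'' replaced by ``nonnegative integers'' in the orbit computations since $f$ need not be invertible globally, though on the periodic part it still is.
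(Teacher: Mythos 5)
Your proof is correct and follows essentially the same route as the paper, which simply defers to the argument of Theorem~\ref{6} (i.e.\ the Lemma~\ref{2} computation $g(f^{i}(x))=f^{i}(g(x))$, $g(x)=f^{i}(g(x))$, hence image cycle length $l\mid i$). Your version merely makes explicit the small point the paper leaves implicit — that commutativity forces $g$ to send periodic points to periodic points, so $g(\mathcal{V}(\mathbf{Z}))\subseteq\mathcal{V}(\mathbf{Z})$ and the induced map $\widetilde{g}$ on cycles is well defined.
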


\begin{proof} Similarly to Theorem \ref{6}. \end{proof}

The next two lemmas and Theorem \ref{11} deal with images of
directed trees under graph homomorphisms.

\begin{lemma}\label{10} Let $S$ be a finite set, $f$ and $g$ - commuting endofunctions:
$f g=g f$. Let $P$ be an $f$-pseudocycle. Let $Z$ be the union of
$f$-cycles. Let $x\in \mathcal{V}(P)$, $y\in f^{-1}(x)$.

Then
\begin{enumerate}

\item if $g(x)\in \mathcal{V}(Z)$, then either $g(y)$ and $g(x)$
belong to the same $f$-cycle, or $\phi(g(y))=1$;

\item if $\phi(g(x))=i>0$, then $\phi(g(y))=i+1$.

\end{enumerate}

\end{lemma}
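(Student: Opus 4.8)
The plan is to exploit the commutation identity $gf=fg$ together with the structural description of $f$-pseudocycles, namely that every vertex has a unique outgoing $f$-edge and that the non-cycle part of a pseudocycle is a disjoint union of rooted directed trees attached to cycle vertices. The key relation to use throughout is that if $y\in f^{-1}(x)$, i.e. $[y,x]\in\mathcal{E}(\Gamma(f))$, then applying $g$ gives $[g(y),g(x)]\in\mathcal{E}(\Gamma(f))$ by part 1 of the first Lemma (graph homomorphism property), so $f(g(y))=g(x)$; that is, $g(y)\in f^{-1}(g(x))$.

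For part 1, I would argue as follows. Suppose $g(x)\in\mathcal{V}(Z)$, so $g(x)$ lies on some $f$-cycle $C$ of length $k$. Since $g(y)\in f^{-1}(g(x))$, the vertex $g(y)$ is an $f$-predecessor of a cycle vertex. In a pseudocycle there are exactly two kinds of $f$-predecessors of a cycle vertex $g(x)$: the unique cycle vertex $f^{k-1}(g(x))$ preceding it on $C$, and (possibly) tree vertices at height $1$ hanging off $g(x)$. In the first case $g(y)$ and $g(x)$ lie on the same $f$-cycle $C$; in the second case $g(y)$ is a height-$1$ vertex, i.e. $\phi(g(y))=1$. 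This case split, justified by the pseudocycle structure (unique directed path from any vertex to its closest cycle vertex), gives exactly the stated dichotomy.

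For part 2, suppose $\phi(g(x))=i>0$, so $g(x)$ is a non-cycle vertex of some pseudocycle, sitting at distance $i$ from the cycle along the unique directed path to the cycle. Again $g(y)\in f^{-1}(g(x))$, so $g(y)$ is an $f$-predecessor of $g(x)$. Because $g(x)$ is \emph{not} a cycle vertex, its only $f$-predecessors are tree vertices one level further from the cycle; more precisely, following the unique directed path from $g(y)$ to the closest cycle vertex, it must pass through $g(x)$, so $\phi(g(y))=\phi(g(x))+1=i+1$. Here I use that for a non-cycle vertex $w$, every vertex in $f^{-1}(w)$ has height $\phi(w)+1$, which is immediate from the tree structure and the height function $\phi$ defined earlier.

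The only mild subtlety—and the step I would treat most carefully—is part 1, specifically making precise that a cycle vertex can have $f$-predecessors of exactly these two types (one cycle predecessor, plus height-$1$ tree vertices) and no others, and that these two possibilities are exactly what the two disjuncts encode. This is really just unwinding the definition of pseudocycle as a tree cycle $(T_0,\dots,T_{m-1})$, but it is worth stating explicitly since it is where the combinatorial content sits; parts about height increments in part 2 are then routine from the directed-tree order.
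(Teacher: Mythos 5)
Your argument is correct and is exactly the argument the paper intends: commutativity makes $g$ a $\Gamma(f)$-endomorphism, so $g(y)\in f^{-1}(g(x))$, and the two statements then follow from the structure of pseudocycles (the $f$-predecessors of a cycle vertex are the preceding cycle vertex or height-$1$ tree vertices, while the predecessors of a height-$i$ tree vertex all have height $i+1$). The paper's own proof is just the one-line remark that this "follows from commutativity of $f$ and $g$", so your write-up is the same approach with the omitted details supplied.
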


\begin{proof} Follows from commutativity of $f$ and $g$. \end{proof}

\begin{lemma}\label{4} Let $S$ be a finite set, $f$ and $g$ - commuting endofunctions:
$f g=g f$. Let $P$ be an $f$-pseudocycle, $T$ is the (full)
directed tree of $P$ with the root $z$.

Then there is $\mathcal{A}\subseteq \mathcal{V}(T\backslash
\{z\})$, such that:

\begin{enumerate}

\item[1)] for any $x\in \mathcal{V}(T)$ we have that
$\phi(g(x))=1$ iff $x\in \mathcal{A}$;

\item[2)] there are no distinct elements $x\in \mathcal{A}$, $y\in
\mathcal{A}$, such that $x<y$ in the tree order;

\item[3)] for any $x\in \mathcal{A}$, if $x\le y$, then $g(x)$ and
$g(y)$ are in the same directed tree of $g(P)$ and $g(x)\le g(y)$.

\end{enumerate}

\end{lemma}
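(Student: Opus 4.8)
\textbf{Proof proposal for Lemma \ref{4}.}

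The plan is to construct the set $\mathcal{A}$ explicitly as $\mathcal{A}=\{x\in\mathcal{V}(\widetilde{T})\mid \phi(g(x))=1\}$ and then verify the three listed properties. Property 1) will then hold by construction, once I check that no vertex $x$ with $\phi(g(x))=1$ can be the root $z$ itself: indeed, if $\phi(g(z))=1$ then $g(z)$ lies on a directed tree at height $1$, but since $z$ is a cycle vertex of $P$ we have $f^{m}(z)=z$ for some $m$ (the cycle length through $z$), so $g(z)=g(f^{m}(z))=f^{m}(g(z))$, forcing $g(z)$ onto a directed cycle, i.e. $\phi(g(z))=0$, a contradiction. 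Hence $\mathcal{A}\subseteq\mathcal{V}(\widetilde{T})$ as required.

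For property 2), suppose $x,y\in\mathcal{A}$ with $x<y$ in the tree order, so there is a directed path of some length $d\ge 1$ from $y$ to $x$ in $T$, meaning $f^{d}(y)=x$. Applying $g$ and using $gf=fg$ repeatedly gives $g(x)=f^{d}(g(y))$, so there is a directed path of length $d$ from $g(y)$ to $g(x)$ in $\Gamma(f)$. By part 2) of Lemma \ref{10} (iterated $d$ times along the path $y,f(y),\dots,f^{d}(y)=x$), since $\phi(g(y))=1>0$ we get $\phi(g(f(y)))=2$, and inductively $\phi(g(x))=\phi(g(f^{d}(y)))=1+d\ge 2$, contradicting $x\in\mathcal{A}$. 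So no two comparable elements lie in $\mathcal{A}$.

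For property 3), take $x\in\mathcal{A}$ and $y$ with $x\le y$, so $f^{e}(y)=x$ for some $e\ge 0$. Then $g(x)=f^{e}(g(y))$, which immediately says $g(x)\le g(y)$ in whatever directed tree contains them (there is a directed path from $g(y)$ to $g(x)$), provided both lie in a directed tree and not on a cycle. Since $x\in\mathcal{A}$ has $\phi(g(x))=1$, $g(x)$ is on a directed tree of $g(P)$; because $g(x)=f^{e}(g(y))$ and heights increase by exactly $1$ under $f^{-1}$-steps inside a tree (Lemma \ref{10}, part 2), $g(y)$ is the unique ancestor of $g(x)$ at height $1+e$ in the \emph{same} directed tree of $g(P)$, so $g(x)$ and $g(y)$ lie in one directed tree and $g(x)\le g(y)$. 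The main obstacle I anticipate is bookkeeping the edge case where $g$ sends part of the tree $T$ onto a cycle (the $\phi=0$ stratum) or onto height-$1$ vertices belonging to different trees of $g(P)$; part 1) of Lemma \ref{10} is exactly the tool that controls the jump from the $\phi=0$ region into the trees, guaranteeing that the "first" vertices mapped to height $1$ — namely the minimal elements of each fiber that escapes the cycle — are precisely the elements of $\mathcal{A}$, and that once above an element of $\mathcal{A}$ one stays coherently inside a single directed tree of $g(P)$.
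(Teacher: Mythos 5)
Your construction of $\mathcal{A}$ (define it by condition 1), check that the root cannot qualify, then verify 2) and 3) via Lemma \ref{10}) is exactly the paper's route, and your observation that $\phi(g(z))=0$ because $g(z)=f^{m}(g(z))$ forces $g(z)$ onto a cycle is a correct and welcome detail that the paper leaves implicit. The argument for property 3) is also essentially sound: starting from $\phi(g(x))=1$ and moving away from the root, Lemma \ref{10}(2) gives $\phi(g(y))=1+e$, and the directed path $g(y),f(g(y)),\dots,f^{e}(g(y))=g(x)$ cannot meet the cycle (its endpoint has height $1$), so both images lie in one directed tree with $g(x)\le g(y)$.

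However, your verification of property 2) applies Lemma \ref{10}(2) in the wrong direction, and its intermediate claims are false. The lemma says: if $y\in f^{-1}(x)$ and $\phi(g(x))=i>0$, then $\phi(g(y))=i+1$; that is, image heights grow as you move \emph{away} from the root, and nothing of this form holds when you move \emph{toward} it. From $\phi(g(y))=1$ you cannot conclude $\phi(g(f(y)))=2$; on the contrary, $f(g(y))$ is the root of the tree containing $g(y)$, hence lies on the cycle, so $\phi(g(f(y)))=0$ and $\phi(g(x))=\phi(f^{d}(g(y)))=0$, not $1+d$. The contradiction you want is still available, in two correct ways: either note that $\phi(g(x))=0$ contradicts $x\in\mathcal{A}$, or run Lemma \ref{10}(2) in its stated direction starting from $\phi(g(x))=1$ at the lower vertex $x$ and climbing the path $x=f^{d}(y),f^{d-1}(y),\dots,y$ to get $\phi(g(y))=1+d\ge 2$, contradicting $y\in\mathcal{A}$ (this is in fact the same computation you use correctly in 3)). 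So the lemma is not in danger, but the written argument for 2) must be repaired along these lines before it is a proof.
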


\begin{proof} We construct $\mathcal{A}$ using 1) as its definition. Alternatively we construct it iteratively
considering images under $g$ of the sequence of subsets
$[\mathcal{D}_{1}(T), \mathcal{D}_{2}(T), ...]$.

Consider $\mathcal{D}_{1}(T)$. For any $x\in \mathcal{D}_{1}(T)$
either $g(x)\in \mathcal{V}(\mathcal{Z}(g(P)))$ or $\phi(g(x))=1$.
Define $\mathcal{A}(1)=\{x\in \mathcal{D}_{1}(T)|\phi(g(x))=1\}$.
If $y\in \mathcal{V}(T)$ is such that $y\ge x$ for some $x\in
\mathcal{A}_{1}$, then $\phi(g(y))>1$ and $g(y)\ge g(x)$.

Consider $\mathcal{D}_{2}(T)$. For any $x\in \mathcal{D}_{2}(T)$
either $g(x)\in \mathcal{V}(\mathcal{Z}(g(P)))$, $\phi(g(x))=1$ or
$\phi(g(x))=2$ (this happens if $x>x_{1}$ for some $x_{1}\in
\mathcal{A}(1)$). Define $\mathcal{A}(2)=\{x\in
\mathcal{D}_{2}(T)|\phi(g(x))=1\}$. If $y\in \mathcal{V}(T)$ is
such that $y\ge x$ for some $x\in \mathcal{A}(2)$, then
$\phi(g(y))>1$ and $g(y)\ge g(x)$.

We continue this process until we reach the maximal $k$ such that
$\mathcal{D}_{k}(T)\ne \emptyset$. Define
$\mathcal{A}=\bigsqcup\limits_{h\ge 1}\mathcal{A}(h)$. Statement
$1)$ follows by construction, statements $2)$,$3)$ follow by Lemma
\ref{10}.
\end{proof}

Let $T$ be a directed tree with the root $z$. We call a $T$-vertex
subset $A\subseteq \mathcal{V}(T\backslash \{z\})$ \sl
incomparable vertex subset\rm\ provided there are no two distinct
$x,y\in A$ such that $x<y$ in the tree order. Denote by $Inc(T)$
the set of all incomparable vertex subsets of $T$.

\begin{example}\label{ex9} Let $T$ be the directed tree given
below:

$$
\xymatrix{
3\ar[dr]&&4\ar[dl]&5\ar[d]\\
&1\ar[dr]&&2\ar[dl]\\
&&0&\\
}
$$
\begin{center}

\

Fig.5.  - the $f$-graph for Example \ref{ex9}

\end{center}

Then $Inc(T)$ contains $\emptyset$,  five $1$-element subsets
$1,2,3,4,5$,  seven $2$-element subsets $\{1,2\},\{1,5\},\{2,3\},
\{2,4\},\{3,4\},\{3,5\},\{4,5\}$ and one $3$-element subset
$\{3,4,5\}\}$.

\end{example}

\begin{remark} For any directed tree the minimal number of
elements of an incomparable vertex subset is $0$, but the maximal
number of elements is the number of vertices of indegree $0$ (\sl
leaves\rm).

\end{remark}

\begin{theorem} \label{11} Let $S$ be a finite set, $f$ and $g$ - commuting endofunctions:
$fg=gf$.

Let $P$ be a $f$-pseudocycle of cycle length $i$ with the
$f$-cycle $Z=(z_{0},...,z_{i-1})$.

Then the restriction $g|_{P}$ is bijectively determined by the
triple $\tau=[g(z_{0}), [\mathcal{A}], [\Phi]]$, where

\begin{enumerate}

\item $[\mathcal{A}]$ is a sequence of incomparable vertex
subsets: $[\mathcal{A}]=[\mathcal{A}_{0},...,\mathcal{A}_{i-1}]$,
$\mathcal{A}_{k}\subseteq \mathcal{V}(\mathcal{T}(z_{k})\backslash
z_{k})$ for $k\in \{0,...,i-1\}$ and any $\mathcal{A}_{k}$ is an
incomparable vertex subset.

\item $[\Phi]$ is a sequence of ordered sets of directed tree
homomorphisms - $[\Phi]=[\Phi_{0},...,\Phi_{i-1}]$ with
$\Phi_{k}=[\varphi_{x}]_{x\in \mathcal{A}_{k}}$, where
$\varphi_{x}\in \mathcal{H}om(\mathcal{T}(x), \mathcal{T}(g(x)))$.

\end{enumerate}

\end{theorem}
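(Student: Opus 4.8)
The goal is to show that the restriction $g|_P$ of a commuting endofunction to an $f$-pseudocycle $P$ (with cycle $Z=(z_0,\dots,z_{i-1})$) is recovered, bijectively, from the data $\tau=[g(z_0),[\mathcal{A}],[\Phi]]$. I would argue in two directions: first that the triple is \emph{determined} by $g|_P$, and second that every admissible triple gives rise to a well-defined commuting homomorphism on $P$, and that distinct triples give distinct restrictions.

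For the forward direction: by Lemma~\ref{2} (applied to the image cycle) together with Theorem~\ref{9}, the $f$-cycle $Z$ of length $i$ is mapped by $g$ onto an $f$-cycle, and $g|_Z$ is determined by $g(z_0)$, since $g(z_k)=g(f^k(z_0))=f^k(g(z_0))$ by commutativity. This fixes the first coordinate. Next, for each root $z_k$ consider the full directed tree $\mathcal{T}(z_k)$; Lemma~\ref{4} applied to $T=\mathcal{T}(z_k)$ produces exactly the set $\mathcal{A}_k\subseteq\mathcal{V}(\mathcal{T}(z_k)\setminus z_k)$ of vertices mapped to height $1$, and asserts it is an incomparable vertex subset — this is coordinate two. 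Finally, restricting $g$ to the subtree $\mathcal{T}(x)$ hanging below each $x\in\mathcal{A}_k$: by Lemma~\ref{10}(2), since $\phi(g(x))=1$, every descendant $y\ge x$ has $g(y)$ in the same directed tree of $g(P)$ with $g(y)\ge g(x)$, so $g$ restricts to a homomorphism $\varphi_x=g|_{\mathcal{T}(x)}\in\mathcal{H}om(\mathcal{T}(x),\mathcal{T}(g(x)))$; these are coordinate three. To see that $\tau$ determines $g|_P$ completely, one checks there is nothing left: by Lemma~\ref{4}(1) the vertices \emph{not} lying at-or-below some $x\in\mathcal{A}_k$ are precisely those with $g(x)\in\mathcal{V}(\mathcal{Z}(g(P)))$, and for such a vertex $w$ of height $h$ in $\mathcal{T}(z_k)$, commutativity forces $g(w)=f^{-h}$ applied along the cycle... more precisely $f^h(g(w))=g(f^h(w))=g(z_k)$, so $g(w)$ is the unique cycle preimage at the right distance — uniqueness here uses that a directed cycle is a functional graph so each vertex has a unique incoming edge. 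Hence those values are forced by $g(z_0)$ alone.

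For the reverse direction: given an admissible triple, define $g$ on $\mathcal{V}(Z)$ by $g(z_k):=f^k(g(z_0))$; define $g$ on each subtree $\mathcal{T}(x)$, $x\in\mathcal{A}_k$, by $\varphi_x$; and define $g$ on the remaining "funnel" vertices $w$ (those strictly above no element of $\mathcal{A}_k$, as well as those in $\mathcal{A}_k$ themselves, which have height $1$) by sending a height-$h$ vertex $w\in\mathcal{T}(z_k)$ to $f^{h}(g(z_0))$-preimage walked backwards — concretely, if $g(z_k)$ lies in an $f$-cycle of length $l$, send $w$ to $f^{((-h)\bmod l)}(g(z_k))$ so that $f^h(g(w))=g(z_k)$. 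One then verifies $fg=gf$ at every vertex: on cycle vertices it is immediate; on tree vertices inside some $\mathcal{T}(x)$ it holds because $\varphi_x$ is a graph homomorphism into $\mathcal{T}(g(x))$ and $f$ agrees with the tree's edge relation; at the "transition" vertices $x\in\mathcal{A}_k$ (where $f(x)$ lies on the funnel) one checks $g(f(x))=f(g(x))$ using $\phi(g(x))=1$; and at the top funnel vertices mapping into the cycle, the construction was made precisely so that $f(g(w))=g(f(w))$. Injectivity of $\tau\mapsto g|_P$ is clear since each of the three coordinates can be read back off $g|_P$ by the forward argument.

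\textbf{Main obstacle.} The delicate point is the bookkeeping at the boundary between "tree part" and "cycle part" — showing that once $g(z_0)$ and the incomparable sets $\mathcal{A}_k$ are fixed, the values of $g$ on all vertices of $\mathcal{T}(z_k)$ that are \emph{not} in or below $\mathcal{A}_k$ are \emph{forced} (not free data), and that the forced assignment is consistent with commutativity and with the homomorphisms $\varphi_x$ attached along $\mathcal{A}_k$. This is exactly where Lemma~\ref{10} and the incomparability condition (Lemma~\ref{4}(2)) do the work: incomparability guarantees the subtrees below distinct elements of $\mathcal{A}_k$ are disjoint, so the $\varphi_x$ can be prescribed independently, while the height-tracking of Lemma~\ref{10} guarantees the funnel values are uniquely pinned down by $g(z_0)$. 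Verifying that these pieces glue into a genuine endomorphism — rather than merely a function — is the one spot that needs care rather than routine checking.
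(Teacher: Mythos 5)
Your proposal is correct and takes essentially the same route as the paper: the image of the cycle is pinned down by $g(z_0)$ (as in Theorem~\ref{9}), the sets $\mathcal{A}_k$ arise from Lemma~\ref{4} via the height bookkeeping of Lemma~\ref{10}, the subtrees $\mathcal{T}(x)$, $x\in\mathcal{A}_k$, carry independently chosen homomorphisms giving $[\Phi]$, and the remaining ``funnel'' values are forced into the image cycle by commutativity. You merely make explicit the reconstruction (reverse) direction and the forcedness of the non-tree values, which the paper's proof leaves as a brief sketch.
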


\begin{proof} Considering the action of $g$ on $f$-cycles
we get that the image of $Z$ is an $f$-cycle of length $l$, where
$l|i$. This cycle is bijectively determined by $g(z_{0})$. See
Theorem \ref{9}. Considering images of directed trees
consecutively increasing vertex heights we get $[\mathcal{A}]$.,
use Lemma \ref{4}. For any $k$ and $x\in \mathcal{A}_{k}$ the
subtree $\mathcal{T}(x)$ is independently homomorphically mapped
to $\mathcal{T}(g(x))$ and taking sequences of homomorphisms over
all $k$ and $x$ we get $[\Phi]$.
\end{proof}

\begin{remark} Thus to define an $f$-commuting function on an $f$-pseudocycle we need
\begin{enumerate}
\item to map the $f$-cycle to an $f$-cycle of appropriate length,

\item to define vertices whose inverse images with respect ro $f$
leave the directed cycle (vertices forming the sets
$\mathcal{A}_{k}$) by travelling backwards the edges of directed
trees,

\item to define graph homomorphisms for remaining subtrees.
\end{enumerate}

Note that an $f$-pseudocycle of cycle length $i$ can be mapped by
an $f$-commuting endofunction $g'$ to any $f$-pseudocycle of cycle
length $l$, $l|i$. To prove that the set of commuting functions is
nonempty we can take $g'$, which sends all vertices of positive
height to the $f$-cycle.
\end{remark}

\begin{remark} Given an $f$-commuting function $g$ we can
determine $g$-pseu\-do\-cyc\-les by first considering the
$g$-image of the set of $f$-cycles and then considering $g$-images
of the directed trees of $f$.

\end{remark}

\begin{example}\label{ex7}
Let $S=\{0,...,9\}$ and the $f$-graph be given in Figure 4 below
\newpage

$$
\xymatrix{
5\ar[d]&6\ar[d]&7\ar[dl]&&\\
4\ar[d]&1\ar[dr]&8\ar[d]&9\ar[dl]\\
0\ar[ur]&&2\ar[dl]& \\
&3\ar[ul]&&\\ }
$$
\begin{center}
\

Fig.4.  - the $f$-graph for Example \ref{ex7}
\end{center}

Let us find $|\mathcal{C}(f)|$. There is one $f$-cycle (0,1,2,3),
therefore $|\mathcal{C}(f)|=\sum\limits_{i=0}^{3}N_{0i}$, where
$N_{0i}$ is the number of $f$-commuting functions sending $0$ to
$i$. Furthermore, $N_{0i}=\prod\limits_{j=0}^{3}M_{ij}$, where
$M_{ij}$ is the number of possible ways to map the directed tree
$T_{j}$ (the full directed tree with root $j$) if $0$ is mapped to
$i$. Below we give the table for $M_{ij}$.

\begin{center}\begin{tabular}{ r|c|c|c|c| } \multicolumn{1}{r}{}{}
 &  \multicolumn{1}{c}{$j=0$}
 & \multicolumn{1}{c}{$j=1$}&\multicolumn{1}{c}{$j=2$}& \multicolumn{1}{c}{$j=3$}\\
\cline{2-5}
$i=0$ & $2$ & $3^2$ &$3^2$&$1$\\
\cline{2-5}
$i=1$ & $2$ & $3^2$ &$1^2$&$1$\\
\cline{2-5}
$i=2$ & $3$ & $1$ &$2^2$&$1$\\
\cline{2-5}
$i=3$ & $3$ & $2^2$ &$3^2$&$1$\\
\cline{2-5}
\end{tabular}

\

\

Fig.4.  - table of $M_{ij}$ values for Example \ref{ex7}
\end{center}

We have that $|\mathcal{C}(f)|=2\cdot 3^2\cdot 3^2+2\cdot
3^2+3\cdot 2^2+3\cdot 2^2\cdot 3^2=300$.
\end{example}

We now describe an enumerative combinatorics result - a formula
for counting graph homomorphisms between two pseudocycles.

\begin{theorem}\label{34}
Let $P=(T_{0},...,T_{m-1})$ be a pseudocycle of cycle length $m$
with the directed cycle $Z=(z_{0},...,z_{m-1})$. Let
$P'=(T'_{0},...,T'_{l-1})$ be a pseudocycle of cycle length $l$
and the directed cycle $(z'_{0},...,z'_{l-1})$, where $l|m$. Let a
graph homomorphism $g:P\rightarrow P'$ be defined by the triple
$[g(z_{0}), [\mathcal{A}], [\Phi]]$ as in Theorem \ref{11}.

Then

\begin{enumerate}

\item
$\mathcal{H}om(P,P')=\bigsqcup\limits_{k=0}^{l-1}\mathcal{H}om(z_{0},z'_{k})$,
where $\mathcal{H}om(z_{0},z'_{k})$ is the set of graph
homomorphisms $P\rightarrow P'$ sending $z_{0}$ to $z'_{k}$;

\item $Hom(z_{0},z'_{k})=\bigsqcup\limits_{[\mathcal{A}]\in
Inc(P)}\mathcal{H}om(z_{0},z'_{k},[\mathcal{A}])$, where
$\mathcal{H}om(z_{0},z'_{k},[\mathcal{A}])$ is the set of graph
homomorphisms $P\rightarrow P'$ sending $z_{0}$ to $z'_{k}$ with
the sequence of incomparable vertex subsets $[\mathcal{A}]$, the
disjoint union is taken over $Inc(P)$ - all possible choices of
sequences of incomparable subsets $[\mathcal{A}]$;

\item
$\mathcal{H}om(z_{0},z'_{k},[\mathcal{A}])=\bigotimes\limits_{i=0}^{m-1}\bigotimes\limits_{h\ge
1}\bigotimes\limits_{x\in
\mathcal{A}_{i}(h)}\mathcal{H}om(\mathcal{T}(x),\widetilde{T}'_{i+k-h+1(mod\
l)})$

\item
$|\mathcal{H}om(P,P')|=\sum\limits_{k=0}^{l-1}|\mathcal{H}om(z_{0},z'_{k})|=
\sum\limits_{k=0}^{l-1}\sum\limits_{[\mathcal{A}]\in
Inc(P)}|\mathcal{H}om(z_{0},z'_{k},[\mathcal{A}])|=$

$= \sum\limits_{k=0}^{l-1}\sum\limits_{[\mathcal{A}]\in Inc(P)}
\prod\limits_{i=0}^{m-1}\prod\limits_{h\ge 1}\prod\limits_{x\in
\mathcal{A}_{i}(h)}|\mathcal{H}om(\mathcal{T}(x),\widetilde{T}'_{i+k-h+1(mod\
l)})|$

\end{enumerate}
\end{theorem}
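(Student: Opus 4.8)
The plan is to decompose the set $\mathcal{H}om(P,P')$ successively by finer and finer invariants, matching the three-part data $[g(z_0),[\mathcal{A}],[\Phi]]$ from Theorem~\ref{11}, and then to observe that once the combinatorial skeleton is fixed the remaining freedom is exactly an independent choice of subtree homomorphisms, so the count becomes a product. First I would establish part~1: since $Z=(z_0,\dots,z_{m-1})$ is the unique directed cycle of $P$, any homomorphism $g:P\to P'$ must carry $Z$ into $\mathcal{Z}(P')$, and by Lemma~\ref{2} (images of directed cycles) $g(z_0)$ lies on the $f$-cycle of $P'$; since that cycle has length $l$, there are exactly $l$ choices $z'_0,\dots,z'_{l-1}$ for $g(z_0)$, and these choices are mutually exclusive, giving the disjoint union $\mathcal{H}om(P,P')=\bigsqcup_{k=0}^{l-1}\mathcal{H}om(z_0,z'_k)$. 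Part~2 is then immediate from Lemma~\ref{4}: every $g$ with $g(z_0)=z'_k$ determines a well-defined sequence $[\mathcal{A}]=[\mathcal{A}_0,\dots,\mathcal{A}_{m-1}]$ of incomparable vertex subsets (one per directed tree $T_i$), and conversely different $g$'s with the same $g(z_0)$ but different $[\mathcal{A}]$ are distinct, so $\mathcal{H}om(z_0,z'_k)$ splits as a disjoint union over $[\mathcal{A}]\in Inc(P)$.

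The heart of the argument is part~3, the identification of $\mathcal{H}om(z_0,z'_k,[\mathcal{A}])$ with the Cartesian product $\bigotimes_i\bigotimes_{h\ge1}\bigotimes_{x\in\mathcal{A}_i(h)}\mathcal{H}om(\mathcal{T}(x),\widetilde{T}'_{i+k-h+1\,(mod\ l)})$. Here I would argue as follows. Fix $k$ and $[\mathcal{A}]$. By Lemma~\ref{4}(1) the vertices of positive $g$-height in $T_i$ are exactly those lying above some $x\in\mathcal{A}_i$ in the tree order, so $g$ is completely pinned down on $\mathcal{V}(T_i)$ once we know, for each $x\in\mathcal{A}_i$, the restriction $\varphi_x=g|_{\mathcal{T}(x)}$ — a graph homomorphism from $\mathcal{T}(x)$ into $P'$; all other vertices of $T_i$ (those not above any element of $\mathcal{A}_i$) are forced onto the cycle $\mathcal{Z}(P')$ by commutativity and the specification of $g(z_0)$. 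It then remains to pin down the target: if $x\in\mathcal{A}_i(h)$ (distance $h$ from the root $z_i$ in $T_i$), then, walking the $h$ edges of the directed path from $x$ down to $z_i$ and using $g(z_i)=z'_{i+k\,(mod\ l)}$ together with $g\circ f=f\circ g$, one finds $g(x)$ sits at height $1$ hanging off the cycle vertex $z'_{i+k-h+1\,(mod\ l)}$ — this is the index bookkeeping, and it is exactly why the $\widetilde{T}'_{i+k-h+1(mod\ l)}$ (the root-truncated tree) appears: $\varphi_x$ sends the root $x$ into $\mathcal{D}_1$ of that tree. Since the subtrees $\{\mathcal{T}(x):x\in\bigsqcup_i\mathcal{A}_i\}$ are pairwise vertex-disjoint (incomparability inside each $\mathcal{A}_i$ and disjointness across the $T_i$), the choices of the $\varphi_x$ are completely independent, and conversely any such tuple glues to a bona fide homomorphism $P\to P'$ with the prescribed $[g(z_0),[\mathcal{A}]]$; this bijection is the assertion of part~3. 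Part~4 is then a purely formal consequence: take cardinalities through the disjoint unions of parts~1--2 and the Cartesian product of part~3, turning $\bigsqcup$ into $\sum$ and $\bigotimes$ into $\prod$.

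The main obstacle I anticipate is the careful verification of the index shift $i+k-h+1\pmod l$ in part~3 — making sure the modular arithmetic lines up with how the directed tree $T_i$ hangs off cycle vertex $z_i$, how $g$ translates cycle positions by $k$, and how traversing $h$ tree edges downward corresponds to $h-1$ forward steps along the image cycle after landing at height $1$. A secondary subtlety is checking that the gluing of independently chosen $\varphi_x$'s is indeed a graph homomorphism globally, i.e. that there is no hidden compatibility constraint at the cycle vertices between the "forced onto the cycle" vertices and the subtree images; this follows from commutativity (Lemma~\ref{10}) but deserves an explicit sentence. Everything else — the two disjoint-union decompositions and the passage to cardinalities — is routine once Theorem~\ref{11} is in hand.
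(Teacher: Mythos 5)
Your route is the same as the paper's: the paper also proves this by splitting $\mathcal{H}om(P,P')$ first by the image of $z_{0}$, then by the sequence $[\mathcal{A}]$, then identifying the fibre with an independent choice of subtree homomorphisms and finishing with the sum and product rules; your write-up is in fact more explicit than the paper's two-line argument on the index shift $i+k-h+1\pmod l$, on why the remaining tree vertices are forced onto the cycle, and on the pairwise disjointness of the subtrees $\mathcal{T}(x)$. (A small slip: the fact that a directed cycle maps onto a directed cycle of dividing length in this general setting is Theorem \ref{9}, not Lemma \ref{2}, which assumes $f$ bijective.)

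There is, however, a genuine problem at the final identification, and your own wording exposes it. You correctly deduce that for $x\in\mathcal{A}_{i}(h)$ the map $\varphi_{x}$ must send the root $x$ into $\mathcal{D}_{1}$ of $T'_{i+k-h+1}$, because $g(f(x))$ is the cycle vertex $z'_{i+k-h+1}$ and the edge $[x,f(x)]$ must be preserved. Hence the set your argument actually puts in bijection with the fibre is the set of tuples of homomorphisms $\mathcal{T}(x)\rightarrow\widetilde{T}'_{i+k-h+1}$ sending $x$ to a root of that forest, which is in general a proper subset of the full product $\bigotimes\mathcal{H}om(\mathcal{T}(x),\widetilde{T}'_{i+k-h+1})$ appearing in statement 3; an element of the unrestricted Hom-set may send $x$ to a vertex of height $\ge 2$, and then the glued map fails to be a homomorphism precisely at the edge $[x,f(x)]$, so the converse you claim for ``any such tuple'' does not hold, and commutativity (Lemma \ref{10}) does not rescue it. This is not pedantry: in the graph of Example \ref{ex7} with $g(0)=2$, take $x=8\in\mathcal{A}_{2}(1)$; the target is $\widetilde{T}_{0}$ with vertex set $\{4,5\}$, so $|\mathcal{H}om(\mathcal{T}(8),\widetilde{T}_{0})|=2$, yet only $g(8)=4$ extends to a commuting function (the choice $g(8)=5$ violates $f(g(8))=g(f(8))=0$), and the unrestricted product formula would give $27$ for $N_{02}$ instead of the correct value $12$ computed in Examples \ref{ex7} and \ref{ex8}. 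So to close the argument you must either prove statement 3 with the root-compatible Hom-sets (homomorphisms sending the root of $\mathcal{T}(x)$ into $\mathcal{D}_{1}(T'_{i+k-h+1})$), which is what your analysis really establishes and what the example computes, or supply the missing converse for arbitrary Hom-elements, which is false. The paper's own proof is equally silent on this point, so your attempt is no worse than the original, but as a proof of the displayed formula it leaves this gap, and the statement itself needs the restricted reading of $\mathcal{H}om$.
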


\begin{proof}
1. A homomorphism $P\rightarrow P'$ sends $z_{0}$ to $Z'$.

2. A homomorphism $P\rightarrow P'$ determines the sequence
$[\mathcal{A}]=[\mathcal{A}_{0},...,\mathcal{A}_{m-1}]$ uniquely.

3. A homomorphism $g:P\rightarrow P'$ is uniquely determined by
the sequence $[g|_{T_{0}},...,g|_{T_{m-1}}]$. Restrictions of
homomorphisms to directed trees of $P$ can be chosen
independently. Furthermore, for any $i$ this restriction
$g|_{T_{i}}$ is determined by
$\mathcal{A}_{i}=[\mathcal{A}_{i}(1),\mathcal{A}(2),...]$. For any
$x\in \mathcal{A}_{i}(h)$ the subtree $\mathcal{T}(x)$ is
independently homomorphically mapped to the root-truncated
directed tree $\widetilde{T'}_{i+k-h+1 (mod\ l)}$. The $-h+1$ term
in the index corresponds to travelling along the cycle backwards
$h-1$ steps.

4. The formula follows counting elements of $\mathcal{H}om(P,P')$
using the statement 3 of this theorem, applying the sum rule and
the product rule.
\end{proof}

\begin{remark} Summation variables in statement 4 of Theorem \ref{34}
can be swapped.

\end{remark}

\begin{example} \label{ex8}

Consider again the graph of Example \ref{ex7}. The vertex sets of
root-truncated full directed trees are:
$V_{0}=\mathcal{V}(\widetilde{T_{0}})=\{4,5\}$, $V_{1}=\{6,7\}$,
$V_{2}=\{8,9\}$, $V_{3}=\emptyset$. The incomparable vertex
subsets are: $Inc(T_{0})=2^{V_{0}}\backslash\ V_{0}$,
$Inc(T_{1})=2^{V_{1}}$, $Inc(T_{2})=2^{V_{2}}$,
$Inc(T_{3})=\emptyset$. $Inc(P)=Inc(T_{0})\times Inc(T_{1})\times
Inc(T_{2})\times Inc(T_{3})$.

In terms of Example \ref{ex7} and Theorem \ref{34} we have that
$$N_{0i}=\sum\limits_{\mathcal{[A]}\in
Inc(P)}|\mathcal{H}om(0,i,[\mathcal{A}])|.$$ Let us check using
Theorem \ref{34} that $N_{02}=12$ which coincides with the
computation in Example \ref{ex7}. Nonzero contributions can be
given by incomparable subsets $\emptyset,\{5\}$ of $Inc(T_{0})$,
subset $\emptyset$ of $Inc(T_{1})$, subsets
$\emptyset,\{8\},\{9\},\{8,9\}$ of $Inc(T_{2})$ and subset
$\emptyset$ of $Inc(T_{3})$. We have eight sequences of
incomparable vertex subsets: four sequences each contributing $1$
and four sequences each contributing $2$ (for example,
$[\{5\},\emptyset,\{8\}]$), thus $N_{02}=12$.

\end{example}

We finish this subsection with a few results in related extremal
combinatorics.

\paragraph{Minimal centralizer}\

Let $U_{m,t}$ be a weakly indecomposable directed graph isomorphic
to a functional graph having one directed cycle of length $m$ and
one directed tree which is a directed path of $t$ vertices outside
the cycle. See Fig.5 for an example.

\begin{center}$$
\xymatrix{
&&&\bullet\ar[r]&\bullet\ar[r]&\bullet\ar[dl]\\
\bullet\ar[r]&\bullet\ar[r]&\bullet\ar[r]&\bullet\ar[r]&\bullet\ar[ul]&\\
}
$$

Fig.5.  - the graph $U_{4,4}$
\end{center}

\begin{proposition} Let $S$ be a finite set, $|S|=n$. Then

\begin{enumerate}

\item $\min\limits_{f\in \mathcal{F}un(S)}|\mathcal{C}(f)|=n$;

\item $|\mathcal{C}(f)|=n$ iff $\Gamma(f)\simeq U_{m,n-m}$ with
$1\le m\le n$ or $\Gamma(f)\simeq Z_{1}\cup U_{m,n-1-m}$ with
$1\le m\le n-1$.

\end{enumerate}

\end{proposition}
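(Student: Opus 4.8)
The plan is to establish the lower bound $|\mathcal{C}(f)|\ge n$ for arbitrary $f\in\mathcal{F}un(S)$ with $|S|=n$, and then to characterize exactly when equality holds. For the lower bound, the key observation is that $\mathcal{C}(f)$ always contains at least $n$ distinct functions: the constant-like retractions onto the cycle together with iterates of $f$ supply enough commuting functions. More precisely, let $Z$ be the union of all $f$-cycles and let $r\colon S\to \mathcal{V}(Z)$ be the ``eventual image'' retraction sending each $x$ to the unique cycle vertex reachable from it (this equals $f^{N}$ for $N$ large, hence commutes with $f$). Composing $r$ with powers of $f$ restricted suitably, or more simply observing that $f^{0},f^{1},\dots$ eventually cycle while the ``collapse everything to a single chosen cycle point in a fixed cycle'' maps also commute, I would exhibit $n$ distinct elements. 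The cleanest route: if $\Gamma(f)$ has a cycle of length $m$ and a tail-path structure, the functions $g_k$ for $k=0,\dots,n-1$ defined by first applying the retraction $r$ onto the cycle and then rotating the cycle by $k$ steps are $f$-commuting, and the map collapsing a whole pseudocycle and shifting accounts for the rest — I expect one needs $U_{m,n-m}$'s own centralizer to have size exactly $n$ as the base case to calibrate the count.

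Next I would compute $|\mathcal{C}(f)|$ for the two candidate graphs. For $\Gamma(f)\simeq U_{m,n-m}$: there is a single weakly connected component whose cycle has length $m$ and which carries a single directed path of $n-m$ tree-vertices. Using Theorem \ref{11} (or Theorem \ref{34} with $P=P'=U_{m,n-m}$), a commuting endomorphism $g$ is determined by $[g(z_0),[\mathcal{A}],[\Phi]]$; since the directed tree attached at one cycle vertex is a path, the incomparable subsets and the tree-homomorphisms are highly constrained — a homomorphism from a path into the truncated path either embeds it as an initial segment or collapses it forward onto the cycle. One finds that once $g(z_0)$ is fixed (there are $m$ choices for the image cycle vertex, but the image cycle is forced to be the whole length-$m$ cycle since $m\mid m$), the only freedom is where along its path the ``escape point'' $\mathcal{A}$ sits, and this contributes the remaining $n-m$ possibilities, giving $m+(n-m)=n$ total. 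For $\Gamma(f)\simeq Z_1\cup U_{m,n-1-m}$: the isolated fixed point $Z_1$ must map (by Lemma \ref{2}/Theorem \ref{9}) to an $f$-cycle whose length divides $1$, i.e.\ to a fixed point; the only fixed point available in the other component is present iff $m=1$, otherwise $Z_1$ must map into itself, contributing a factor $1$; and the $U_{m,n-1-m}$ component independently contributes $n-1$ by the previous computation, again totalling $n$ — with a short case check that no cross-component maps arise because the two components are non-isomorphic and have no common cyclic sub-structure.

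For the converse — that these are the only graphs achieving $|\mathcal{C}(f)|=n$ — I would argue by accumulating forced contributions. Suppose $|\mathcal{C}(f)|=n$. First, $\mathcal{C}_{bij}(f)\subseteq\mathcal{C}(f)$, so by Proposition \ref{23} either $\mathcal{C}_{bij}(f)=\{id\}$ or we already get extra structure; combined with Theorem \ref{34}'s multiplicative/additive formula, each weakly connected component contributes a factor, and distinct isomorphic components or nontrivial tree automorphisms or index $>1$ all inflate the count. So we may assume all components are pairwise non-isomorphic, each has index $1$, and each attached directed tree is rigid. A rigid directed tree rooted on a cycle that also admits only the ``collapse to cycle'' and ``identity-ish'' homomorphisms into itself must be a directed path (any branching or any ``wide'' level creates additional independent homomorphism choices, pushing the product over $n$); similarly having two or more components, or a component with more than one attached tree, multiplies separate $\ge 2$-sized factors unless all but one factor equals $1$, and a factor-$1$ component can only be a single fixed point or a bare cycle of length $1$ — but a bare $Z_m$ with $m\ge2$ has centralizer of size $m\ge 2$, so the only size-$1$-but-nontrivial situation is $Z_1$. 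Assembling: exactly one ``big'' component which is therefore $U_{m,n-m}$, possibly plus one $Z_1$ giving $Z_1\cup U_{m,n-1-m}$.

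The main obstacle I anticipate is the tree-homomorphism bookkeeping in the converse: pinning down precisely that ``rigid directed tree whose self-homomorphisms, when plugged into Theorem \ref{34}, contribute no extra multiplicative factor beyond the escape-point count'' forces the tree to be a path. One must carefully separate the contribution of the cycle-rotation (the $m$ in $\sum_{k}$) from the contribution of the incomparable-subset choices $[\mathcal{A}]$ and show that any leaf beyond a single path, or any vertex of out-degree forcing two independent subtree-homomorphism slots, yields a strictly larger sum — this is an extremal estimate on $|\mathcal{H}om(\mathcal{T}(x),\widetilde{T}')|$ analogous to the inequality $\prod n_i>\sum n_i$ used in Proposition \ref{24}, and getting the edge cases ($m=1$, tiny $t$, the $Z_1\cup U_{1,\cdot}$ overlap) right will require care.
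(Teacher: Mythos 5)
Your overall strategy (compute $|\mathcal{C}(f)|$ exactly for the two candidate graphs, prove the uniform lower bound $n$, and show any deviation inflates the count) is the same as the paper's, but the two steps carrying the real weight are not actually established. First, the general lower bound: the $n$ commuting maps you exhibit (the eventual-image retraction $r=f^{N}$ followed by cycle rotations) are nothing but powers of $f$, and the number of distinct powers is only the maximal tree height plus the $\mathrm{lcm}$ of the cycle lengths, which is far less than $n$ in general --- $f=\mathrm{id}$ has a single power, and a cycle of length $m$ with $t$ height-one leaves has only $m+1$ powers. The paper's indecomposable bound uses a different family: besides the $m$ ``collapse everything onto the cycle and rotate'' maps, one extra endomorphism \emph{per tree vertex} $x$ of height $h$, rotating the cycle $h-1$ steps and letting $x$ alone escape to height $1$, i.e.\ $[\mathcal{A}]=[\dots,\emptyset,\{x\},\emptyset,\dots]$. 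You use this escape idea only inside the $U_{m,t}$ computation, where the tree is a path; for general trees it is exactly what is missing. Second, for disconnected $\Gamma(f)$ the centralizer is \emph{not} a product over components: components map into other components (any component collapses onto any fixed point, a $km$-cycle maps onto an $m$-cycle elsewhere), and when fixed points are present the naive product of component contributions can fall below $n$ (again $f=\mathrm{id}$). The paper substitutes the estimate $|\mathcal{C}(f)|\ge n_{0}!\,n_{1}\cdots n_{k}+n_{0}k$ together with the product-versus-sum argument of Proposition \ref{24}/\ref{41}; your sketch has no replacement for this.

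The cross-component issue also undermines your verification of the second extremal family and your converse reduction. For $Z_{1}\cup U_{m,n-1-m}$ you assert ``no cross-component maps arise'' and tally $1\cdot(n-1)$; in fact the big component can collapse onto the isolated fixed point (since $1\mid m$), and it is precisely this extra map that brings the total to $(n-1)+1=n$ when $m\ge 2$, whereas for $m=1$ the isolated fixed point can also be sent to the loop of the other component and a direct count gives $2n$ --- so the boundary case cannot be waved through (your argument does not detect it, and it is in fact a problem for the statement as printed). Finally, the reduction ``we may assume every component has index $1$ and rigid trees'' is unjustified: index $>1$ or nontrivial tree automorphisms force $\mathcal{C}_{bij}(f)$ beyond the identity, but not $|\mathcal{C}(f)|$ beyond $n$ --- $Z_{n}=U_{n,0}$ has index $n$ and is extremal, so this reduction discards admissible extremal graphs. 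The paper's converse instead argues directly that a non-path tree, a second tree, isomorphic components, or $n_{0}\ge 2$ fixed points each yield explicit extra endomorphisms, finishing with a case analysis on $n_{0}\in\{0,1,\ge 2\}$; some version of that concrete construction is what your extremal estimate on $|\mathcal{H}om(\mathcal{T}(x),\widetilde{T}')|$ would still have to supply.
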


\begin{proof} 1. First we prove that $|\mathcal{C}(f)|=n$, if $\Gamma(f)$ is weakly indecomposable.
If $\Gamma(f)\simeq Z_{n}$, then $\mathcal{C}(f)=n$. We show that
$\mathcal{C}(f)\ge n$, if $\Gamma(f)$ is weakly indecomposable. If
$\Gamma(f)$ has $t$ vertices outside the directed cycle, $t<n$,
then there are at least $(n-t)+n=n$ endomorphisms: 1) $n-t$
endomorphisms which rotate the cycle and map all tree vertices to
the cycle, 2) for each tree vertex $x$ of height $h$ there is at
least one endomorphism rotating the cycle $h-1$ steps forward with
$[\mathcal{A}]$ having one nonempty element $\{x\}$, e.g.
$[\mathcal{A}]=[...,\emptyset,\{x\},\emptyset,...]$. Therefore
$\mathcal{C}(f)=n$, if $\Gamma(f)$ is weakly indecomposable.

Let $\Gamma(f)$ have $k$ weakly connected components, $k\ge 0$,
having $n_{1},...,n_{k}$ vertices, $n_{i}>1$, and $n_{0}$ trivial
components (of vertex size $1$), $n_{0}\ge 0$. The set
$\Gamma(f)$-endomorphisms permuting the trivial components and
mapping independently each of the $k$ nontrivial components to
itself or to trivial components is a subset of
$\mathcal{E}nd(\Gamma(f))$. The $i$th component can be mapped to
itself in at most $n_{i}$ ways. We have that $|\mathcal{C}(f)|\ge
n_{0}!\cdot n_{1}...n_{k}+n_{0}k$. By arguments used in
Proposition \ref{41} it is shown that $|\mathcal{C}(f)|\ge
n_{0}+n_{1}+...+n_{k}=n$.

2. If $\Gamma(f)\simeq U_{m,n-m}$ with $1\le m\le n$ or
$\Gamma(f)\simeq Z_{1}\cup U_{m,n-1-m}$ with $1\le m\le n-1$, then
we directly check that $\mathcal{C}(f)=n$.

We have to prove the other implication.

It was just proved that $|\mathcal{C}(f)|\ge n$. If $\Gamma(f)$ is
weakly indecomposable with more than one directed tree or a
directed tree which is not a path, then by direct analysis it can
be shown that at least one new endomorphism can be constructed,
thus $|\mathcal{E}nd(\Gamma(f))|=|\mathcal{C}(f)|> n$.

Let $\Gamma(f)$ have $k$ weakly connected nontrivial components
$\Gamma_{1}$,...,$\Gamma_{k}$, $k\ge 0$, having $n_{1},...,n_{k}$
vertices, $n_{i}>1$, and $n_{0}$ trivial components (of vertex
size $1$), $n_{0}\ge 0$. Then
$|\mathcal{C}(f)|=|\mathcal{E}nd(\Gamma(f))|\ge
n_{0}!\prod\limits_{i=1}^{k}|\mathcal{E}nd(\Gamma_{i})|+n_{0}k$.
If for at least one $i$ $\Gamma_{i}\not\simeq U_{m,t}$, then
$|\mathcal{C}(f)|>n_{0}!\cdot n_{1}...n_{k}+n_{0}k\ge n$. Suppose
that $\Gamma_{i}\simeq U_{m_{i},t_{i}}$ for some $m_{i},t_{i}$,
for any $i$.

\subparagraph{Case $n_{0}=0$} As in Proposition \ref{41} it is
shown that $\mathcal{C}(f)>n_{1}+...+n_{k}$, if $k\ge 2$. If
$k=1$, then we must have $\Gamma(f)\simeq U_{m,t}$ for some $m,t$.

\subparagraph{Case $n_{0}=1$} Again
$\mathcal{C}(f)>n_{0}+n_{1}+...+n_{k}+k> n$, if $k\ge 2$. If
$k=1$, then we must have $\Gamma(f)\simeq Z_{1}\cup U_{m,t}$ for
some $m,t$.

\subparagraph{Case $n_{0}\ge 2$} In this case $|\mathcal{C}(f)|\ge
n_{0}!\cdot n_{1}...n_{k}+n_{0}k\ge n_{0}n_{1}...,n_{k}+n_{0}k$.
If $k\ge 1$, then $|\mathcal{C}(f)|\ge
n_{0}+n_{1}+...+n_{k}+n_{0}k>n$. If $k=0$ and $n_{0}\ge 3$, then
$|\mathcal{C}(f)|=n_{0}!>n_{0}=n$. The cases $n_{0}\in \{1,2\}$
were considered earlier. Thus in this case $|\mathcal{C}(f)|>n$
for all $f$.
\end{proof}

\paragraph{Maximal centralizer}\

Now we consider functions with maximal centralizers. Without extra
conditions on $f\in \mathcal{F}un(S)$ the problem of finding
$\max\limits_{f\in \mathcal{F}un(S)}|\mathcal{C}(f)|$ is trivial,
the identity function commutes with any endofunction. We impose a
condition on $\Gamma(f)$ - let it be weakly indecomposable.

Let $W_{m,t}$ be a weakly indecomposable directed graph isomorphic
to a functional graph having one directed cycle of length $m$ and
$t$ proper tree vertices having height $1$ and adjacent to one
cycle vertex. See Fig.6 for an example.

\begin{center}$$
\xymatrix{
\bullet\ar[dr]&\bullet\ar[d]&\bullet\ar[dl]&\\
\bullet\ar[r]&\bullet\ar[r]&\bullet\ar[r]&\bullet\ar[d] \\
\bullet\ar[ur]&\bullet\ar[u]&\bullet\ar[l]& \bullet\ar[l]\\
 }
$$

Fig.5.  - the graph $W_{6,5}$
\end{center}

\begin{proposition}\label{44} Let $S$ be a finite set, $|S|=n$. Let
$Z_{m}(S)$ be the set of $S$-endofunctions having one directed
cycle of length $m$, let $t=n-m$. Then

\begin{enumerate}

\item $\max\limits_{f\in Z_{m}(S)}|\mathcal{C}(f)|=m-1+(t+1)^t$;

\item $|\mathcal{C}(f)|=m-1+(t+1)^t$ iff $\Gamma(f)\simeq
W_{m,t}$.

\end{enumerate}
\end{proposition}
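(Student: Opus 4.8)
The plan is to use $\mathcal{C}(f)=\mathcal{E}nd(\Gamma(f))$ together with the fact that, since $f\in Z_m(S)$, the graph $\Gamma(f)$ has a single directed cycle $Z=(z_0,\ldots,z_{m-1})$, hence is one weakly connected $f$-pseudocycle; write $T_i$ for the full directed tree rooted at $z_i$ and $a_i=|\mathcal{V}(\widetilde{T}_i)|$, so $\sum_i a_i=t$. Since $Z$ is the only directed cycle, any endomorphism restricts on $Z$ to a rotation $z_i\mapsto z_{i+k}$. We may assume $t\ge 2$, as $t\le 1$ forces $\Gamma(f)\simeq W_{m,t}$ and the statement is then immediate. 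First I would prove the ``if'' half of part~2 (hence the lower bound in part~1) by a direct count on $W_{m,t}$ via Theorem~\ref{34} with $P=P'=\Gamma(f)$: here $T_0$ is a star with $t$ leaves and every other $T_i$ is a single vertex, so the only nontrivial datum in $[\mathcal{A}]$ is $\mathcal{A}_0$, running over all $2^t$ subsets of the $t$ leaves; each leaf subtree $\mathcal{T}(x)$ is a single vertex, $\widetilde{T}_0$ has $t$ vertices and $\widetilde{T}_j=\emptyset$ for $j\ne 0$. For the rotation $k=0$ the relevant target tree is $\widetilde{T}_0$, giving contribution $\sum_{A}t^{|A|}=(1+t)^{t}$; for $k\ne 0$ the target tree is empty, forcing $\mathcal{A}_0=\emptyset$ and contribution $1$. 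Summing over $k$ gives $|\mathcal{C}(f)|=(t+1)^{t}+(m-1)$.

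For the upper bound I would first treat the \emph{flat} case, where every tree vertex has height $1$, so $T_i$ is a star with $a_i=d_i-1$ leaves and $d_i=|f^{-1}(z_i)|=1+a_i$. An endomorphism rotating $Z$ by $k$ sends each leaf of $T_i$ into $f^{-1}(z_{i+k})$ independently, so
$$|\mathcal{C}(f)|=\sum_{k=0}^{m-1}\prod_{i=0}^{m-1}d_{i+k}^{\,a_i}=\sum_{k=0}^{m-1}\prod_{i=0}^{m-1}(a_{i+k}+1)^{a_i}.$$
By the weighted AM--GM inequality, with $b_k=\sum_i a_i a_{i+k}$, each inner product is at most $\left(\frac{1}{t}\sum_i a_i(a_{i+k}+1)\right)^{t}=(1+b_k/t)^{t}$. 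Now $\sum_{k}b_k=t^{2}$, $b_0=\sum_i a_i^{2}\ge\sum_i a_i=t$, and $x\mapsto(1+x/t)^{t}$ is strictly convex (as $t\ge 2$), so $\sum_k(1+b_k/t)^{t}$ over the polytope $\{\sum_k b_k=t^{2},\ b_0\ge t,\ b_k\ge 0\}$ is maximized at a vertex: either $b=(t^{2},0,\ldots,0)$, giving $(1+t)^{t}+(m-1)$, or $b=(t,t^{2}-t,0,\ldots,0)$ up to relabelling, giving $2^{t}+t^{t}+(m-2)$; since $(1+t)^{t}-t^{t}\ge t^{t}\ge 2^{t}>2^{t}-1$, the former strictly dominates. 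Hence $|\mathcal{C}(f)|\le(1+t)^{t}+(m-1)$ for flat $f$, with equality forcing $b=(t^{2},0,\ldots,0)$, i.e.\ $(\sum_i a_i)^{2}=\sum_i a_i^{2}$, i.e.\ exactly one $a_i$ equals $t$, i.e.\ $\Gamma(f)\simeq W_{m,t}$.

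It then remains to reduce an arbitrary $f\in Z_m(S)$ to the flat case, and this is the main obstacle. The plan is a ``lifting'' operation: if some tree has a vertex $x$ of height $\ge 2$ with parent $p=f(x)$, reattach the subtree $\mathcal{T}(x)$ from $p$ to $f(p)$; this keeps $\Gamma(\cdot)\in Z_m(S)$, strictly decreases the total tree-vertex height, and—this is the point to establish—does not decrease $|\mathcal{E}nd(\Gamma(f))|$, so iterating ends at a flat graph with at least as many (and strictly more, when $x$ existed) endomorphisms. Concretely one compares the expansions $\sum_k\prod_i F(T_i,z_{i+k})$, where $F(T,w)=\prod_{c}\bigl(\sum_{v\in f^{-1}(w)}F(\mathcal{T}(c),v)\bigr)$, the product over the children $c$ of the root of $T$, counts homomorphisms of a rooted directed tree into $\Gamma(f)$ sending the root to $w$; the lift enlarges the preimage set of $f(p)$ while shrinking that of $p$, and its net effect on the full double sum must be shown nonnegative. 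The difficulty is that flattening is \emph{not} monotone summand by summand—for a rotation $k\ne 0$ a deep subtree can make $\prod_i F(T_i,z_{i+k})$ strictly larger than after flattening (whereas for $W_{m,t}$ every $k\ne 0$ summand equals $1$), the loss being absorbed only by the much larger $k=0$ summand of the flat graph—so one needs a global argument, for instance an injection from $\mathcal{E}nd$ of the original graph into $\mathcal{E}nd$ of the lifted one trading endomorphisms gained at one rotation against those lost at another, or a direct upper bound for $\sum_k\prod_i F(T_i,z_{i+k})$ that never passes through the flat model. Granted the reduction, the flat-case bound yields part~1, and tracking equality through the reduction together with the flat analysis yields the ``only if'' of part~2.
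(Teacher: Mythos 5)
Your verification that $|\mathcal{C}(f)|=m-1+(t+1)^t$ when $\Gamma(f)\simeq W_{m,t}$ is correct, and your flat-case analysis is sound: the formula $\sum_{k}\prod_{i}(a_{i+k}+1)^{a_i}$, the weighted AM--GM step, and the maximization of the convex function over the polytope $\{\sum_k b_k=t^2,\ b_0\ge t,\ b_k\ge 0\}$ do give the bound, with equality only when one $a_i$ equals $t$. But the proof is incomplete exactly where you flag it: you never establish the reduction of an arbitrary $f\in Z_m(S)$ to the flat case. The assertion that the lifting move (reattaching a subtree from its parent to the grandparent) never decreases $|\mathcal{E}nd(\Gamma(f))|$ is the entire content of the upper bound for non-flat graphs, and you only name two possible strategies (a rotation-trading injection, or a direct bound bypassing the flat model) without carrying either out; as you yourself observe, the comparison genuinely fails summand by summand, so no local argument closes it. Since part 1's upper bound and the ``only if'' half of part 2 both rest on this unproven step (and part 2 additionally requires tracking equality through the reduction), the proposal as it stands does not prove the proposition.

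The paper avoids flattening altogether. It fixes $w$ on the same vertex set with $\Gamma(w)\simeq W_{m,t}$ ($w=f$ on the cycle vertices $S_Z$, $w\equiv 0$ on the tree vertices $S_T$) and constructs an injection $\delta:\mathcal{C}(f)\rightarrow\mathcal{C}(w)$ directly, splitting each $g\in\mathcal{C}(f)$ according to whether $g(S_T)\cap S_T=\emptyset$ (in which case commutativity forces $g$ to be determined by its rotation of the cycle, and $\delta(g)$ is the corresponding ``rotate and collapse'' endomorphism of $\Gamma(w)$) or not (in which case $\delta(g)$ fixes the cycle and records $g|_{S_T}$, from which the rotation of $g$ is recoverable). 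Injectivity yields $|\mathcal{C}(f)|\le|\mathcal{C}(w)|$ for every $f\in Z_m(S)$ in one stroke, the count $|\mathcal{C}(w)|=m-1+\sum_{S'\subseteq S_T}t^{|S_T\setminus S'|}=m-1+(t+1)^t$ matches your computation via Theorem \ref{34}, and failure of surjectivity of $\delta$ whenever $\Gamma(f)\not\simeq W_{m,t}$ gives part 2. To salvage your route you would need a comparable global injection for each lifting step, which is essentially the same kind of work the paper does once, directly against $W_{m,t}$; so either carry out that injection explicitly or adopt the direct comparison.
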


\begin{proof} 1. We show that for any $f\in Z_{m}(S)$ we have $|\mathcal{C}(f)|\le
|\mathcal{C}(w)|$, for some $w\in \mathcal{F}un(S)$ such that
$\Gamma(w)\simeq W_{m,t}$.

Let $S=S_{Z}\cup S_{T}$, where $S_{Z}=\{0,..,m-1\}$ is the only
$f$-cycle. Define $w\in \mathcal{F}un(S)$ such that $w(x)=f(x)$
for any $x\in S_{Z}$ and $w(y)=0$ for any $y\in S_{T}$. We see
that $\Gamma(w)\simeq W_{m,t}$.

We define a map $\delta:\mathcal{C}(f)\rightarrow \mathcal{C}(w)$
as follows. Let $g\in \mathcal{C}(f)$.

\subparagraph{Case $g(S_{T})\cap S_{T}=\emptyset$} Let
$(\delta(g))(x)=g(x)$ for any $x\in S_{Z}$ and
$(\delta(g))(y)\equiv g(0)-1(mod\ m)$ for any $y\in S_{T}$.
Informally, if $g$ sends all tree vertices to the cycle, then
$\delta(g)$ acts on the cycle as $g$ and acts on tree vertices in
such a way that $\delta(g)\in \mathcal{C}(f)$, all tree vartices
are mapped to the cycle.

\subparagraph{Case $g(S_{T})\cap S_{T}\neq\emptyset$} Let $(\delta
(g))(x)=x$ for any $x\in S_{Z}$ and $(\delta (g))(y)=g(y)$ for any
$y\in S_{T}$. Informally, if $g$ does not send all vertices to the
cycle then $\delta(g)$ fixes the cycle and maps such tree vertices
to tree vertices of height $1$ of $\Gamma(w)$. For any $x\in
S_{Z}$ we have $(\delta (g))(w(x))=w(x)=w(\delta(g))(x))$. For any
$y\in S_{T}$ we have $(\delta(g))(w(y))=(\delta(g))(0)=0$, on the
other hand $w((\delta(g))(y))=w(g(y))=0$. Thus $\delta(g)\in
\mathcal{C}(f)$.

The map $\delta$ is injective, because if $g(S_{T})\cap S_{T}\neq
\emptyset$, then the shift of the cycle and images of tree
vertices are uniquely determined. If $g(S_{T})\cap S_{T}=
\emptyset$, then the image of $\delta$ contains all shifts -
powers of $g$.

Injectivity of $\delta$ implies that $|\mathcal{C}(f)|\le
|\mathcal{C}(w)|$ and thus $\max\limits_{f\in
Z_{m}(S)}|\mathcal{C}(f)|=|\mathcal{C}(w)|$.

Let us compute $|\mathcal{C}(w)|$. There are $m-1$ nontrivial
(having nonidentity restriction on $S_{Z}$)
$\Gamma(w)$-endomorphisms $\widetilde{g}$ such that
$\widetilde{g}(S_{T})\cap S_{T}=\emptyset$. Now let us count the
number $N$ of $\Gamma(w)$-endomorphisms with identity restriction
on $S_{Z}$. Any such $S$-endofunction is determined by the pair
$[S',g']$, where $S'\subseteq S_{T}$ $g'\in
\mathcal{F}un(S_{T}\backslash S')$ ($S'$ are those vertices in
$S_{T}$, which are mapped to the cycle). By the sum and product
rule
$$N=\sum_{S'\subseteq S_{T}}|S_{T}|^{|S_{T}\backslash S'|}=\sum_{i=0}^{t}{t \choose
i}t^{i}=(t+1)^t.$$

Thus $|\mathcal{C}(w)|=m-1+N=m-1+(t+1)^t$.

2. We just determined that $\Gamma(f)\simeq W_{m,t}$ implies the
required size of $|\mathcal{C}(f)|$. We now show that if
$\Gamma(f)\not\simeq W_{m,t}$, then the map $\delta$ defined above
is not surjective.

If $\Gamma(f)$ has a directed tree with a vertex having height
bigger than $1$, then any permutation of its tree vertices that
increases height of a vertex cannot be obtained restricting a
$\Gamma(f)$-endomorphism.

Suppose all vertices of $\Gamma(f)$ have height at most $1$ and
there are at least two trees $T_{1}$ and $T_{2}$. If
$|\widetilde{T_{1}}|\ge 2$, then any $W_{m,p}$-endomorphism which
sends vertices of $T_{1}$ to two different trees can not be
obtained as an image of $\delta$. If any root-truncated tree has
one vertex and and there are at least three trees, then any
$W_{m,t}$-endomorphism which fixes one vertex and permutes a pair
of other vertices can not be obtained as an image of $\delta$. The
case then there are two vertices of height $1$ is proved by direct
computation. Thus if $\Gamma(f)\not\simeq W_{m,p}$, then $\delta$
is not surjective and in this case $|\mathcal{C}(f)|<m-1+(p+1)^p$.
\end{proof}

\begin{proposition}Let $S$ be a finite set, $|S|=n$. Let
$Z_{M}(S)$ be the set of $S$-endofunctions having cycles of
lengths belonging to the multiset $M=\{\{m_{1},...,m_{k}\}\}$,
$m_{i}\le m_{i+1}$ (one directed cycle of length $m_{i}$ for each
$i$). Let $t=n-\sum\limits_{i=1}^{k}m_{i}\ge 0$. Then

\begin{enumerate}

\item $\max\limits_{f\in
Z_{M}(S)}|\mathcal{C}(f)|=\Big(\prod\limits_{i=2}^{k}\sum\limits_{j:\
m_{j}|m_{i}}m_{j}\Big)\cdot \Big((t+1)^t-1+\sum\limits_{j:\
m_{j}=m_{1}}m_{j}\Big)$;

\item $|\mathcal{C}(f')|=\max\limits_{f\in
Z_{M}(S)}|\mathcal{C}(f)|$ iff $\Gamma(f')\simeq W_{m_{1},t}\cup
\Big( \bigcup\limits_{i=2}^{k}Z_{m_{i}}\Big)$.

\end{enumerate}

\end{proposition}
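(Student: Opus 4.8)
\emph{Strategy and reduction to a product.} The plan is to follow Proposition~\ref{44} one weakly connected component at a time, and then to settle the combinatorics of how the $t$ non-cyclic vertices should be distributed among components. Write $\Gamma(f)=\bigsqcup_{i=1}^{k}C_{i}$, where $C_{i}$ is the weakly connected component containing the $f$-cycle of length $m_{i}$; it is an $f$-pseudocycle of cycle length $m_{i}$ with some $t_{i}\ge 0$ non-cyclic vertices, $\sum_{i}t_{i}=t$. As recalled in Subsection~\ref{13}, a $\Gamma(f)$-endomorphism amounts to an independent choice of homomorphism $C_{i}\to\Gamma(f)$ for each $i$; and since a connected graph has connected image while the directed cycle of $C_{i}$ is mapped onto a directed cycle of length dividing $m_{i}$ (Theorem~\ref{9}), every homomorphism $C_{i}\to\Gamma(f)$ has image inside one single $C_{j}$ with $m_{j}\mid m_{i}$. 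Hence
$$|\mathcal{C}(f)|=\prod_{i=1}^{k}\ \sum_{j:\ m_{j}\mid m_{i}}|\mathcal{H}om(C_{i},C_{j})|,$$
where the individual counts are computed by Theorem~\ref{34}. Two bookkeeping facts suffice: (a) if $C_{j}$ is a pure cycle $Z_{m_{j}}$ with $m_{j}\mid m_{i}$ then $|\mathcal{H}om(C_{i},Z_{m_{j}})|=m_{j}$, since the cycle of $C_{i}$ maps onto $Z_{m_{j}}$ in $m_{j}$ ways and then every non-cyclic vertex of $C_{i}$, taken in order of increasing height, has a forced image; symmetrically $|\mathcal{H}om(Z_{m_{i}},C_{j})|=m_{j}$, the image of a directed cycle being a directed cycle which misses the directed trees of $C_{j}$; and (b) Proposition~\ref{44} gives $|\mathcal{E}nd(W_{m,r})|=m-1+(r+1)^{r}$, while the injection $\delta$ from the proof of Proposition~\ref{44}, applied to an arbitrary pseudocycle $P$ of cycle length $m$ with $r$ non-cyclic vertices, yields $|\mathcal{E}nd(P)|\le|\mathcal{E}nd(W_{m,r})|$ with equality only for $P\simeq W_{m,r}$.

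\emph{Two stages.} Stage one straightens the components: using (b) per component, together with the fact that enlarging a target $C_{j}$ by directed trees can only raise factors whose source $C_{i}$ already carries trees, one passes --- via an injection $\mathcal{C}(f)\hookrightarrow\mathcal{C}(f'')$ extending $\delta$ componentwise --- to a function $f''$ with $\Gamma(f'')\simeq\bigcup_{i}W_{m_{i},t_{i}}$, $\sum t_{i}=t$. For such $f''$ the product above becomes explicit; when $t_{1}=t$ and $t_{i}=0$ for $i\ge 2$ (so that the only component with trees has the smallest cycle and, $m_{1}$ being minimal, no other cycle length divides $m_{1}$) it equals $\bigl((t+1)^{t}-1+\sum_{j:\ m_{j}=m_{1}}m_{j}\bigr)\prod_{i=2}^{k}\sum_{j:\ m_{j}\mid m_{i}}m_{j}$, the asserted value. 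Stage two is the extremal step: among all distributions $(t_{1},\dots,t_{k})$ with $\sum t_{i}=t$ one must show that $t_{1}=t$ maximises the product, and then combine this with the equality cases of (b) --- a non-cyclic vertex of height $>1$, a second directed tree on a component, or a nontrivial cyclic self-isomorphism of a component would each yield an extra commuting function --- to pin down $\Gamma(f')\simeq W_{m_{1},t}\cup\bigcup_{i\ge 2}Z_{m_{i}}$ as the unique maximiser.

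\emph{Main obstacle.} Stage two is where the substance lies. Unlike the one-component situation of Proposition~\ref{44}, shifting a single non-cyclic vertex from a component $C_{b}$ to a component $C_{a}$ perturbs several factors of $\prod_{i}\sum_{j:\ m_{j}\mid m_{i}}|\mathcal{H}om(C_{i},C_{j})|$ at once: the source factor of $C_{a}$, the source factor of $C_{b}$, and every factor whose component admits $C_{a}$ or $C_{b}$ as a homomorphic target. I would isolate this comparison as a lemma, showing that such a shift rescales the product by a ratio whose sign is governed by the convexity of $x\mapsto(x+1)^{x}$ and by the divisibility pattern of $M$, and that iterating it moves all the mass onto $C_{1}$. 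Establishing this inequality, together with its exact equality case, is the crux; the component-wise bookkeeping surrounding it is routine and parallels the earlier propositions of Subsection~\ref{19}.
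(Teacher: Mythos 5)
Your proposal follows essentially the same route as the paper: it verifies the value of $|\mathcal{C}(f')|$ for $\Gamma(f')\simeq W_{m_{1},t}\cup\bigcup_{i\ge 2}Z_{m_{i}}$ from the earlier counting results (your componentwise product $\prod_{i}\sum_{j:\,m_{j}\mid m_{i}}|\mathcal{H}om(C_{i},C_{j})|$ is a cleaner bookkeeping of the paper's ``one factor changes'' computation based on Theorem \ref{42} and Proposition \ref{44}), and then argues that any other configuration is improved by first straightening each component to a $W$-shape and then moving all tree vertices to a minimal-length cycle. The decisive monotonicity-and-uniqueness step that you explicitly leave as an unproven lemma (``the crux'') is likewise not carried out in the paper, which invokes the same two moves and states ``All details are not given,'' so your sketch matches the paper's proof both in strategy and in its level of completeness.
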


\begin{proof}

1., 2. If $\Gamma(f)\simeq W_{m_{1},t}\cup \Big(
\bigcup\limits_{i=2}^{k}Z_{m_{i}}\Big)$, then the formula
$$|\mathcal{C}(f)|=\Big(\prod\limits_{i=2}^{k}\sum\limits_{j:\
m_{j}|m_{i}}m_{j}\Big)\cdot \Big((t+1)^t-1+\sum\limits_{j:\
m_{j}=m_{1}}m_{j}\Big)$$ is checked by direct computation as
follows. Let $f_{0}$ be the function with the same multiset of
cycles $M$ and $t=0$, then using statement 3 of \ref{42} we get

\begin{equation}\label{43}
|\mathcal{C}(f_{0})|=\Big(\prod\limits_{i=1}^{k}\sum\limits_{j:\
m_{j}|m_{i}}m_{j}\Big).
\end{equation}
 By adding $t$ tree vertices of height $1$
to one vertex of a minimal length cycle one factor of \ref{43} for
the minimal $m_{i}$, say, $i=1$, changes to
$\Big((t+1)^t-1+\sum\limits_{j:\ m_{j}=m_{1}}m_{j}\Big)$, see
\ref{44}.

It is left to prove, that $|\mathcal{C}(f)|$ is maximal iff all
tree vertices of $V(f)$ are attached with height $1$ to one vertex
of a cycle of the minimal length. Any pseudoforest on the vertex
set $S$ having cycles of lengths in $M$ and $t$ tree vertices is
tranformed into $W_{m_{1},t}\cup \Big(
\bigcup\limits_{i=2}^{k}Z_{m_{i}}\Big)$ by a sequence of following
moves which increase the number of graph endomorphisms: 1)
transform each tree into a tree of type $W_{m_{i},t'}$ for some
$t'$, 2) all tree vertices are moved to one cycle of minimal
length, thus getting $W_{m_{1},t}$ and other cycles. All details
are not given.
\end{proof}

\begin{example} In Fig.6 we see the functional graph having cycles
of length $2,2,4$, three tree vertices and $1072=4\cdot 4\cdot
(4^3-1+4)$ commuting functions which is the maximal number for
this cycle length set.

\begin{center}$$
\xymatrix{
\bullet\ar[dr]&\bullet\ar[d]&\bullet\ar[dl]&&&\bullet\ar[r]&\bullet\ar[d]&\\
& \bullet\ar\ar@/_/[r] & \bullet\ar\ar@/_/[l]&\bullet\ar\ar@/_/[r] & \bullet\ar\ar@/_/[l]&\bullet\ar[u]&\bullet\ar[l]&\\
 }
$$

Fig.6.  - the graph $W_{2,3}\cup 2 Z_{3}\cup Z_{4}$
\end{center}

\end{example}

\subsection{Generalizations to functions on infinite sets}\label{20}

Results for finite sets can be transferred to the case of infinite
sets, where there are additional types of weakly connected
components of functional graphs.

\begin{lemma}\label{21} Let $S$ be a set, $f$ and $g$ - commuting $S$-endofunctions: $f g=g
f$. Let $L\le \Gamma(f)$ be an $f$-line, let $R\le \Gamma(f)$ be
an $f$-ray.

Then

\begin{enumerate}

\item $g(L)$ is an $f$-line, an $f$-cycle or an $f$-cycle with an
infinite directed tree - infinite directed path.

\item $g(R)$ is an $f$-ray, an $f$-cycle or an $f$-cycle with a
finite directed path.

\end{enumerate}

\end{lemma}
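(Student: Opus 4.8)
The plan is to argue that the $g$-image of an $f$-line $L$ (resp.\ an $f$-ray $R$) is a weakly connected subgraph of $\Gamma(f)$ on which every vertex still has out-degree one, and then to classify such subgraphs using the structure theory of weakly connected components already recorded in Section~\ref{13}. The commutativity relation $fg=gf$ will be used throughout in its iterated form $g(f^{i}(x))=f^{i}(g(x))$ (valid for all $i\in\mathbb{N}$, and for all $i\in\mathbb{Z}$ when the relevant orbit is a line), exactly as in the proofs of Lemmas~\ref{1} and~\ref{2}.

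\textbf{Key steps.} First I would fix a vertex $x$ on $L$ so that $L=(\dots,f^{-1}(x),x,f(x),f^{2}(x),\dots)$, and observe that, since $g$ is a $\Gamma(f)$-endomorphism (Lemma, part 1), the edge $[f^{i}(x),f^{i+1}(x)]$ maps to the edge $[g(f^{i}(x)),g(f^{i+1}(x))]=[f^{i}(g(x)),f^{i+1}(g(x))]$. Hence $g(L)$ is precisely the forward $f$-orbit of $g(x)$ together with whatever the images of the backward vertices $f^{-i}(x)$ contribute; in particular $g(L)$ lies in the single weakly connected component $C$ of $\Gamma(f)$ containing $g(x)$. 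Now split into cases according to the type of $C$ as classified in Section~\ref{13}: if $g(x)$ sits on an $f$-cycle then the whole forward orbit is that cycle, and one checks that the backward vertices $f^{-i}(x)$ map into the tree hanging off that cycle vertex, forming at most an infinite directed path (since the $f^{-i}(x)$ form an infinite chain with $f(g(f^{-i}(x)))=g(f^{-i+1}(x))$); if instead the forward orbit of $g(x)$ is aperiodic, then $C$ must contain an $f$-line (it cannot be a finite pseudocycle or a pseudoray, the latter having no backward infinite chain through a point whose backward orbit is infinite — but here I need the backward chain $f^{-i}(x)$ to map somewhere consistent), and one shows $g(L)=C$ is an $f$-line. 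For $R=(x,f(x),f^{2}(x),\dots)$ the argument is the same but with no backward vertices: $g(R)$ is just the forward $f$-orbit of $g(x)$, which is either an $f$-ray (if aperiodic — forcing $C$ to be a pseudoray, as $R$ has no backward infinite chain so $C$ cannot contain a line forced by $R$ alone), an $f$-cycle (if $g(x)$ is eventually periodic and already on the cycle, or its tail reaches a cycle — then the image is the cycle plus a finite directed path), proving part~2.

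\textbf{Main obstacle.} The delicate point is ruling out or correctly accommodating the ``mixed'' possibilities — e.g.\ confirming that when $g(x)$ on an $f$-line has an \emph{aperiodic} forward orbit the image is genuinely a whole $f$-line rather than a proper sub-ray of it, which requires tracking the images of the backward vertices $f^{-i}(x)$ and using that their images $g(f^{-i}(x))$ form a backward-infinite $f$-chain from $g(x)$, so $g(x)$ itself has an infinite backward orbit in $C$ and hence $C$ is an $f$-line with $g(L)$ onto it; and symmetrically, checking that the ``$f$-cycle with an infinite directed path'' case for $L$ really occurs and is the only extra option (the backward chain maps to a strictly monotone-in-height chain in the tree, so it is a single infinite path, not a bushier tree). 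I expect the bookkeeping of heights — invoking the behaviour $\phi(g(y))=\phi(g(x))+1$ for $y\in f^{-1}(x)$ in the spirit of Lemma~\ref{10} — to be the part most prone to slips, but otherwise the proof is a routine case analysis and can reasonably be compressed to ``follows from commutativity of $f$ and $g$ together with the classification of weakly connected components,'' with the cases indicated.
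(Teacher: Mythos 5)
Your argument is essentially the paper's: you split according to whether $g$ is injective on $L$ (equivalently, whether some image vertex is $f$-periodic), use $g(f^{i}(x))=f^{i}(g(x))$ to show that in the non-injective case the forward images land on a finite $f$-cycle $Z$ while the remaining (backward) images form a single directed path feeding into $Z$ (infinite for $L$, finite for $R$), and in the injective case the image is a line, resp.\ a ray. The only blemishes are immaterial side claims about the ambient weakly connected component (e.g.\ that $g(L)=C$, or that an aperiodic image of $R$ forces $C$ to be a pseudoray --- $C$ could be a pseudoline), but the lemma only concerns the image subgraph, so these do not affect the conclusion.
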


\begin{proof} Assume that $L=(V_{L},E_{L})$, where $V_{L}=\bigsqcup\limits_{a\in
\mathbb{Z}}x_{a}$, $E_{V}=\bigsqcup\limits_{a\in
\mathbb{Z}}[x_{a},x_{a+1}]$, $R=(V_{R},E_{R})$, where
$V_{R}=\bigsqcup\limits_{a\in \mathbb{N}}x_{a}$,
$E_{R}=\bigsqcup\limits_{a\in \mathbb{N}}[x_{a},x_{a+1}]$.

If $g$ is injective on $L$ (or $R$), then $g(L)$ (or $g(R)$) is an
$f$-line (or $f$-ray).

If $g$ is not injective on $L$ (or $R$), then there are two $L$
(or $R$) vertices $x_{m}$ and $x_{m+a}$, $a>0$, such that
$g(x_{m+a})=g(x_{m})$. It follows, that
$g(x_{m})=g(f^{a}(x_{m}))=f^{a}(g(x_{m}))$ and $g(x_{p})$ for all
$p\ge m$ belong to a finite $f$-cycle $Z$. If for all $x\in
\mathcal{V}(L)$ (or $x\in \mathcal{V}(R)$) we have that $g(x)\in
\mathcal{V}(Z)$, then $g(L)=Z$ (or $g(R)=Z$).

If there is $n<m$ such that $g(x_{n})$ does not belong to
$\mathcal{V}(Z)$, then $g(L)$ (or $g(R)$) is $Z$ with an infinite
(or finite) directed path having its root in $\mathcal{V}(Z)$.
\end{proof}

\begin{proposition} Let $S$ be a set, $f$ and $g$ - commuting $S$-endofunctions:
$f g=g f$. Let $X\le \Gamma(f)$ be an $f$-line or $f$-ray.

Then $g(X)$ contains a directed $f$-cycle iff $g$ is not injective
on $\mathcal{V}(X)$.

\end{proposition}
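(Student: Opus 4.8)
The plan is to derive this statement as an essentially immediate corollary of Lemma~\ref{21}, by unpacking what it means for $g$ to be non-injective on a weakly connected component of the type in question. The proposition is a biconditional, so I would prove the two implications separately, though both directions are short.

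For the ``if'' direction, suppose $g$ is not injective on $\mathcal{V}(X)$, where $X$ is an $f$-line or an $f$-ray. Then I would invoke exactly the argument already carried out in the proof of Lemma~\ref{21}: non-injectivity produces two vertices $x_{m}$ and $x_{m+a}$ with $a>0$ and $g(x_{m+a})=g(x_{m})$, whence $g(x_{m})=f^{a}(g(x_{m}))$, so $g(x_{m})$ lies on a finite $f$-cycle $Z$. Hence $g(X)$ contains the directed $f$-cycle $Z$. (Here I should note that $Z$ is genuinely a directed cycle in $\Gamma(f)$, possibly a loop, consistent with the terminology of the excerpt.) For the ``only if'' direction, suppose $g(X)$ contains a directed $f$-cycle. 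If $g$ were injective on $\mathcal{V}(X)$, then by the first case of the proof of Lemma~\ref{21}, $g(X)$ would be an $f$-line (if $X$ is a line) or an $f$-ray (if $X$ is a ray); in either case $g(X)$ is a subgraph isomorphic to a line or a ray, which contains no directed cycle, contradicting the assumption. Therefore $g$ is not injective on $\mathcal{V}(X)$.

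An equivalent and perhaps cleaner packaging, which I might prefer, is to observe that Lemma~\ref{21} already classifies $g(X)$ into exactly those possibilities that contain an $f$-cycle and exactly one that does not: for $X$ a line, $g(X)$ is an $f$-line (no cycle), an $f$-cycle (has a cycle), or an $f$-cycle with an infinite directed path attached (has a cycle); for $X$ a ray, $g(X)$ is an $f$-ray (no cycle), an $f$-cycle (has a cycle), or an $f$-cycle with a finite directed path (has a cycle). The cycle-free case is precisely the injective case, as the proof of Lemma~\ref{21} shows, and the other cases are precisely the non-injective ones. So the biconditional falls out of matching up the trichotomy with the injectivity dichotomy.

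I do not anticipate a genuine obstacle here, since all the real work was done in Lemma~\ref{21}. The only point requiring a little care is making sure the ``only if'' direction is airtight: one must confirm that when $g$ restricted to $\mathcal{V}(X)$ is injective, the image $g(X)$ really is isomorphic (as a directed graph) to $L$ or $R$ and therefore acyclic — this is exactly what the first sentence of the proof of Lemma~\ref{21} asserts, so I would simply cite it. The one subtlety worth a remark is that injectivity of $g$ on $\mathcal{V}(X)$ is genuinely needed (rather than, say, injectivity of $g$ globally): $g$ could collapse vertices outside $X$ while remaining injective on $X$, and the proposition is a statement purely about the restriction, so I would phrase everything in terms of $g|_{\mathcal{V}(X)}$ throughout.
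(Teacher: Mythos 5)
Your proposal is correct and follows essentially the same route as the paper: both directions rest on the commutation identity $g(f^{k}(x))=f^{k}(g(x))$ applied along the line or ray, exactly the computations already present in the proof of Lemma~\ref{21}. The only cosmetic difference is that the paper proves the ``only if'' direction directly (a cycle in $g(X)$ gives $v\in\mathcal{V}(X)$ with $f^{k}(g(v))=g(v)$, hence $g(f^{k}(v))=g(v)$ with $f^{k}(v)\neq v$), whereas you argue the contrapositive via the injective case of Lemma~\ref{21}; the two are logically equivalent and equally sound.
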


\begin{proof} If there is $k\in \mathbb{N}$ and $x,y\in S$ such that $f^{k}(x)=y$
and $g(x)=g(y)$, then $g(f^{k+a}(x))=f^{a}(gf^{k}(x))=f^{a}(g(y))$
for all $a\in \mathbb{N}$. It follows that the induced
$f$-subgraph having vertex set $\bigcup\limits_{k\ge 0}f^{k}(x)$
is a directed $f$-cycle.

If $g(X)$ contains an $f$-cycle $Z$ as a subgraph, then there is
$v\in \mathcal{V}(X)$ such that $f^{k}(g(v))=g(v)$ for some $k\in
\mathbb{N}$. It follows that $g(v)=g(f^{k}(v))$, hence $g$ is not
injective on $X$.
\end{proof}

\begin{lemma}\label{22} Let $S$ be a set, $f$ and $g$ - commuting $S$-endofunctions: $f g=g
f$. Let $P$ be an $f$-pseudoline (or $f$-pseudoray) with a
directed line (ray) $L$ (or $R$). $T$ is a directed tree of $P$
with the root $z\in \mathcal{V}(L)$ (or $z\in \mathcal{V}(R)$).

Then there is $\mathcal{A}\subseteq \mathcal{V}(T\backslash
\{z\})$ such that

\begin{enumerate}

\item[1)] there are no distinct elements $x,y\in \mathcal{A}$,
such that $x<y$ in the tree order;

\item[2)] $\phi(g(x))=1$ iff $x\in \mathcal{A}$;

\item[3)] for any $x\in \mathcal{A}$ if $x\le y$ then $g(x)$ and
$g(y)$ are in the same directed tree.
\end{enumerate}

\end{lemma}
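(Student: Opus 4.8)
The statement of Lemma~\ref{22} is the exact analogue of Lemma~\ref{4}, with the finite directed cycle $\mathcal{Z}(P)$ replaced by an infinite directed line $L$ (or ray $R$), and the tree $T$ now possibly infinite. The plan is therefore to reproduce the argument of Lemma~\ref{4}, checking that nothing essential used finiteness of $T$, except that the iterative construction over the distance levels $\mathcal{D}_i(T)$ must be replaced by a construction that works for trees of arbitrary (possibly infinite) depth. First I would simply \emph{define} $\mathcal{A} = \{x \in \mathcal{V}(T \setminus \{z\}) : \phi(g(x)) = 1\}$, exactly as in Lemma~\ref{4}; this makes property~2) true by fiat. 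The substance is then in proving properties~1) and 3), and for that the key tool is Lemma~\ref{10}, which I may invoke since it only assumes $S$ is finite—wait, it does assume $S$ finite, so strictly the relevant height-propagation facts must be reestablished, but they follow purely from commutativity of $f$ and $g$ (the same one-line computation $g(f(y)) = f(g(y))$ that Lemma~\ref{10} rests on), so I would restate that computation directly here rather than cite Lemma~\ref{10}.

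For property~3): take $x \in \mathcal{A}$ and $y \ge x$ in the tree order, so there is a directed path $y = y_r \to y_{r-1} \to \cdots \to y_0 = x$ in $T$, i.e. $f^r(y) = x$. Since $fg = gf$ we get $g(y_{k-1}) = f(g(y_k))$ for each step, hence $g(y)$ reaches $g(x)$ by following $r$ directed edges of $\Gamma(f)$, so $g(y)$ and $g(x)$ lie in the same weakly connected component of $\Gamma(f)$, namely the same directed tree of $g(P)$ hanging off $L$ (or $R$), and $g(x) \le g(y)$ in that tree's order. For property~1): suppose $x, y \in \mathcal{A}$ are distinct with $x < y$. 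Then by the computation just given $g(y)$ has height $\phi(g(x)) + (\text{path length from }y\text{ to }x) \ge 1 + 1 = 2$, contradicting $\phi(g(y)) = 1$. This is where the height-increment fact ($\phi(g(x)) = i > 0 \Rightarrow \phi(g(f^{-1}(x))) = i+1$, the analogue of part~2 of Lemma~\ref{10}) is used; I would prove it in one line from $g \circ f = f \circ g$ together with the fact that in a functional graph each vertex has a unique out-edge, so heights are well-defined and strictly increase as one moves backward along an in-edge.

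The only genuinely new point compared to Lemma~\ref{4} is that the inductive ``continue until the maximal $k$ with $\mathcal{D}_k(T) \ne \emptyset$'' no longer terminates when $T$ is infinite; I would remark that the closed-form definition of $\mathcal{A}$ sidesteps this, so no transfinite induction is needed—every vertex $x$ is simply tested against the single condition $\phi(g(x)) = 1$, and properties~1) and 3) are consequences of the local commutativity relation that hold vertex-by-vertex regardless of the size of $T$. I would also note that for the pseudoray case the trees are finite anyway (as recalled in the excerpt), so the statement there is literally Lemma~\ref{4}; the pseudoline case is the one requiring the rephrasing. I expect the main obstacle to be purely expository: making sure the height function $\phi$ is still well-defined on a possibly infinite directed tree (it is, since each non-root vertex still has a unique finite directed path to the root), and being careful that $g$ need not be injective, so that $g(P)$'s ``directed tree of $g(P)$'' in property~3) is correctly identified as the tree rooted at the image of the relevant line/ray vertex $g(z')$ where $z' \in \mathcal{V}(L)$ (or $\mathcal{V}(R)$) is the cycle-side endpoint of the path $g$ produces.
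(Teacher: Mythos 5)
Your proposal follows exactly the route the paper intends: its proof of Lemma~\ref{22} is simply ``Similar to Lemma~\ref{4}'', and you reproduce that argument, defining $\mathcal{A}$ by the condition $\phi(g(x))=1$ and deriving properties 1) and 3) from the commutativity relation, while correctly noting that the height-propagation facts of Lemma~\ref{10} must be re-proved for arbitrary $S$ (they do follow from the same one-line computation) and that the closed-form definition of $\mathcal{A}$ avoids the level-by-level induction when the tree is infinite. This is a faithful, slightly more careful rendering of the paper's own argument, so no further comment is needed.
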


\begin{proof} Similar to Lemma \ref{4}.
\end{proof}

\begin{theorem} Let $S$ be a set, $f$ and $g$ - commuting $S$-endofunctions:
$fg=gf$.

Then

\begin{enumerate}

\item the $g$-image of an $f$-pseudocycle of cycle length $m$ is
an $f$-pseudocycle of cycle length $l$, where $l|m$;

\item if $P$ is an $f$-pseudocycle with directed cycle
$Z=(z_{0},...,z_{m-1})$ and directed tree cycle
$T=(T_{0},...,T_{m-1})$, then the restriction
$g|_{\mathcal{V}(P)}$ is bijectively defined by the triple
$\tau=[g|_{Z}, [\mathcal{A}], [\Phi]]$, where

\begin{enumerate}



\item $[\mathcal{A}]=[\mathcal{A}_{0},...,\mathcal{A}_{m-1}]$,
where $\mathcal{A}_{i}\subseteq \mathcal{V}(T_{i}\backslash
z_{i})$ such that for any $a\in \mathcal{V}(P)$ we have that
$\phi(g(a))=1$ iff $a\in \mathcal{A}_{i}$ for some $i$ ($\phi$ is
meant with respect to $g(Z)$);

\item $[\Phi]=[\Phi_{0},...,\Phi_{m-1}]$, where
$\Phi_{i}=[\varphi_{i,x}]_{x\in \mathcal{A}_{i}}$, where

$\varphi_{i,x}\in \mathcal{H}om(\mathcal{T}(x),
\mathcal{T}(g(x)))$.

\end{enumerate}

\end{enumerate}
\end{theorem}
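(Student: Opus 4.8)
The plan is to establish the theorem by reducing each claim to the corresponding finite-set result already proven in subsection~\ref{19}, patching in the two new ingredients provided by Lemma~\ref{21} and Lemma~\ref{22}. For statement~1, I would argue exactly as in Theorem~\ref{9}: an $f$-pseudocycle $P$ has a unique directed cycle $Z$, and since $g$ is a $\Gamma(f)$-endomorphism, $g(Z)$ lies in some weakly connected component $P'$ of $\Gamma(f)$; because $Z$ is a finite directed cycle, Lemma~\ref{2} (homomorphic images of finite cycles are finite cycles of dividing length) forces $g(Z)$ to be an $f$-cycle of length $l$ with $l\mid m$, and this cycle must be $\mathcal{Z}(P')$ since a pseudocycle has only one directed cycle; every directed tree $\mathcal{T}(z_i)$ is then mapped into the directed tree rooted at $g(z_i)$, so $P'$ is itself an $f$-pseudocycle of cycle length $l$. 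Note that, unlike in the infinite-domain Lemma~\ref{21}, here the source component is a genuine pseudocycle (it contains a finite cycle, not a line or ray), so its image cannot acquire a line or ray — the finite-cycle case is clean.

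For statement~2, I would follow the proof of Theorem~\ref{11} verbatim, since $P$ has exactly the same structure as in the finite case (a finite cycle with rooted directed trees attached), the only difference being that the trees $T_i$ may now be infinite. The decomposition $g|_{\mathcal{V}(P)} = [g|_Z, [\mathcal{A}], [\Phi]]$ is read off in three stages: first, $g|_Z$ determines the image cycle $g(Z)$ and is itself pinned down by the image of one vertex (statement~1 together with Lemma~\ref{2}); second, for each $i$ one applies Lemma~\ref{22} to the directed tree $T_i$ with root $z_i$ to extract the incomparable vertex subset $\mathcal{A}_i\subseteq \mathcal{V}(T_i\setminus z_i)$ consisting precisely of the vertices whose $g$-image has height $1$ with respect to $g(Z)$; third, by parts~2) and 3) of Lemma~\ref{22} the behaviour of $g$ on $T_i$ below $\mathcal{A}_i$ is completely governed by the family of tree homomorphisms $\varphi_{i,x}\in\mathcal{H}om(\mathcal{T}(x),\mathcal{T}(g(x)))$ for $x\in\mathcal{A}_i$, and vertices above $\mathcal{A}_i$ (those $y$ with some $x\in\mathcal{A}_i$, $x<y$) are forced by $g(f^h(x))=f^h(g(x))$. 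Conversely, any triple satisfying the stated constraints patches together — using that the trees $\mathcal{T}(x)$ for $x\in\mathcal{A}_i$ are pairwise disjoint by incomparability — to a well-defined $g|_{\mathcal{V}(P)}$ commuting with $f$ on $P$; this is the routine ``bijectivity'' verification.

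The one genuinely new point relative to the finite case, and the step I expect to require the most care, is that the directed trees $T_i$ may be infinite, so Lemma~\ref{22} must be invoked rather than Lemma~\ref{4}, and one should check that the set $\mathcal{A}_i$ of ``drop points'' is still well-defined even when $T_i$ has vertices of unbounded height: the iterative construction by heights $[\mathcal{D}_1(T_i),\mathcal{D}_2(T_i),\ldots]$ used in Lemma~\ref{4} still terminates at each finite level and the union $\mathcal{A}_i=\bigsqcup_{h\ge 1}\mathcal{A}_i(h)$ makes sense as a (possibly infinite) incomparable set, which is exactly what Lemma~\ref{22} asserts. I would also remark briefly that, since $g(Z)$ is a finite $f$-cycle, $\phi$ in part~(a) is well-defined on all of $\mathcal{V}(P)$ — every tree vertex is at finite distance from its root, which lies on a finite cycle — so no subtlety about ``height $\infty$'' arises. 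With these observations in place the proof is a direct transcription of Theorems~\ref{9} and \ref{11}.

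\begin{proof}
We reduce to the finite-domain results of subsection~\ref{19}.

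\emph{Statement 1.} Let $P$ be an $f$-pseudocycle with directed cycle $Z=(z_0,\ldots,z_{m-1})$. Since $g$ is a $\Gamma(f)$-endomorphism and $Z$ is a finite directed cycle, Lemma~\ref{2} gives that $g(Z)$ is an $f$-cycle of length $l$ with $l\mid m$, say $g(Z)=(z'_0,\ldots,z'_{l-1})$. Let $P'$ be the weakly connected component of $\Gamma(f)$ containing $g(Z)$; as $P'$ contains a finite directed cycle it is an $f$-pseudocycle, and since a pseudocycle has a unique directed cycle we have $\mathcal{Z}(P')=g(Z)$. Each directed tree $\mathcal{T}(z_i)$ of $P$ is mapped by $g$ into the directed tree of $P'$ rooted at $g(z_i)$, because $[v,w]\in\mathcal{E}(\Gamma(f))$ implies $[g(v),g(w)]\in\mathcal{E}(\Gamma(f))$. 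Hence $g(P)\le P'$ and $P'$ has cycle length $l$ with $l\mid m$, proving 1.

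\emph{Statement 2.} We show that $g|_{\mathcal{V}(P)}$ is bijectively encoded by $\tau=[g|_Z,[\mathcal{A}],[\Phi]]$.

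First, by 1. and Lemma~\ref{2} the restriction $g|_Z$ is determined by $g(z_0)$ and maps $Z$ onto $g(Z)$, an $f$-cycle of length $l\mid m$. Since $g(Z)$ is a \emph{finite} cycle, every vertex $a\in\mathcal{V}(P)$ lies at finite distance from the root of its directed tree, which is a vertex of $g(Z)$ after applying $g$; thus $\phi(g(a))$ (height with respect to $g(Z)$) is well-defined and finite for all $a\in\mathcal{V}(P)$.

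Next, fix $i$ and apply Lemma~\ref{22} to the directed tree $T_i$ with root $z_i$: this yields $\mathcal{A}_i\subseteq\mathcal{V}(T_i\setminus z_i)$, an incomparable vertex subset, characterized by $\phi(g(x))=1\iff x\in\mathcal{A}_i$, and such that for $x\in\mathcal{A}_i$ and $y\ge x$ the images $g(x),g(y)$ lie in the same directed tree of $g(P)$ with $g(x)\le g(y)$. (The iterative construction by heights $[\mathcal{D}_1(T_i),\mathcal{D}_2(T_i),\ldots]$ terminates at each finite level, and $\mathcal{A}_i=\bigsqcup_{h\ge 1}\mathcal{A}_i(h)$, possibly infinite.) Setting $[\mathcal{A}]=[\mathcal{A}_0,\ldots,\mathcal{A}_{m-1}]$ we obtain the stated sequence.

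For the remaining data, for each $i$ and each $x\in\mathcal{A}_i$ the subtree $\mathcal{T}(x)$ is mapped by $g$ into $\mathcal{T}(g(x))$; let $\varphi_{i,x}=g|_{\mathcal{T}(x)}\in\mathcal{H}om(\mathcal{T}(x),\mathcal{T}(g(x)))$ and put $\Phi_i=[\varphi_{i,x}]_{x\in\mathcal{A}_i}$, $[\Phi]=[\Phi_0,\ldots,\Phi_{m-1}]$. This determines $g$ on $\bigcup_{x\in\mathcal{A}_i}\mathcal{V}(\mathcal{T}(x))$; the remaining vertices of $T_i$ are the roots $z_i$ (handled by $g|_Z$) and the vertices $y$ with $\phi(g(y))>1$, i.e.\ $y\ge x$ for some $x\in\mathcal{A}_i$, whose images are forced by $g(f^h(x))=f^h(g(x))$.

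Conversely, given a triple $[g|_Z,[\mathcal{A}],[\Phi]]$ with $g|_Z$ mapping $Z$ onto an $f$-cycle of dividing length, each $\mathcal{A}_i$ incomparable in $T_i\setminus z_i$, and each $\varphi_{i,x}\in\mathcal{H}om(\mathcal{T}(x),\mathcal{T}(g(x)))$, one reconstructs $g$ on $\mathcal{V}(P)$: on $Z$ it is $g|_Z$; on $\mathcal{T}(x)$ for $x\in\mathcal{A}_i$ it is $\varphi_{i,x}$; and on vertices $y$ strictly above some $x\in\mathcal{A}_i$ it is determined by the commutativity relation. Incomparability of $\mathcal{A}_i$ guarantees the subtrees $\mathcal{T}(x)$ are pairwise disjoint, so $g$ is well-defined, and one checks directly that the reconstructed $g$ satisfies $fg=gf$ on $P$. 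The two assignments are mutually inverse, so the correspondence $g|_{\mathcal{V}(P)}\leftrightarrow\tau$ is a bijection.
\end{proof}
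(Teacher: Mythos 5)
Your proposal is correct and follows essentially the same route as the paper, whose entire proof of this theorem is the remark that it is similar to Theorem \ref{11}: you transcribe the finite-case argument (Theorem \ref{9} for the cycle, the drop-set construction for the trees), adding the right observations that the trees may now be infinite and that $\phi$, taken with respect to the finite cycle $g(Z)$, is still well defined. Two small points to repair rather than genuine obstacles: (i) for a pseudocycle with possibly infinite trees the lemma you really need is the argument of Lemma \ref{4} (Lemma \ref{22} as stated puts the root on a line or ray, though its proof is the same); (ii) the vertices you call ``remaining'' with $\phi(g(y))>1$, i.e.\ $y\ge x$ for some $x\in\mathcal{A}_i$, already lie in $\mathcal{T}(x)$ and are covered by $\varphi_{i,x}$ --- the genuinely remaining tree vertices are those with $\phi(g(y))=0$, whose images are forced onto $g(Z)$ via $g(z_i)=g(f^{h}(y))=f^{h}(g(y))$, so in the reconstruction direction you should say explicitly that every vertex of $T_i$ not lying above some element of $\mathcal{A}_i$ is sent to the unique cycle vertex determined by its height and $g|_Z$; as literally written, your recipe leaves those vertices unassigned.
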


\begin{proof} Similar to Theorem \ref{11}.
\end{proof}

\begin{theorem} Let $S$ be a set, $f$ and $g$ - commuting $S$-endofunctions:
$fg=gf$. Let $P$ be an $f$-pseudoline (or $f$-pseudoray) with a
directed line $Z=(...,z_{0},z_{1},...)$ (or a directed ray
$Z=(z_{1},...)$), define $T_{i}=\mathcal{T}(z_{i})$.

Then

\begin{enumerate}

\item if $g|_{Z}$ is injective, then $g|_{P}$ is bijectively
defined by the sequence $$\tau=[g|_{Z}, [\mathcal{A}], [\Phi]],$$
where

\begin{enumerate}


\item $[\mathcal{A}]$ is the sequence $[\mathcal{A}_{i}]_{i\in
\mathbb{Z}}$ (or $[\mathcal{A}_{i}]_{i\in \mathbb{N}}$), where
$\mathcal{A}_{i}\subseteq \mathcal{V}(T_{i}\backslash z_{i})$ such
that for any $a\in \mathcal{V}(P)$ we have that $\phi(g(a))=1$ iff
$a\in \mathcal{A}_{i}$ ($h$ is meant with respect to $g(Z)$);

\item $[\Phi]$ is the sequence $[\Phi_{i}]_{i\in \mathbb{Z}}$ (or
$[\Phi_{i}]_{i\in \mathbb{N}}$), where
$\Phi_{i}=[\varphi_{i,x}]_{x\in \mathcal{A}_{i}}$ with
$\varphi_{i,x}\in \mathcal{H}om(\mathcal{T}(x),
\mathcal{T}(g(x)))$.

\end{enumerate}

\item If $g|_{Z}$ is not injective and $g(z_{p})=g(z_{q})$ with
$p<q$, then $g|_{P}$ is bijectively defined by the sequence
$\tau=[g|_{Z_{p,q}}, \mathcal{A}, [\Phi]]$, where

\begin{enumerate}

\item denote by $Z_{p,q}$ the induced subgraph of $Z$ with the
vertex set $\{z_{p},...,z_{q}\}$;

\item $[\mathcal{A}]=[\mathcal{A}_{i}]_{i\ge p}$, where
$\mathcal{A}_{i}\subseteq \mathcal{V}(\mathcal{T}(z_{i})\backslash
z_{i})$ such that $\phi(g(a))=1$ iff $a\in \mathcal{A}_{i}$
($\phi$ is meant with respect to $g(Z)$);

\item $[\Phi]=[\Phi_{i}]_{i\ge p}\Phi_{i}$ with
$\Phi_{i}=[\varphi_{i,x}]_{x\in \mathcal{A}_{i}}$, where
$\varphi_{i,x}\in \mathcal{H}om(\mathcal{T}(x),
\mathcal{T}(g(x)))$.

\end{enumerate}

\end{enumerate}
\end{theorem}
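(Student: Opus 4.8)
The plan is to follow the pattern of Theorem~\ref{11}, with the directed line (resp.\ directed ray) $Z$ of the $f$-pseudoline (resp.\ $f$-pseudoray) $P$ playing the role that the $f$-cycle plays there, and with Lemma~\ref{21} and Lemma~\ref{22} substituted for their finite analogues. First I would pin down the behaviour of $g$ on the spine $Z$ using Lemma~\ref{21}. If $g|_{Z}$ is injective, then $g(Z)$ is again an $f$-line or $f$-ray and $g$ carries $Z$ isomorphically onto it, so $g|_{Z}$ records all the spine data. If $g|_{Z}$ is not injective, say $g(z_{p})=g(z_{q})$ with $p<q$, then, as in the proof of Lemma~\ref{21} and of the proposition following it, commutativity gives $g(z_{i+1})=g(f(z_{i}))=f(g(z_{i}))$, so every $g(z_{i})$ with $i\ge p$ lies on one fixed finite $f$-cycle and is already determined by the finite string $g|_{Z_{p,q}}$, while on the portion of $Z$ before $z_{p}$ the map $g$ is still injective and is described exactly as in the injective case.

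Next I would treat, index by index, the action of $g$ on the full directed tree $T_{i}=\mathcal{T}(z_{i})$, copying the finite pseudocycle argument. Since $g$ is a $\Gamma(f)$-endomorphism commuting with $f$, every vertex of $T_{i}$ is sent to a vertex lying over $g(z_{i})$ in the weakly connected component of $g(z_{i})$, and Lemma~\ref{22} yields the incomparable vertex subset $\mathcal{A}_{i}\subseteq\mathcal{V}(T_{i}\backslash\{z_{i}\})$ consisting of exactly those $x$ with $\phi(g(x))=1$ (height with respect to $g(Z)$): a vertex of $T_{i}$ not lying over any element of $\mathcal{A}_{i}$ is forced onto $g(Z)$ by walking the spine backwards, while for each $x\in\mathcal{A}_{i}$ the whole subtree $\mathcal{T}(x)$ is mapped homomorphically to $\mathcal{T}(g(x))$ by some $\varphi_{i,x}\in\mathcal{H}om(\mathcal{T}(x),\mathcal{T}(g(x)))$. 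Conversely, from the triple $\tau$ one rebuilds $g$ vertex by vertex: on $Z$ it is prescribed (and, in the non-injective case, extended forwards from $g|_{Z_{p,q}}$ by $g(z_{i+1})=f(g(z_{i}))$), on the vertices not over $\mathcal{A}_{i}$ it is the forced walk along $g(Z)$, and on each $\mathcal{T}(x)$ with $x\in\mathcal{A}_{i}$ it equals $\varphi_{i,x}$; the identity $fg=gf$ is then checked edge by edge, the only checks that are not immediate being those at the vertices of $\mathcal{A}_{i}$ and on $Z$, where they follow from the homomorphism property of the $\varphi_{i,x}$ together with Lemmas~\ref{21} and~\ref{22}. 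Since the $T_{i}$ meet only in the already fixed spine, the choices of $\mathcal{A}_{i}$, of $\Phi_{i}$, and of $\varphi_{i,x}$ inside $\Phi_{i}$ are mutually independent, so $g\leftrightarrow\tau$ is a bijection; that at least one commuting $g$ exists for a prescribed spine image follows, for instance, from taking a suitable power of $f$.

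The one genuinely new ingredient, compared with Theorem~\ref{11}, is the infinite book-keeping, and I expect this to be where the care is needed. In part~1 the sequences $[\mathcal{A}]$ and $[\Phi]$ are indexed by $\mathbb{Z}$ (pseudoline) or $\mathbb{N}$ (pseudoray), the trees $T_{i}$ may themselves be infinite in the pseudoline case, and one must check that an arbitrary choice of data over this infinite index set assembles into a bona fide $S$-endofunction with no hidden compatibility obstruction --- which holds precisely because distinct trees $T_{i}$ overlap only in the fixed spine $Z$, and an infinite ascending chain inside some $T_{i}$ simply maps onto the backward part of $g(Z)$. In part~2 the subtlety is that once the spine image merges into a finite $f$-cycle at $z_{p}$, the images $g(z_{i})$ for $i<p$ run backwards through $f$-preimages and may leave the cycle along an infinite directed path, exactly the option permitted by Lemma~\ref{21}; the forward indices $i\ge p$ are then handled just as for a pseudoray. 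This split of the spine into its pre-merge (injective) and post-merge (cyclic) parts, together with the independence of the tree contributions in each regime, is the genuine difference from the finite case --- everything else is the proof of Theorem~\ref{11} repeated verbatim.
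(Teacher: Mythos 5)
Your proposal is correct and follows essentially the same route as the paper: the paper's own proof is just the remark that the argument of Theorem~\ref{11} carries over with Lemma~\ref{21} controlling the image of the spine $Z$ and Lemma~\ref{22} supplying the incomparable vertex subsets, which is exactly the structure you elaborate (including the independence of the tree restrictions and the split of the non-injective case at the merge index $p$). If anything, you are more explicit than the paper about the pre-merge indices $i<p$ and the infinite bookkeeping, which the paper's one-line proof leaves implicit.
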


\begin{proof}1. If the injectivity condition holds for
$g(Z)$, then for each $z\in Z$ the restriction
$g|_{\mathcal{T}(z)}$ is determined by $\mathcal{A}\subseteq
\mathcal{V}(\mathcal{T}(z)\backslash z)$ containing vertices $a$
such that $\phi(g(a))=1$ and homomorphisms in
$\mathcal{H}om(\mathcal{T}(a),\mathcal{T}(g(a)))$ mapping the
remaining subtrees $\mathcal{T}(a)$ for each such $a$.

2. If the injectivity condition does not hold for $g(Z)$, then the
cyclic part of $g(Z)$ is determined by $g|_{Z_{p,q}}$, for each
$z_{n}\in Z$ with $n\ge p$ the restriction
$g|_{\mathcal{T}(z_{n})}$ is determined as in 1.

Use Lemma \ref{21} and Lemma \ref{22}.
\end{proof}

\subsection{Conclusion}

We have described endofunctions $g$ commuting with a given
endofunction $f$. Descriptions are given in terms of their
functional graphs, as homomorphisms of $f$-graphs, for $4$
subcases: 1) permutations commuting with a permutation, in this
case weakly connected components of $(f,g)$-graphs can be
interpreted as $g$-cycles, which permute $f$-cycles; 2)
permutations commuting with an arbitrary function, in this case
weakly connected components of $(f,g)$-graphs can be interpreted
as $g$-cycles, which permute $f$-pseudocycles sending directed
trees to isomorphic directed trees; 3) arbitrary functions
commuting with a permutation, in this case $(f,g)$-graphs can be
interpeted as $g$-pseudoforests with vertices being $f$-cycles; 4)
arbitrary functions commuting with an arbitrary function, this is
the most complex case: restrictions on $f$-cycles behave as in
case 3) and directed trees may be either mapped to cycles or leave
cycles and get mapped to directed trees. Results for finite sets
can be relatively straitforwardly generalized for arbitrary sets.
Future research may be stimulated by questions related to 1)
interpretation of graph-theoretic results in terms of functions,
matrices, operators etc., 2) enumerative and extremal
combinatorics, e.g. simplifications of the graph homomorphism
counting formula, Theorem \ref{34}, 3) graph structure of
functions satisfying other relations and 4) generalization of
these results to multivalued functions (mappings).

\end{document}